\documentclass[1996/10/24]{amsart}

\usepackage{color}

\allowdisplaybreaks[3]

\usepackage{amsfonts}
\usepackage{epsf,latexsym,amsfonts,amsbsy,mathrsfs}
\usepackage{amsmath, amssymb, amsthm,amscd,amsxtra}
\usepackage[]{graphicx,subfigure}
\usepackage{epstopdf}
\usepackage{float}
\usepackage{multirow}
\usepackage{bbm}
\usepackage{stmaryrd}
\usepackage{epsfig}
\usepackage{tikz}
\usepackage{dsfont}
\usetikzlibrary{positioning} 
\usepackage{xcolor}

\usepackage{amssymb,stmaryrd,color}
\usepackage{lipsum}
\usepackage{amsfonts}
\usepackage{graphicx}
\usepackage{epstopdf}
\usepackage{algorithmic}
\usepackage{bm}
\usepackage{multirow}

\ifpdf
  \DeclareGraphicsExtensions{.eps,.pdf,.png,.jpg}
\else
  \DeclareGraphicsExtensions{.eps}
\fi

\newtheorem{theorem}{Theorem}[section]
\newtheorem{lemma}[theorem]{Lemma}

\theoremstyle{definition}

\theoremstyle{remark}
\newtheorem{remark}[theorem]{Remark}

\theoremstyle{corollary}
\newtheorem{corollary}[theorem]{Corollary}

\numberwithin{equation}{section}

\newcommand{\ignore}[1]{}

\begin{document}


\title[$W^{1,p}$ error estimates for the Poisson problem]{$W^{1,p}$ error estimates of an IPDG and a class of HDG methods for the Poisson problem on convex polyhedral domains}


\author{Haitao Leng}
\address{School of Mathematics and Information Science, Guangzhou University, Guangzhou 510006, Guangdong, China}
\email{htleng@m.scnu.edu.cn}

\author{Weifeng Qiu}
\address{Department of Mathematics, City University of Hong Kong, Hong Kong, China}
\email{weifeqiu@cityu.edu.hk}

\thanks{The second author is corresponding author.}

\subjclass[2010]{65N15, 65N30, 76M10}

\date{}

\begin{abstract}
This paper analyzes a symmetric interior penalty discontinuous Galerkin (IPDG) method, a hybrid mixed DG method, and two representative hybridizable discontinuous Galerkin (HDG) methods for the Poisson problem on convex polyhedral domains. For any fixed $\epsilon\in (0,\frac{1}{2})$, we first derive optimal discrete $W^{1,p}$ error estimates for $2\leq p<\frac{1}{\epsilon}$ by combining the interface Green's functions developed in \cite{lq2026} with the Fubini-type weighted estimates established in this paper. These error estimates lead to discrete $W^{1,q}$-$W^{1,p}$ inf-sup stability for $2\leq q<\frac{1}{\epsilon}$, where $q$ is the conjugate number of $p$ such that $\frac{1}{p}+\frac{1}{q}=1$. Using this stability result and a suitably constructed dual problem, we extend the optimal error estimates to $\frac{1}{1-\epsilon}<p<2$ and subsequently establish the corresponding inf-sup stability for $\frac{1}{1-\epsilon}< q<2$. Thus, both optimal discrete $W^{1,p}$ error estimates and discrete $W^{1,q}$-$W^{1,p}$ inf-sup stability hold throughout the range $\frac{1}{1-\epsilon}<p<\frac{1}{\epsilon}$, with $q=p/(p-1)$. Note that all of the aforementioned results are free of logarithmic factors and also hold on polyhedral meshes (except for the hybrid mixed DG method). Compared with the conforming finite element method (FEM), establishing such error estimates and inf-sup stability results for DG methods on convex polyhedral domains is more delicate because their analysis involves mesh-dependent norms. Consequently, the operator interpolation theory employed for conforming FEMs cannot be directly applied to DG methods. Finally, a numerical example is presented to illustrate the theoretical results.
\end{abstract}
\keywords{$W^{1,q}$-$W^{1,p}$ inf-sup stability, $W^{1,p}$ error estimates, symmetric IPDG methods, HDG methods, Poisson problem, convex polyhedral domains}

\maketitle

\section{Introduction}
In the numerical analysis of partial differential equations (PDEs), the $H^1$ norm provides a natural framework for energy-based
stability and error estimates, with trial and test functions typically chosen from the same finite-dimensional space. This framework, however, may be insufficient for problems with limited solution regularity, since the solution may belong to $W^{1,p}$ for some $p<2$ without belonging to $H^1$ \cite{c1985, lc2022, loq2021}. In addition,
$W^{1,p}$ stability and error estimates play an important role in establishing weak discrete maximum principles \cite{ll2020, s1980}, analyzing the convergence of numerical methods for parabolic problems \cite{mss2005}, and deriving optimal $W^{2,p}$ error estimates for elliptic equations in non-divergence form \cite{fns2018, nsz2017}. These considerations motivate us to conduct the present study of optimal $W^{1,p}$ error estimates.

For the conforming FEM applied to second-order elliptic equations, optimal $W^{1,p}$ error estimates \cite{rs1982} for $1<p<\infty$ can be obtained by combining $H^1$ and $W^{1,\infty}$ error estimates \cite{dlsw2012, drs2024, glrs2009} with operator interpolation theory \cite{bs2008} and duality arguments. For DG methods, however, establishing such error estimates is more delicate because their analysis involves mesh-dependent norms, and consequently, the operator interpolation theory used for the conforming FEM cannot be directly extended to DG methods.

Over the last two decades, DG methods have become one of the most successful classes of finite element methods owing to their flexibility on general meshes, local conservation, suitability for $hp$-adaptivity, and excellent parallel scalability; see, for example, \cite{c2002,de2011}. Numerous DG methods have subsequently been developed for second-order elliptic problems, including IPDG methods \cite{abcm2002, mss2017}, local DG methods \cite{c2006, ccps2000, ckps2001}, hybrid mixed DG methods \cite{es2010}, HDG methods \cite{cgl2009, cgnps2011, lc2022, qs2016}, and hybrid high-order methods \cite{be2018, cel2019}. For maximum-norm error estimates for IPDG, HDG, and local DG methods, here we refer the interested reader to \cite{c2005, cc2005, cmz2021, g2006, lc2026, lq2026}.

To the best of our knowledge, one of the earliest systematic analyses of discrete $W^{1,p}$ error estimates for DG methods was given in \cite{fns2018}. In that work, the authors established discrete $W^{1,p}~(1<p<\infty)$ error estimates and discrete $W^{1,q}$-$W^{1,p}~(2\leq p<\infty)$ inf-sup stability for the symmetric IPDG method applied to second-order elliptic equations on smooth domains, using the approach developed in \cite{c2005, cc2005} together with Fubini-type weighted estimates. More specifically, they proved the following estimates (see \cite[Theorem 3.1 and Lemma 3.5]{fns2018}):
\begin{align}
|||u-u_h|||_{W_{hh}^{1,p}}\leq Ch|\ln h|^{t}\inf_{\chi\in V_h^{\rm DG}}|u-\chi|_{W_h^{2,p}(\Omega)},\quad 1<p<\infty,\label{W1p-re:1}
\end{align}
and
\begin{align}
|||v_h|||_{W_{hh}^{1,p}(\Omega)}\leq C|\ln h|^t\sup_{0\neq w_h\in V_h^{\rm DG}}\frac{a_{h}(v_h,w_h)}{|||w_h|||_{W_{hh}^{1,q}(\Omega)}},\quad \forall v_h\in V_h^{\rm DG},~2\leq p<\infty,\label{W1p-re:2}
\end{align}
where $u$ is the solution of the model problem, $a_{h}$ is the bilinear form associated with the IPDG method of the model problem, $u_h\in V_h^{\rm DG}$ is the IPDG approximation of $u$, $V_h^{\rm DG}$ is the discrete space defined in Subsection \ref{sec3:1} consisting of piecewise polynomials with degree less than or equal to $k\geq 1$, $|||\cdot|||_{W_{hh}^{1,p}}$ is a mesh-dependent norm defined in Corollary \ref{W1p-IPDG-2}, $q$ is the conjugate number of $p$ such that $\frac{1}{p}+\frac{1}{q}=1$, and $t=(p+1)/p$ if $k=1$ and $t=0$ if $k\geq 2$. Using these $W^{1,p}$ error estimates and a freezing coefficient technique, the authors further derived error estimates in a discrete $W^{2,p}$ norm for the IPDG method with $k\geq 2$ applied to elliptic equations in non-divergence form. For $k=1$, however, the logarithmic factor in (\ref{W1p-re:1}) prevents a direct application of the same argument.

On convex polyhedral domains, extending the analysis in \cite{fns2018} introduces a logarithmic factor even for higher polynomial degrees. A central difficulty is the control of the jump contribution:
$$\sum_{F\in\mathcal{E}_h}h_F^{1-p}\|[u-u_h]\|_{L^p(F)}^p,$$
where $\mathcal{E}_h$ denotes the set of mesh faces and $[u-u_h]$ is the jump defined in Subsection \ref{sec3:1}.
More specifically, the authors of \cite{fns2018} used a trace inequality to obtain
\begin{align*}
\sum_{F\in\mathcal{E}_h}h_F^{1-p}\|[u-u_h]\|_{L^p(F)}^p\leq C\left(\|\nabla(u-u_h)\|_{L_h^p(\Omega)}^p+h^{-1}\|u-u_h\|_{L^p(\Omega)}\right).
\end{align*}
To estimate $\|u-u_h\|_{L^p(\Omega)}$, they employed a technique similar to those used to bound $\|u-u_h\|_{L^\infty(\Omega)}$ in \cite{c2005, cc2005, cmz2021, g2006, lc2026}. This technique arises from pointwise error estimates and inevitably leads to a logarithmic factor on convex polyhedral domains. In particular, the resulting logarithmic loss persists even for higher-degree polynomial approximations, so this approach does not yield logarithm-free estimates for the jump contribution. Thus, obtaining logarithm-free estimates requires a different treatment of the above interface jumps.

One possible route to removing this logarithmic loss is to exploit weak discrete
maximum principles, which have been established for FEMs in polygonal domains \cite{s1980} and convex polyhedral domains \cite{ll2020} for polynomial degrees greater than or equal to one. For DG methods, a weak discrete maximum principle would take the form:
\begin{align}
\|u_h^{\rm nm}\|_{L^{\infty}(\Omega)}\leq C\|u\|_{L^{\infty}(\partial\Omega)},\label{wdmp}
\end{align}
where $u$ is a harmonic function and $u_h^{\rm nm}$ is the corresponding discrete approximation obtained by the DG method. However, to the best of our knowledge, a weak discrete maximum principle of the form (\ref{wdmp}) has not been established for DG methods. We note that the weak discrete maximum principle for IPDG methods established in \cite{ChibaSaito2019} differs from (\ref{wdmp}), as it replaces $\Vert u\Vert_{L^{\infty}(\partial\Omega)}$ with $\Vert u_{h}^{{\rm nm}}\Vert_{L^{\infty}(\partial\Omega)}$, which is not known a priori. Furthermore, adapting the analysis of the weak discrete maximum principle in \cite{ll2020} to DG methods requires optimal discrete $W^{1,p}$ error estimates without logarithmic factors for some $p$ in a neighborhood of $2$. Thus, establishing such error estimates is an important step toward proving weak discrete maximum principles for DG methods, but it is not clear how to obtain these error estimates.

The primary objective of this paper is to address this problem.

In this paper, we consider a symmetric IPDG method, a hybrid mixed DG method, and two representative HDG methods for the Poisson problem on convex polyhedral domains. Using the interface Green's functions developed in \cite{lq2026} to represent and estimate jumps across the mesh interfaces, we reduce the pointwise error estimates for the gradients and jumps to weighted interpolation error estimates in the $L^p$ norm for $2\leq p<\frac{1}{\epsilon}$ (see (\ref{rG:2}), (\ref{rG:3}), (\ref{rG-HDG12:1}), and (\ref{rG-HDG0:1})), where $\epsilon\in(0,\frac{1}{2})$ is any given constant. Combining these pointwise error estimates with the Fubini-type weighted estimates established in Lemma \ref{fubini} and Lemma \ref{fubini1}, we obtain optimal discrete $W^{1,p}$ error estimates for $2\leq p<\frac{1}{\epsilon}$. These error estimates imply discrete $W^{1,q}$-$W^{1,p}$ inf-sup stability for $2\leq q<\frac{1}{\epsilon}$. Using this stability result and a suitably constructed dual problem, we then derive optimal discrete $W^{1,p}$ error estimates for $\frac{1}{1-\epsilon}<p<2$, which in turn imply discrete $W^{1,q}$-$W^{1,p}$ inf-sup stability for $\frac{1}{1-\epsilon}< q<2$. The optimal discrete $W^{1,p}$ error estimates for the IPDG and HDG methods are summarized in Theorem \ref{W1p-IPDG} and Theorem \ref{W1p-HDG-pre}, with proofs provided in Subsection \ref{sec5:1} and Subsection \ref{sec5:2}, respectively. Beyond their role in the error analysis, the discrete $W^{1,q}$-$W^{1,p}$ inf-sup stability results
for the IPDG and HDG methods, presented in Lemma \ref{inf-sup} and Lemma \ref{inf-sup-HDG0} respectively, are of independent interest in numerical analysis.
Notably, both the optimal discrete $W^{1,p}$ error estimates and the discrete $W^{1,q}$-$W^{1,p}$ inf-sup stability are logarithm-free and also hold on polyhedral meshes (except for the hybrid mixed DG method). They can therefore be used to prove $W^{2,p}$ error estimates for DG approximations of elliptic equations in non-divergence form and to establish weak discrete maximum principles for DG methods on convex polyhedral domains.

The rest of this paper is organized as follows. Section \ref{sec2} and Section \ref{sec3} introduce the notation and mesh assumptions, present the IPDG and HDG approximations of the Poisson problem, and review some auxiliary results. Section \ref{sec4} summarizes the main results of this paper, and Section \ref{sec5} provides detailed proofs of these results. Section \ref{sec6} presents a numerical example to illustrate
the theoretical results. Finally, we end this paper with some conclusions in Section \ref{sec7}.

\section{Notations}\label{sec2}
Let $\Omega\subset \mathbb{R}^3$ be an open bounded domain that is convex with polyhedral boundary $\partial\Omega$. For any nonnegative integer $t$, an open subset $D\subset\Omega$ and $1\leq p\leq \infty$, let $W^{t,p}(D)$ be the Sobolev spaces (cf. \cite{a1975}) with
norms
\begin{align*}
\|v\|_{W^{t,p}(D)}^p=\sum_{i=0}^t|v|_{W^{i,p}(D)}^p\quad 1\leq p<\infty,\quad \|v\|_{W^{t,\infty}(D)}=\max_{0\leq i\leq t}|v|_{W^{i,\infty}(D)},
\end{align*}
and seminorms
\begin{align*}
|v|^p_{W^{i,p}(D)}=\sum_{|\alpha|=i}\int_D\left|\frac{\partial^{\alpha}v}{\partial x^{\alpha}}\right|^p dx\quad 1\leq p<\infty,\quad
|v|_{W^{i,\infty}(D)}=\max_{|\alpha|=i}\left\|\frac{\partial^{\alpha}v}{\partial x^{\alpha}}\right\|_{L^{\infty}(D)},
\end{align*}
where
\begin{align*}
\frac{\partial^{\alpha}v}{\partial x^{\alpha}}=\frac{\partial^{|\alpha|}v}{\partial x_1^{\alpha_1}\partial x_2^{\alpha_2}\partial x_3^{\alpha_3}}
\end{align*}
for the multi-index $\alpha=(\alpha_1,\alpha_2,\alpha_3),~\alpha_1\geq 0,~\alpha_2\geq 0,~\alpha_3\geq 0$, and $|\alpha|=\alpha_1+\alpha_2+\alpha_3$.
Denote by $W_0^{t,p}(D)~(1<p<\infty)$ the completion of $\mathcal{C}_0^{\infty}(D)$ according to the norm $\|\cdot\|_{W^{t,p}(D)}$, where
$\mathcal{C}_0^{\infty}(D)$ is the space of functions with continuous derivatives of arbitrary order and compact support in $D$. If $p=2$, the
Sobolev spaces $W^{t,2}(D)$ are denoted by $H^t(D)$ with norms $\|\cdot\|_{H^t(D)}$ and seminorms $|\cdot|_{H^t(D)}$. If $t=0$, we set $L^p(D)=W^{0,p}(D)$
with norm $\|\cdot\|_{L^p(D)}$.

Let $\mathcal{T}_h$ be a conforming and quasi-uniform mesh of the domain $\Omega$. The mesh $\mathcal{T}_h$ may consist of either simplices or general
polyhedral elements. For a general polyhedral mesh, we assume that for each polyhedral element $K$, there is a shape-regular sub-triangulation and the number of sub-elements is uniformly bounded by a positive constant $\mathcal{N}$ independent of the mesh size. The meshes associated with the different numerical methods will be specified when the methods are introduced.
Denote by $\mathcal{E}_h^o$ the set of all interior faces of $\mathcal{T}_h$ and $\mathcal{E}_h^{\partial}$ the set of all boundary faces.
We define $\mathcal{E}_h=\mathcal{E}_h^o\cup\mathcal{E}_h^{\partial}$ and $\partial\mathcal{T}_h=\{\partial K: K\in\mathcal{T}_h\}$, where $\partial K$ denotes
the boundary of the element $K$. For each element $K\in\mathcal{T}_h$ and face $F\in\mathcal{E}_h$, let $h_K$ and $h_F$ be the diameters of $K$ and $F$, respectively. Moreover, we set $h=\max_{K\in\mathcal{T}_h}h_K$.

Next, we introduce mesh-dependent Sobolev spaces:
\begin{align*}
W^{t,p}_h(D)=\{v: v\in W^{t,p}(K\cap D), ~\forall K\in\mathcal{T}_h\},
\end{align*}
with norms
\begin{align*}
\|v\|_{W_h^{t,p}(D)}^p=\sum_{i=0}^t|v|_{W^{i,p}_h(D)}^p\quad 1\leq p<\infty,\quad \|v\|_{W_h^{t,\infty}(D)}=\max_{0\leq i\leq t}|v|_{W_h^{i,\infty}(D)},
\end{align*}
and seminorms
\begin{align*}
|v|_{W_h^{i,p}(D)}^p=\sum_{K\in\mathcal{T}_h}|v|^p_{W^{i,p}(K\cap D)}\quad 1\leq p<\infty,\quad |v|_{W^{i,\infty}_h(D)}=\max_{K\in\mathcal{T}_h}|v|_{W^{i,\infty}(K\cap D)}.
\end{align*}
If $p=2$, the mesh-dependent Sobolev spaces $W_h^{t,2}(D)$ are denoted by $H_h^t(D)$ with norms $\|\cdot\|_{H_h^t(D)}$ and  seminorms $|\cdot|_{H_h^t(D)}$.
If $t=0$, the mesh-dependent Sobolev spaces $W_h^{0,p}(D)$ are abbreviated by $L_h^p(D)$ with norms $\|\cdot\|_{L_h^p(D)}$.

\section{IPDG and HDG methods}\label{sec3}
In this section, we consider the following Poisson problem in the convex polyhedral domain $\Omega\subset \mathbb{R}^3$:
\begin{align}
-\Delta u=f,\quad \text{in}~\Omega,\quad \quad u=0,\quad \text{on}~\partial\Omega.\label{model}
\end{align}
\begin{lemma}[Regularity]\label{model-r}
(1) There exists a constant $p_0>2$, depending on the domain $\Omega$, such that for any $f\in L^p(\Omega)~(1<p<p_0)$, problem (\ref{model}) has a unique solution $u\in W^{2,p}(\Omega)\cap W_0^{1,p}(\Omega)$ which satisfies
\begin{align}
\|u\|_{W^{2,p}(\Omega)}\leq C\|f\|_{L^p(\Omega)}.\label{model-r:1}
\end{align}

(2) For any $f\in W^{-s,p}(\Omega)$ with $0<s\leq 1$ and $1<p<\frac{2}{1-s}$, problem (\ref{model}) has a unique solution $u\in W^{1,p}_0(\Omega)\cap W^{2-s,p}(\Omega)$ which satisfies
\begin{align}
\|u\|_{W^{2-s,p}(\Omega)}\leq C\|f\|_{W^{-s,p}(\Omega)},\label{model-r:2}
\end{align}
where $W^{-s,p}(\Omega)$ denotes the dual space of $W_0^{s,p^{\prime}}(\Omega)$ and $p^{\prime}$ is the conjugate number of $p$ such that
$\frac{1}{p}+\frac{1}{p^{\prime}}=1$.
\end{lemma}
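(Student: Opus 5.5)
The statement collects classical regularity results for the Dirichlet Laplacian on a convex polyhedron. I will derive it from the known theory rather than from scratch, treating (1) and (2) separately.

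For part (1), the starting point is the $H^2$ regularity on convex domains (Grisvard, or Kadlec). For $f\in L^2(\Omega)$ there is a unique $u\in H^2(\Omega)\cap H_0^1(\Omega)$ with $\|u\|_{H^2(\Omega)}\le C\|f\|_{L^2(\Omega)}$. This follows from the Miranda--Talenti inequality $\|D^2u\|_{L^2}\le\|\Delta u\|_{L^2}$, proved on smooth convex approximating domains and then passed to the limit.

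To go beyond $p=2$, I would use Dauge's corner and edge singularity analysis in polyhedra. Near an edge with opening angle $\omega<\pi$, the leading singular exponent is $\pi/\omega>1$. Near a vertex, the leading exponent is $\lambda_1>1$, which comes from the first Dirichlet eigenvalue of the Laplace--Beltrami operator on the spherical cap cut out by the convex cone. Since these exponents lie strictly above $1$, the singular functions $r^{\lambda}\phi$ belong to $W^{2,p}$ for $p<p_0$, with $p_0>2$ determined by the minimal exponents. On the Mellin/Kondrat'ev side there are then no poles in the relevant strip, so $W^{2,p}$ regularity holds for $p<p_0$ together with the a priori bound (\ref{model-r:1}).

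For $1<p<2$ I would use a duality and density argument. Uniqueness comes from uniqueness in $W_0^{1,p}$, which is the dual of $W_0^{1,p'}$ solvability for $p'>2$, valid on convex (in fact Lipschitz) domains for $p'$ near $2$ and for all $p'$ when $\Omega$ is convex. The estimate again follows from the Kondrat'ev theory, for which small $p$ poses no obstruction. I expect the main obstacle to be making the vertex and edge exponents rigorous. I will simply cite Grisvard's and Dauge's results for this step rather than re-derive them.

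For part (2), I would interpolate. The endpoint $s=1$ is the $W^{1,p}$ estimate $\|u\|_{W^{1,p}}\le C\|f\|_{W^{-1,p}}$. By Jerison--Kenig, this holds on Lipschitz domains for $\frac32-\delta<p<3+\delta$ in three dimensions, and on convex domains for all $1<p<\infty$ (Fromm, Geng--Shen); in particular it holds for $1<p<2=\frac{2}{1-s}$. The other endpoint, $s=0$, is part (1) with $p<p_0$. For intermediate $s$, complex interpolation between $L^{p_1}\to W^{2,p_1}$ and $W^{-1,p_2}\to W^{1,p_2}$ gives $W^{-s,p}\to W^{2-s,p}$ with $\frac1p=\frac{1-s}{p_1}+\frac{s}{p_2}$. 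Choosing $p_1$ close to $2$ (or to $p_0$) and $p_2$ arbitrarily large (convex case) covers every $p<\frac{1}{(1-s)/2}=\frac{2}{1-s}$.

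The delicate point in this step is identifying the interpolation spaces $[L^{p_1},W^{-1,p_2}]_s$ and $[W^{2,p_1}\cap W_0^{1,p_1},W_0^{1,p_2}]_s$ with $W^{-s,p}$ and $W^{2-s,p}\cap W_0^{1,p}$. For the negative-order space, I would handle this by duality with $W_0^{s,p'}$, excluding the exceptional index $s=1/p'$ or treating it via Lions--Magenes; for the positive-order space, I would cite Triebel's results for Lipschitz domains.
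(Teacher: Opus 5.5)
Your route is essentially the paper's. The paper proves the lemma purely by citation:
\begin{itemize}
\item Part (1) comes from Dauge's Corollary 3.12 together with Lemma 2.1 of Leykekhman--Li. This is exactly the edge/vertex exponent analysis you sketch ($\pi/\omega>1$ at edges and $\lambda_1>1$ at vertices of a convex polyhedron).
\item Part (2) comes from Fromm's Corollary 1. Its range $1<p<\frac{2}{1-s}$ is precisely the interpolation range between $L^{2}\to H^{2}$ at $s=0$ and $W^{-1,p}\to W^{1,p}_0$, $1<p<\infty$, at $s=1$ on convex domains.
\end{itemize}

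One step of (2) does not work as you state it: the identification you plan to take from Triebel.
\begin{itemize}
\item Complex interpolation of integer-order spaces with $p_1\neq p_2$ yields the Bessel potential scale, up to boundary conditions: $[W^{2,p_1},W^{1,p_2}]_s=H^{2-s,p}$ and, by duality, $[L^{p_1},W^{-1,p_2}]_s=H^{-s,p}$.
\item The lemma, however, is stated in the Sobolev--Slobodeckij scale $W^{2-s,p}=B^{2-s}_{p,p}$, $W^{-s,p}=(W_0^{s,p'})'$. It is also used in that scale later, through the Gagliardo seminorm in Lemma \ref{tra} and Theorem \ref{W1p-IPDG}.
\item For $p\neq 2$ and non-integer order the two scales differ, and the needed embedding fails on one side. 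For $p>2$ you have $H^{2-s,p}\subset W^{2-s,p}$ but $W^{-s,p}\not\subset H^{-s,p}$. For $p<2$ it is the other way round.
\end{itemize}
So your argument as written proves $H^{-s,p}\to H^{2-s,p}$, not (\ref{model-r:2}).

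To land in the $W$-scale, interpolate the solution operator with the real method $(\cdot,\cdot)_{\theta,p}$ instead. That is the method that produces $B_{p,p}$-spaces when $\frac1p=\frac{1-\theta}{p_1}+\frac{\theta}{p_2}$. The boundary-condition and duality identifications you already mention, including the exceptional index $s=1/p'$, are then the remaining work. Fromm's cited result is itself a potential-space estimate, so this extra step is also needed to read the paper's citation in the Slobodeckij sense.

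Finally, a small slip: at $s=1$ the bound $\frac{2}{1-s}$ is $+\infty$, not $2$. That endpoint therefore requires the full range $1<p<\infty$, which you do have on convex domains.
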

\begin{proof}
The results in (1) follow from \cite[Corollary 3.12]{d1992} and \cite[Lemma 2.1]{ll2020}, while those in (2)
can be obtained from \cite[Corollary 1]{f1993}.
\end{proof}

For any given constant $\epsilon\in (0,\frac{1}{2})$, by choosing $s=1-\frac{1}{p}-\epsilon$ with $\frac{1}{1-\epsilon}<p<\frac{1}{\epsilon}$ in (2) of Lemma \ref{model-r}, we know that for
any $f\in W^{\frac{1}{p}+\epsilon-1,p}(\Omega)$, problem (\ref{model}) has a unique solution $u\in W_0^{1,p}(\Omega)\cap W^{1+\frac{1}{p}+\epsilon,p}(\Omega)$
which satisfies
\begin{align}
\|u\|_{W^{1+\frac{1}{p}+\epsilon,p}(\Omega)}\leq C\|f\|_{W^{\frac{1}{p}+\epsilon-1,p}(\Omega)}.\label{model-r:3}
\end{align}

By setting ${\bm q}=-\nabla u$, problem (\ref{model}) can be rewritten as the following mixed formulation:
\begin{align}\label{mixed}
{\bm q}+\nabla u=0\quad {\rm in}~\Omega,\quad \nabla\cdot{\bm q}=f\quad {\rm in}~\Omega,\quad
u=0\quad {\rm on}~\partial\Omega.
\end{align}

\subsection{IPDG methods for (\ref{model})}\label{sec3:1}
Based on the general polyhedral mesh $\mathcal{T}_h$, we define the following discrete space
\begin{align*}
V_h^{{\rm DG}}=\{v_h\in L^2(\Omega): v_h|_K\in\mathcal{P}^k(K),~ \forall K\in\mathcal{T}_h\},\quad k\geq 1,
\end{align*}
where $\mathcal{P}^k(D)$ denotes the set of polynomials with degree less than or equal to $k$ on the domain $D$.
Then, the IPDG discretization of (\ref{model}) reads as follows: Find $u_h^{\rm DG}\in V_h^{{\rm DG}}$ such that
\begin{align}
a_h(u_h^{{\rm DG}},v_h)=(f,v_h)_{\mathcal{T}_h},\quad \forall v_h\in V_h^{{\rm DG}},\label{IPDG}
\end{align}
where the bilinear form $a_h$ is defined as:
\begin{align*}
a_h(u_h^{{\rm DG}},v_h)=&(\nabla u_h^{{\rm DG}},\nabla v_h)_{\mathcal{T}_h}-\sum_{F\in\mathcal{E}_h}\int_F\left(\{\nabla u_h^{{\rm DG}}\cdot{\bm n}\}[v_h]+\{\nabla v_h\cdot{\bm n}\}[u_h^{{\rm DG}}]\right) ds\\
&+\sum_{F\in\mathcal{E}_h}\int_F\frac{\lambda}{h_F}[u_h^{{\rm DG}}][v_h] ds.
\end{align*}
Here, $(\cdot,\cdot)_{\mathfrak{D}}=\sum_{K\in\mathfrak{D}}(\cdot,\cdot)_K$ for any $\mathfrak{D}\subset\mathcal{T}_h$, $(\cdot,\cdot)_K$ denotes the inner product in $L^2(K)$, $\lambda$ is the stabilization parameter which is independent of the mesh size and usually chosen to be large enough to ensure the stability of (\ref{IPDG}). For each interior face
$F\in\mathcal{E}_h^o$, $\{\nabla v_h\cdot{\bm n}\}$ and $[v_h]$ denote the average of the normal derivative and the jump of $v_h$ defined by
\begin{align*}
\{\nabla v_h\cdot{\bm n}\}=\frac{1}{2}\left(\nabla v_h^+\cdot{\bm n}_F+\nabla v_h^-\cdot{\bm n}_F\right),\quad [v_h]=v_h^+-v_h^-,
\end{align*}
where $v_h^+$ and $v_h^-$ are the restrictions of $v_h$ on the elements $K^+$ and $K^-$ that share the common face $F$, and
${\bm n}_F$ is the unit normal vector on $F$ pointing from $K^+$ to $K^-$. For each boundary face $F\in\mathcal{E}_h^{\partial}$, the average and jump are defined as $\{\nabla v_h\cdot{\bm n}\}=\nabla v_h\cdot{\bm n}$ and $[v_h]=v_h$, where ${\bm n}$ denotes the outward unit normal vector to $\partial\Omega$.

\subsection{HDG methods for (\ref{mixed})}\label{sec3:2}
Based on the mesh $\mathcal{T}_h$, we define the following discrete spaces:
\begin{align*}
{\bm W}_h=&\{{\bm w}_h\in (L^2(\Omega))^3: {\bm w}_h|_K\in {\bm W}(K),~\forall K\in\mathcal{T}_h\},\\
V_h=&\{v_h\in L^2(\Omega): v_h|_K\in V(K),~\forall K\in\mathcal{T}_h\},\\
\widehat{V}_h=&\{\widehat{v}_h\in L^2(\mathcal{E}_h): \widehat{v}_h|_F\in\mathcal{P}^k(F)~\forall F\in\mathcal{E}_h,~\widehat{v}_h|_F=0~\forall F\in\mathcal{E}_h^{\partial}\},
\end{align*}
where the local discrete spaces $W(K)$ and $V(K)$ are defined as follows:
\begin{align*}
({\bm W}(K), V(K))=\begin{cases}
\left((\mathcal{P}^k(K))^3+{\bm x}\mathcal{P}^k(K), \mathcal{P}^k(K)\right),~ &{\rm hybrid~mixed~DG},\quad{\rm on~simplicial~meshes},\\
\left((\mathcal{P}^k(K))^3,\mathcal{P}^k(K)\right), &{\rm HDG}-1,\quad{\rm on~general~polyhedral~meshes},\\
\left((\mathcal{P}^k(K))^3,\mathcal{P}^{k+1}(K)\right), &{\rm HDG}-2,\quad{\rm on~general~polyhdedal~meshes}.
\end{cases}
\end{align*}
Then, the HDG discretization of (\ref{mixed}) reads as follows: Find $({\bm q}_h,u_h,\widehat{u}_h)\in {\bm W}_h\times V_h\times \widehat{V}_h$ such that
\begin{subequations}\label{HDG}
\begin{align}
({\bm q}_h,{\bm w}_h)_{\mathcal{T}_h}-(u_h,\nabla\cdot{\bm w}_h)_{\mathcal{T}_h}+\langle\widehat{u}_h,{\bm w}_h\cdot{\bm n}\rangle_{\partial\mathcal{T}_h}=0,\label{HDG:1}\\
-({\bm q}_h,\nabla v_h)_{\mathcal{T}_h}+\langle\widehat{\bm q}_h\cdot{\bm n},v_h\rangle_{\partial\mathcal{T}_h}=(f,v_h)_{\mathcal{T}_h},\label{HDG:2}\\
\langle\widehat{\bm q}_h\cdot{\bm n},\widehat{v}_h\rangle_{\partial\mathcal{T}_h}=0,\label{HDG:3}\\
\widehat{\bm q}_h\cdot{\bm n}={\bm q}_h\cdot{\bm n}+\tau(\mathcal{L}u_h-\widehat{u}_h),\quad {\rm on}~\partial\mathcal{T}_h,\label{HDG:4}
\end{align}
\end{subequations}
for any $({\bm w}_h,v_h,\widehat{v}_h)\in {\bm W}_h\times V_h\times \widehat{V}_h$, where ${\bm n}$ is the outward unit normal vector to $\partial K$, $\langle\cdot,\cdot\rangle_{\partial K}$ denotes the inner product in $L^2(\partial K)$, and $\langle\cdot,\cdot\rangle_{\partial \mathfrak{D}}=\sum_{K\in \mathfrak{D}} \langle\cdot,\cdot\rangle_{\partial K}$
for any $\mathfrak{D}\subset\mathcal{T}_h$. Here, $\tau$ denotes the stabilization parameter. On each $K\in\mathcal{T}_h$, it is chosen as:
\begin{align*}
\begin{cases}
\tau=0~{\rm on}~\partial K,\quad & {\rm hybrid~mixed~DG},\\
\tau=h_K^{-1}~{\rm on}~F^{\ast}_K~{\rm and}~\tau=0~{\rm on}~\partial K\backslash F_K^{\ast},\quad & {\rm HDG-1},\\
\tau=h_K^{-1}~{\rm on}~\partial K,\quad &{\rm HDG-2},
\end{cases}
\end{align*}
where $F_K^{\ast}$ denotes a fixed face of the element $K$ such that $K$ lies on one side of the hyperplane containing $F_K^{\ast}$. $\mathcal{L}$ is a projection operator defined as:
\begin{align*}
\mathcal{L}=\begin{cases}
\mathcal{I},\quad & {\rm hybrid~mixed~DG},~{\rm HDG-1},\\
\Pi_{\widehat{V}},\quad &{\rm HDG-2},
\end{cases}
\end{align*}
where $\mathcal{I}$ is the identity mapping and $\Pi_{\widehat{V}}$ denotes the standard $L^2$-orthogonal projection onto $\mathcal{P}^k(F)$ on each
$F\in\mathcal{E}_h$. Summing the equations in (\ref{HDG}) together, we obtain an equivalent formulation of (\ref{HDG}): Find $({\bm q}_h,u_h,\widehat{u}_h)\in {\bm W}_h\times V_h\times \widehat{V}_h$ such that
\begin{align}
\mathcal{B}({\bm q}_h,u_h,\widehat{u}_h;{\bm w}_h,v_h,\widehat{v}_h)=-(f,v_h)_{\mathcal{T}_h},\quad \forall ({\bm w}_h,v_h,\widehat{v}_h)\in {\bm W}_h\times V_h\times \widehat{V}_h,\label{eHDG}
\end{align}
where
\begin{align*}
\mathcal{B}({\bm q}_h,u_h,\widehat{u}_h;{\bm w}_h,v_h,\widehat{v}_h)=&({\bm q}_h,{\bm w}_h)_{\mathcal{T}_h}-(u_h,\nabla\cdot{\bm w}_h)_{\mathcal{T}_h}
+\langle\widehat{u}_h,{\bm w}_h\cdot{\bm n}\rangle_{\partial\mathcal{T}_h}\\
&-(\nabla\cdot{\bm q}_h,v_h)_{\mathcal{T}_h}-\langle\tau(\mathcal{L}u_h-\widehat{u}_h),\mathcal{L}v_h-\widehat{v}_h\rangle_{\partial \mathcal{T}_h}
+\langle{\bm q}_h\cdot{\bm n},\widehat{v}_h\rangle_{\partial\mathcal{T}_h}.
\end{align*}

The hybrid mixed DG, HDG-1, and HDG-2 methods are introduced in \cite{es2010, cfq2018, qs2016}, respectively. In fact, the hybrid mixed DG method provides the
same approximations as the mixed method which approximates ${\bm q}$ by $H$(div)-conforming Raviart-Thomas elements.

\subsection{Auxiliary results}\label{sec3:3}
We conclude this section by introducing some results associated to the discrete problems (\ref{IPDG}) and (\ref{HDG}),
and some well-known results that will be used frequently in the subsequent proofs.

\begin{lemma}\label{dfp}
Let $({\bm q},u)$ be the solution of problem (\ref{mixed}), then we have
\begin{align}
a_h(u,v_h)=(f,v_h)_{\mathcal{T}_h},\quad \forall v_h\in V_h^{{\rm DG}},
\end{align}
and
\begin{align}
\mathcal{B}({\bm q},u,u;{\bm w}_h,v_h,\widehat{v}_h)=-(f,v_h)_{\mathcal{T}_h},\quad \forall ({\bm w}_h,v_h,\widehat{v}_h)\in {\bm W}_h\times V_h\times \widehat{V}_h.
\end{align}
\end{lemma}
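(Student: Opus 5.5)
The proof is a consistency check: insert the exact solution into each discrete form, integrate by parts elementwise, and use the regularity of $u$ to kill the interface terms. First we record what regularity is available. With $f\in L^2(\Omega)$, Lemma \ref{model-r}(1) gives $u\in H^2(\Omega)\cap H_0^1(\Omega)$. Hence on every face $F\in\mathcal{E}_h^o$ the traces of $u$ and of $\nabla u\cdot{\bm n}_F$ from the two sides agree, so $[u]=0$ and $\{\nabla u\cdot{\bm n}\}=\nabla u\cdot{\bm n}_F$. On boundary faces, $[u]=u=0$. For the mixed form, ${\bm q}=-\nabla u\in (H^1(\Omega))^3$, so ${\bm q}\cdot{\bm n}$ is single-valued across interior faces. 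Moreover $\nabla\cdot{\bm q}=f$ and ${\bm q}+\nabla u=0$ hold in $L^2$.

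\emph{IPDG identity.} In $a_h(u,v_h)$, the terms containing $[u]$ vanish, which leaves
\begin{align*}
a_h(u,v_h)=(\nabla u,\nabla v_h)_{\mathcal{T}_h}-\sum_{F\in\mathcal{E}_h}\int_F \nabla u\cdot{\bm n}\,[v_h]\,ds .
\end{align*}
Integrating by parts on each $K$ gives $(\nabla u,\nabla v_h)_K=(-\Delta u,v_h)_K+\langle\nabla u\cdot{\bm n}_K,v_h\rangle_{\partial K}$. Summing over $K$ and regrouping the boundary integrals face by face, the single-valuedness of $\nabla u\cdot{\bm n}_F$ turns $\sum_K\langle\nabla u\cdot{\bm n}_K,v_h\rangle_{\partial K}$ into $\sum_F\int_F\nabla u\cdot{\bm n}_F[v_h]\,ds$. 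The sign convention works because ${\bm n}_F$ points from $K^+$ to $K^-$ and $[v_h]=v_h^+-v_h^-$. This face sum cancels the consistency term above, and what remains is $(-\Delta u,v_h)_{\mathcal{T}_h}=(f,v_h)_{\mathcal{T}_h}$.

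\emph{HDG identity.} Set $\widehat{u}_h=u|_{\mathcal{E}_h}$. This is admissible in the formula for $\mathcal{B}$ since $u=0$ on $\partial\Omega$. We check the terms of $\mathcal{B}({\bm q},u,u;\cdot)$ group by group.
\begin{itemize}
\item Elementwise integration by parts gives $-(u,\nabla\cdot{\bm w}_h)_K+\langle u,{\bm w}_h\cdot{\bm n}\rangle_{\partial K}=(\nabla u,{\bm w}_h)_K$. Adding $({\bm q},{\bm w}_h)_{\mathcal{T}_h}$ then yields $({\bm q}+\nabla u,{\bm w}_h)_{\mathcal{T}_h}=0$.
\item The stabilization term carries the factor $\mathcal{L}u-u$ on $\partial K$. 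For the hybrid mixed DG and HDG-1 methods $\mathcal{L}=\mathcal{I}$, so $\mathcal{L}u-u=0$. For HDG-2 the factor is $\Pi_{\widehat V}u-u$, which is not zero. However, $\mathcal{L}v_h-\widehat v_h=\Pi_{\widehat V}v_h-\widehat v_h\in\mathcal{P}^k(F)$ on each face and $\tau$ is constant on $\partial K$. Therefore $\langle\tau(\Pi_{\widehat V}u-u),\Pi_{\widehat V}v_h-\widehat v_h\rangle_F=0$ by $L^2$-orthogonality.
\item $\langle{\bm q}\cdot{\bm n},\widehat v_h\rangle_{\partial\mathcal{T}_h}=0$. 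Each interior face is visited twice, with opposite normals and the same single-valued ${\bm q}\cdot{\bm n}$ and $\widehat v_h$, so the two contributions cancel. On boundary faces $\widehat v_h=0$.
\item The remaining term is $-(\nabla\cdot{\bm q},v_h)_{\mathcal{T}_h}=-(f,v_h)_{\mathcal{T}_h}$.
\end{itemize}

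The only slightly delicate points are the sign bookkeeping when converting element boundary sums into face sums with jumps and averages, and the HDG-2 projection term. Both are handled as described above. If one wants the lemma under the weaker data $f\in W^{\frac1p+\epsilon-1,p}$, the traces still make sense, because $u\in W^{1+\frac1p+\epsilon,p}$ by (\ref{model-r:3}) and $\nabla u$ then has $L^p$ traces on faces. In that case $(f,v_h)_{\mathcal{T}_h}$ is understood as a duality pairing. The argument is unchanged, with integration by parts justified by density.
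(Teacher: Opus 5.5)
Your proposal is correct and takes the same route as the paper, whose proof says only that the lemma follows from elementwise integration by parts. You fill in the details the paper leaves implicit: the regularity assumption ($f\in L^2(\Omega)$, hence $u\in H^2(\Omega)$), the face-by-face sign bookkeeping, the cancellation of $\langle{\bm q}\cdot{\bm n},\widehat v_h\rangle_{\partial\mathcal{T}_h}$, and the vanishing of the HDG-2 stabilization term by $L^2$-orthogonality of $\Pi_{\widehat V}$.
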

\begin{proof}
This lemma can be proved directly by the integration by parts.
\end{proof}

\begin{lemma}\label{inequality}
Let $({\bm q}_h,u_h,\widehat{u}_h)\in {\bm W}_h\times V_h\times \widehat{V}_h$ satisfy (\ref{HDG:1}), then we have
\begin{align}
\|\nabla u_h\|^2_{L^2(K)}+h_K^{-1}\|u_h-\widehat{u}_h\|_{L^2(\partial K)}^2\leq C\left(\|{\bm q}_h\|^2_{L^2(K)}+\|\sqrt{\tau}(\mathcal{L}u_h-\widehat{u}_h)\|
^2_{L^2(\partial K)}\right),
\end{align}
for each $K\in\mathcal{T}_h$.
\end{lemma}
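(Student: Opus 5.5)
We prove the bound element by element, using only the local equation obtained from (\ref{HDG:1}) by testing with ${\bm w}_h$ supported in a single $K$. Integrating by parts on $K$ gives
\begin{align*}
({\bm q}_h,{\bm w}_h)_K+(\nabla u_h,{\bm w}_h)_K+\langle\widehat{u}_h-u_h,{\bm w}_h\cdot{\bm n}\rangle_{\partial K}=0,\quad\forall {\bm w}_h\in{\bm W}(K).
\end{align*}
The argument then has two parts. First we bound $\nabla u_h$ by choosing ${\bm w}_h$ suitably. Second we bound the boundary discrepancy $u_h-\widehat{u}_h$. In both parts we use scaling and inverse/trace inequalities on the shape-regular sub-triangulation of $K$.

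\textbf{Hybrid mixed DG and HDG-1.} Here $\nabla u_h\in(\mathcal{P}^{k-1}(K))^3\subset{\bm W}(K)$. For the hybrid mixed DG method we use the Raviart--Thomas degrees of freedom: there is ${\bm w}_h\in{\bm W}(K)$ with ${\bm w}_h\cdot{\bm n}|_F=h_K^{-1}(\widehat{u}_h-u_h)|_F$ on every face and with prescribed interior moments against $(\mathcal{P}^{k-1}(K))^3$. Testing with ${\bm w}_h=\nabla u_h$ plus such a lifting controls both $\|\nabla u_h\|_{L^2(K)}$ and $h_K^{-1/2}\|u_h-\widehat u_h\|_{L^2(\partial K)}$ by $\|{\bm q}_h\|_{L^2(K)}$; the cross terms are absorbed with Young's inequality. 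For HDG-1 the space $(\mathcal{P}^k(K))^3$ cannot prescribe the normal trace on every face. We therefore argue by scaling and finite-dimensional compactness. On a reference configuration, consider the seminorm
\begin{align*}
\|\nabla u_h\|+\|u_h-\widehat{u}_h\|_{\partial K}.
\end{align*}
Suppose ${\bm q}_h=0$ and $u_h-\widehat{u}_h=0$ on $F_K^\ast$ (these are the two quantities on the right-hand side, since there $\mathcal L=\mathcal I$). Then
\begin{align*}
(\nabla u_h,{\bm w}_h)_K=\langle u_h-\widehat{u}_h,{\bm w}_h\cdot{\bm n}\rangle_{\partial K\setminus F_K^\ast}.
\end{align*}
We choose ${\bm w}_h=\phi\,\nabla u_h$, where $\phi\in\mathcal{P}^1$ is the affine function that vanishes on the hyperplane of $F_K^\ast$ and is positive on $K$. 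The resulting degree is $k$, so ${\bm w}_h\in{\bm W}(K)$ as required. Handling the boundary term is the main obstacle. The route I would try first is to note that ${\bm w}_h\cdot{\bm n}$ is arbitrary enough on the other faces: using ${\bm w}_h=\phi\,{\bm r}$ with ${\bm r}\in(\mathcal{P}^{k-1})^3$, the combined map ${\bm r}\mapsto$ (interior moments, face normal traces weighted by $\phi$) should force $u_h-\widehat{u}_h$ to be orthogonal to $\phi\,\mathcal{P}^{k-1}$ traces. Combined with degree counting, this should give $\nabla u_h=0$ and $u_h=\widehat u_h$ on each face, where $\widehat{u}_h|_F\in\mathcal{P}^k(F)$. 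This is the place where the half-space condition on $F_K^\ast$ is used. Once the kernel is shown trivial, equivalence of seminorms on the finite-dimensional space together with scaling (uniform over shapes via the bounded sub-triangulation) yields the estimate.

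\textbf{HDG-2.} Here $\tau=h_K^{-1}$ on all of $\partial K$. It suffices to bound $\nabla u_h$, since then
\begin{align*}
\|u_h-\widehat u_h\|_{\partial K}\le\|u_h-\Pi_{\widehat V}u_h\|_{\partial K}+\|\Pi_{\widehat V}u_h-\widehat u_h\|_{\partial K}.
\end{align*}
The first term is at most $Ch_K^{1/2}\|\nabla u_h\|_K$, by the trace inequality and approximation of $u_h$ minus its mean. For $\nabla u_h\in(\mathcal{P}^k)^3={\bm W}(K)$ we test with ${\bm w}_h=\nabla u_h$. The boundary term splits through $\Pi_{\widehat V}$; the part $u_h-\Pi_{\widehat V}u_h$ is orthogonal to $\nabla u_h\cdot{\bm n}\in\mathcal{P}^k(F)$, so only $\Pi_{\widehat V}u_h-\widehat u_h$ remains. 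An inverse trace estimate then gives $\|\nabla u_h\|_K\le C(\|{\bm q}_h\|_K+h_K^{-1/2}\|\Pi_{\widehat V}u_h-\widehat u_h\|_{\partial K})$, which completes this case.
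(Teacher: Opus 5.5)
The paper does not prove this lemma itself; it cites one original reference per method. Your route is a self-contained local argument, and two of its three parts are sound.

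\textbf{Hybrid mixed DG and HDG-2.} For the hybrid mixed DG method, you lift $h_K^{-1}(\widehat u_h-u_h)$ with a Raviart--Thomas field whose moments against $(\mathcal{P}^{k-1}(K))^3$ vanish, and then test with $\nabla u_h$. This works, with one adjustment: test with the lifting and with $\nabla u_h$ separately, or weight the lifting by a large constant. A plain Young absorption of $\langle\widehat u_h-u_h,\nabla u_h\cdot{\bm n}\rangle_{\partial K}$ would need a small trace constant. For HDG-2, you test with $\nabla u_h$ and use that $u_h-\Pi_{\widehat V}u_h$ is $L^2(F)$-orthogonal to $\nabla u_h\cdot{\bm n}|_F\in\mathcal{P}^k(F)$. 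This is correct and even works on polyhedral elements.

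\textbf{The gap is HDG-1.} There the entire content is that the local kernel is trivial, and you only assert it ``should'' hold. Your multiplier $\phi$ vanishing on $F_K^\ast$ points at the wrong face. In the kernel situation the $F_K^\ast$ term is already zero; the terms that must be killed live on $\partial K\setminus F_K^\ast$, where $\phi>0$.

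More seriously, on general polyhedral elements the kernel is not trivial, so no compactness argument can close. For $k=1$, the data $(\nabla u_h,(u_h-\widehat u_h)|_{\partial K\setminus F_K^\ast})$ range over a space of dimension $3N_K$, where $N_K$ is the number of faces. But (HDG:1) tested on $K$ imposes only $12$ conditions. Concretely, take $K=(0,1)^3$, $k=1$, $F_K^\ast=\{x_3=0\}$, $u_h=x_1$, $\widehat u_h=\frac12$ on $\{x_1=0\}$ and $\{x_1=1\}$, and $\widehat u_h=x_1$ on the other four faces. Then for every ${\bm w}_h=(w_1,w_2,w_3)\in(\mathcal{P}^1(K))^3$,
\begin{align*}
({\bm q}_h,{\bm w}_h)_K=(u_h,\nabla\cdot{\bm w}_h)_K-\langle\widehat u_h,{\bm w}_h\cdot{\bm n}\rangle_{\partial K}
=-\int_K w_1\,dx+\frac{1}{2}\Big(\int_{\{x_1=0\}}w_1\,ds+\int_{\{x_1=1\}}w_1\,ds\Big)=0,
\end{align*}
by exactness of the trapezoidal rule in $x_1$. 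Hence ${\bm q}_h|_K=0$ and $u_h=\widehat u_h$ on $F_K^\ast$, so the right-hand side vanishes, while $\|\nabla u_h\|_{L^2(K)}=1$. So for HDG-1 on such elements the estimate itself fails, and neither your argument nor the citation can supply it there.

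\textbf{How to fix it on simplices.} On simplices the statement is true, and you can repeat your Raviart--Thomas argument with the right degrees of freedom. $(\mathcal{P}^k(K))^3$ is unisolvent for moments against $(\mathcal{P}^{k-1}(K))^3$ together with normal moments against $\mathcal{P}^k(F)$ on the three faces $F\neq F_K^\ast$.

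To see this, let $a_0$ be the vertex opposite $F_K^\ast$, and let $\lambda_i$ be the barycentric coordinate vanishing on the face opposite $a_i$. If all these moments vanish, vanishing normal traces force ${\bm w}=\sum_{i=1}^3\lambda_i s_i(a_i-a_0)$ with $s_i\in\mathcal{P}^{k-1}(K)$. Testing with the dual basis of $\{a_i-a_0\}$ weighted by $s_i$ gives $\sum_i\int_K\lambda_i s_i^2\,dx=0$, hence ${\bm w}=0$.

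Now lift $h_K^{-1}(\widehat u_h-u_h)$ from $\partial K\setminus F_K^\ast$ with zero interior moments. This bounds $h_K^{-1/2}\|u_h-\widehat u_h\|_{L^2(\partial K\setminus F_K^\ast)}$ by $C(\|{\bm q}_h\|_{L^2(K)}+\|\sqrt{\tau}(u_h-\widehat u_h)\|_{L^2(F_K^\ast)})$. Testing with $\nabla u_h$ then finishes the proof exactly as in your hybrid mixed DG case.
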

\begin{proof}
The result of this lemma follows from \cite[Theorem 2.3]{cfq2018}, \cite[Lemma 3.2]{qs2016}, and \cite[Lemma 3.1]{es2010} for HDG-1, HDG-2, and hybrid mixed DG methods, respectively.
\end{proof}

\begin{lemma}[Local energy error estimates]\label{lee}
Let $\Omega_0\subset\Omega_1\subset\Omega$ satisfy $\rho={\rm dist}(\partial\Omega_0,\partial\Omega_1\backslash\partial\Omega)\geq rh$ for $r>1$ sufficiently large.

(1) Let $u\in H_0^1(\Omega)\cap H^2(\Omega)$ and $u_h^{{\rm DG}}\in V_h^{\rm DG}$ be the solutions of problems (\ref{model}) and (\ref{IPDG}), then we have
\begin{align}
&\Big(\sum_{F\in\mathcal{E}_h}\Big(h_F^{-1}\|[u-u_h^{{\rm DG}}]\|^2_{L^2(F\cap\Omega_0)}+h_F\|\{\nabla(u-u_h^{{\rm DG}})\cdot{\bm n}\}\|^2_{L^2(F\cap\Omega_0)}\Big)\Big)^{1/2}\nonumber\\
&+\|\nabla(u-u_h^{{\rm DG}})\|_{L_h^2(\Omega_0)}\leq Ch|u-\Pi_ku|_{H_h^2(\Omega_1)}+C\rho^{-1}\|u-u_h^{{\rm DG}}\|_{L^2(\Omega_1)},
\end{align}
where $\Pi_k$ denotes the standard $L^2$-orthogonal projection onto $V_h^{{\rm DG}}$ on each $K\in\mathcal{T}_h$.

(2) Let $({\bm q},u)\in (H^1(\Omega))^3\times H_0^1(\Omega)\cap H^2(\Omega)$ and $({\bm q}_h,u_h,\widehat{u}_h)\in{\bm W}_h\times V_h\times \widehat{V}_h$ be the solutions of problems (\ref{mixed}) and (\ref{HDG}), then we have
\begin{align}
&\|{\bm q}-{\bm q}_h\|_{L^2(\Omega_0)}+\|\nabla(u-u_h)\|_{L_h^2(\Omega_0)}+\Big(\sum_{K\in\mathcal{T}_h}\|\sqrt{\tau}(\mathcal{L}u_h-\widehat{u}_h)\|^2_{L^2(\partial K\cap\Omega_0)}\Big)^{1/2}\nonumber\\
\leq& Ch\rho^{-1}\Big(\|{\bm q}-{\bm q}_h\|_{L^2(\Omega_1)}+\Big(\sum_{K\in\mathcal{T}_h}\|\sqrt{\tau}(\mathcal{L}u_h-\widehat{u}_h)\|^2_{L^2(\partial K\cap\Omega_1)}\Big)^{1/2}\Big)\\
&+C\rho^{-1}\|u-u_h\|_{L^2(\Omega_1)}+C\big(\|\nabla(u-\Pi_Vu)\|_{L_h^2(\Omega_1)}+h\|\nabla({\bm q}-\Pi_W{\bm q})\|_{L_h^2(\Omega_1)}\big),\nonumber
\end{align}
where $\Pi_V$ and $\Pi_W$ denote the standard $L^2$-orthogonal projections onto $V(K)$ and ${\bm W}(K)$ on each $K\in\mathcal{T}_h$.
\end{lemma}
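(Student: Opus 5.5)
The plan is the standard local energy argument, run on a cutoff of the error. We treat (1) and (2) separately. In each case we first write the error as an interpolation part plus a discrete part, and then test the Galerkin orthogonality from Lemma \ref{dfp} against a discrete approximation of a cutoff times the discrete error. Let $\omega\in C_0^\infty$ satisfy $\omega\equiv 1$ on $\Omega_0$, ${\rm supp}\,\omega\subset\Omega_1$ (up to $\partial\Omega$), and $|\nabla^j\omega|\le C\rho^{-j}$. The requirement $\rho\ge rh$ with $r$ large guarantees that an $O(h)$ neighbourhood of ${\rm supp}\,\omega$, together with an intermediate domain, still lies inside $\Omega_1$.

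For the IPDG estimate (1), set $e=u-u_h^{\rm DG}=\eta+\xi_h$ with $\eta=u-\Pi_ku$ and $\xi_h=\Pi_ku-u_h^{\rm DG}\in V_h^{\rm DG}$. Let $\|\cdot\|_{h,D}$ denote the local DG energy seminorm, i.e.\ the gradient plus jump plus averaged-flux terms appearing on the left-hand side of the statement. The steps are as follows.
\begin{enumerate}
\item Use coercivity of $a_h$ for $\lambda$ large on a weighted form: bound $\|\omega\xi_h\|^2_{h}$ by $a_h(\xi_h,\omega^2\xi_h)$ plus commutator terms. These commutator terms involve $\nabla\omega$ and are controlled by $C\rho^{-1}\|\xi_h\|_{L^2(\Omega_1)}\|\omega\xi_h\|_h+C\rho^{-2}\|\xi_h\|_{L^2(\Omega_1)}^2$.
\item Use orthogonality, $a_h(e,v_h)=0$, to write
\begin{align*}
a_h(\xi_h,\omega^2\xi_h)=a_h(\xi_h,\omega^2\xi_h-\Pi_k(\omega^2\xi_h))-a_h(\eta,\Pi_k(\omega^2\xi_h)).
\end{align*}
\item Apply the superapproximation estimate $\|\omega^2\xi_h-\Pi_k(\omega^2\xi_h)\|_{h,K}\le Ch\rho^{-1}\|\xi_h\|_{H^1(K)}+Ch\rho^{-2}\|\xi_h\|_{L^2(K)}$. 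Combined with an inverse inequality, this makes the first term at most $C\rho^{-1}\|\xi_h\|_{L^2(\Omega_1)}\times$ (energy of $\xi_h$ on an intermediate domain).
\item Bound the second term by continuity of $a_h$ on $H^2$-broken spaces, using trace inequalities. This gives $Ch|\eta|_{H^2_h(\Omega_1)}\|\omega\xi_h\|_h$ plus lower-order terms.
\end{enumerate}
Kickback and Young's inequality leave a term $Ch\rho^{-1}\|\xi_h\|_{h,\Omega_1'}$ on an intermediate domain. We remove it by the usual iteration over a sequence of nested domains, or equivalently by an inverse estimate $h\|\xi_h\|_{h}\le C\|\xi_h\|_{L^2}$ on quasi-uniform meshes. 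This yields the claim with $\|\xi_h\|_{L^2}\le\|e\|_{L^2}+\|\eta\|_{L^2}$ and $\|\eta\|_{L^2}\le Ch\cdot h|\eta|_{H^2}$.

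For the HDG estimate (2), we argue the same way in the mixed setting. Decompose ${\bm q}-{\bm q}_h$, $u-u_h$ and $u-\widehat u_h$ using $\Pi_W$, $\Pi_V$ and $\Pi_{\widehat V}$. Then test $\mathcal B$ in (\ref{eHDG}) minus its analogue in Lemma \ref{dfp} with projections of $(\omega^2{\bm\varepsilon}_q,-\omega^2\varepsilon_u,-\omega^2\widehat\varepsilon_u)$, where the $\varepsilon$'s are the discrete error components. The diagonal of $\mathcal B$ gives $\|\omega{\bm\varepsilon}_q\|^2+\|\omega\sqrt\tau(\mathcal L\varepsilon_u-\widehat\varepsilon_u)\|^2$. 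The gradient term $\|\nabla\varepsilon_u\|$ is then recovered elementwise from Lemma \ref{inequality}. Commutators and superapproximation produce $Ch\rho^{-1}$ times the full flux and stabilization energies on $\Omega_1$ (which explains the first term on the right), together with $C\rho^{-1}\|u-u_h\|_{L^2(\Omega_1)}$. The projection errors produce $\|\nabla(u-\Pi_Vu)\|+h\|\nabla({\bm q}-\Pi_W{\bm q})\|$.

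The main obstacle is the superapproximation and commutator step for the hybrid variables. Here $\widehat\varepsilon_u$ lives only on faces and $\mathcal L=\Pi_{\widehat V}$ in HDG-2, so one must check that multiplying by $\omega^2$ and reprojecting costs only $h\rho^{-1}$ in the $\tau$-weighted face norm. For HDG-1, $\tau$ vanishes off $F_K^\ast$, so this control has to be routed through Lemma \ref{inequality}. On polyhedral meshes this should be done on the sub-triangulation.
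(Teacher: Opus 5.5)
The paper gives no argument of its own here. Part (1) is quoted from \cite[Lemma 4.1]{cc2005} and part (2) from \cite[Lemma 3.4]{clq2026}. Your outline is the standard local energy argument behind such results: cutoff $\omega$, Galerkin orthogonality, superapproximation, kick-back, inverse estimates on the quasi-uniform mesh, and, for HDG, elementwise recovery of the gradient from Lemma \ref{inequality}. So you are reconstructing the cited proofs rather than taking a different route. The citation spares the authors the bookkeeping, part of which you leave open.

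One step does not close as you wrote it. With the unweighted superapproximation bound you quote, $a_h(\xi_h,\omega^2\xi_h-\Pi_k(\omega^2\xi_h))$ is bounded by $Ch\rho^{-1}\|\xi_h\|_{h,\Omega'}^2+C\rho^{-2}\|\xi_h\|_{L^2(\Omega_1)}^2$. After kick-back and square roots, the leftover is therefore $C(h/\rho)^{1/2}\|\xi_h\|_{h,\Omega'}$, not $Ch\rho^{-1}\|\xi_h\|_{h,\Omega'}$.

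An inverse estimate applied to this leftover gives only $C(h\rho)^{-1/2}\|\xi_h\|_{L^2}$. That misses the claimed $C\rho^{-1}\|\xi_h\|_{L^2}$ by a factor $(\rho/h)^{1/2}$, so your two remedies are not equivalent. Rewriting the term as $C\rho^{-1}\|\xi_h\|_{L^2}$ times the energy on $\Omega'$, as in your Step 3, is worse still, since it throws away the factor $h/\rho$ entirely.

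There are two ways to fix this:
\begin{itemize}
\item Apply the local bound twice, on $\Omega_0\subset\Omega'\subset\Omega''\subset\Omega_1$ with consecutive separations $\rho/3$. This turns the leftover into $C(h/\rho)\|\xi_h\|_{h,\Omega''}$, which an inverse estimate does remove.
\item Use the sharper bound $Ch\rho^{-1}(\|\omega\nabla\xi_h\|_{L^2(K)}+\rho^{-1}\|\xi_h\|_{L^2(K)})$. It follows from $\nabla(\omega^2)=2\omega\nabla\omega$ and the elementwise vanishing of $\nabla^{k+1}\xi_h$, and it can be kicked back directly.
\end{itemize}
The same care is what produces the \emph{linear} factor $h\rho^{-1}$ in front of the flux and stabilization energies in (2).

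The hybrid step you flag is milder than you expect. Both ${\bm\varepsilon}_q\cdot{\bm n}|_F$ and $\tau(\mathcal{L}\varepsilon_u-\widehat\varepsilon_u)|_F$ lie in $\mathcal{P}^k(F)$ for all three methods. Hence the $\Pi_{\widehat V}$-projection error of $\omega^2\widehat\varepsilon_u$ drops out of $\mathcal{B}({\bm\varepsilon}_q,\varepsilon_u,\widehat\varepsilon_u;\cdot)$ entirely.

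The only face commutator left is $\Pi_{\widehat V}(\omega^2\varepsilon_u)-\omega^2\Pi_{\widehat V}\varepsilon_u$ for HDG-2. Using $|\omega^2-\omega^2(x_F)|\le Ch\rho^{-1}(\max_F\omega+h\rho^{-1})$ on each face, it costs at most $C\rho^{-1}\|\varepsilon_u\|_{L^2(K)}(\|\omega\sqrt{\tau}(\mathcal{L}\varepsilon_u-\widehat\varepsilon_u)\|_{L^2(\partial K)}+h\rho^{-1}\|\sqrt{\tau}(\mathcal{L}\varepsilon_u-\widehat\varepsilon_u)\|_{L^2(\partial K)})$. Both pieces are absorbed as above.
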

\begin{proof}
The local energy error estimates presented in (1) and (2) follow from \cite[Lemma 4.1]{cc2005} and \cite[Lemma 3.4]{clq2026} respectively.
\end{proof}

\begin{lemma}[Energy error estimates]\label{leee}
(1) Let $u\in H_0^1(\Omega)\cap H^2(\Omega)$ and $u_h^{{\rm DG}}\in V_h^{\rm DG}$ be the solutions of problems (\ref{model}) and (\ref{IPDG}), then we have
 \begin{align}
 &\|\nabla(u-u_h^{{\rm DG}})\|_{L_h^2(\Omega)}+\Big(\sum_{F\in\mathcal{E}_h}h_F^{-1}\|[u-u_h^{{\rm DG}}]\|^2_{L^2(F)}\Big)^{1/2}\nonumber\\
 \leq&C\|\nabla(u-\Pi_ku)\|_{L_h^2(\Omega)}+C\Big(\sum_{F\in\mathcal{E}_h}h_F\|\{\nabla(u-\Pi_ku)\cdot{\bm n}\}\|^2_{L^2(F)}\Big)^{1/2},
 \end{align}
 where $\Pi_k$ denotes the standard $L^2$-orthogonal projection onto $V_h^{{\rm DG}}$ on each $K\in\mathcal{T}_h$.

(2) Let $({\bm q},u)\in (H^1(\Omega))^3\times H_0^1(\Omega)\cap H^2(\Omega)$ and $({\bm q}_h,u_h,\widehat{u}_h)\in{\bm W}_h\times V_h\times \widehat{V}_h$ be the solutions of problems (\ref{mixed}) and (\ref{HDG}), then we have
\begin{align}
&\|{\bm q}-{\bm q}_h\|_{L^2(\Omega)}+\|\nabla(u-u_h)\|_{L^2_h(\Omega)}+\Big(\sum_{K\in\mathcal{T}_h}\|\sqrt{\tau}(\mathcal{L}u_h-\widehat{u}_h)\|^2_{L^2(\partial K)}\Big)^{1/2}\nonumber\\
\leq&C\Big(\|\nabla(u-\Pi_Vu)\|_{L_h^2(\Omega)}+\|{\bm q}-\Pi_W{\bm q}\|_{L^2(\Omega)}\Big)
+C\Big(\sum_{K\in\mathcal{T}_h}h_K\|({\bm q}-\Pi_W{\bm q})\cdot{\bm n}\|_{L^2(\partial K)}^2\Big)^{1/2},
\end{align}
where $\Pi_V$ and $\Pi_W$ denote the standard $L^2$-orthogonal projections onto $V(K)$ and ${\bm W}(K)$ on each $K\in\mathcal{T}_h$.
\end{lemma}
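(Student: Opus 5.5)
The proof follows the standard energy argument: a discrete coercivity property, Galerkin orthogonality from Lemma \ref{dfp}, and a split of the error into a projection part and a discrete part.

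For part (1), write $u-u_h^{\rm DG}=(u-\Pi_ku)+(\Pi_ku-u_h^{\rm DG})=:\eta+e_h$, with $e_h\in V_h^{\rm DG}$. Define the mesh-dependent energy norm
\[
\|v\|_{\rm DG}^2=\|\nabla v\|_{L_h^2(\Omega)}^2+\sum_{F\in\mathcal{E}_h}h_F^{-1}\|[v]\|_{L^2(F)}^2 .
\]
\begin{itemize}
\item For $\lambda$ large enough, the discrete trace inequality $h_F\|\{\nabla v_h\cdot{\bm n}\}\|_{L^2(F)}^2\leq C\|\nabla v_h\|_{L_h^2(K^+\cup K^-)}^2$ gives coercivity: $a_h(v_h,v_h)\geq c\|v_h\|_{\rm DG}^2$ for all $v_h\in V_h^{\rm DG}$. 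On polyhedral meshes, this inverse trace inequality holds through the shape-regular sub-triangulation.
\item Lemma \ref{dfp} together with (\ref{IPDG}) gives orthogonality, $a_h(u-u_h^{\rm DG},v_h)=0$. Hence $c\|e_h\|_{\rm DG}^2\leq a_h(e_h,e_h)=-a_h(\eta,e_h)$.
\item Expand $a_h(\eta,e_h)$ term by term. The volume term is bounded by $\|\nabla\eta\|_{L_h^2}\|\nabla e_h\|_{L_h^2}$. The term $\int_F\{\nabla\eta\cdot{\bm n}\}[e_h]$ is bounded by $(h_F\|\{\nabla\eta\cdot{\bm n}\}\|^2)^{1/2}(h_F^{-1}\|[e_h]\|^2)^{1/2}$. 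The term $\int_F\{\nabla e_h\cdot{\bm n}\}[\eta]$ and the penalty term both involve $h_F^{-1/2}\|[\eta]\|_{L^2(F)}$, combined with the discrete trace inequality for $e_h$.
\end{itemize}
The only remaining issue is that the right-hand side of the statement does not contain jumps of $\eta$. To handle this, bound $h_F^{-1}\|[\eta]\|_{L^2(F)}^2$ by the elementwise trace inequality
\[
h_F^{-1}\|\eta\|_{L^2(\partial K)}^2\leq C\bigl(h_K^{-2}\|\eta\|_{L^2(K)}^2+\|\nabla\eta\|_{L^2(K)}^2\bigr),
\]
and then apply the Poincaré-type estimate $\|\eta\|_{L^2(K)}\leq Ch_K\|\nabla\eta\|_{L^2(K)}$. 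This estimate holds because $\Pi_k$ preserves constants, so $\eta$ has zero mean on $K$. Since $u\in H^1_0$, $[u]=0$ on all faces, so $[\eta]=-[\Pi_ku]$ and the same bound applies. Finally, the triangle inequality $\|u-u_h^{\rm DG}\|_{\rm DG}\leq\|\eta\|_{\rm DG}+\|e_h\|_{\rm DG}$ gives the result, again bounding $\|\eta\|_{\rm DG}$ by the argument above.

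For part (2), set $\varepsilon_q=\Pi_W{\bm q}-{\bm q}_h$, $\varepsilon_u=\Pi_Vu-u_h$, and $\varepsilon_{\widehat u}=\Pi_{\widehat V}u-\widehat u_h$, which vanishes on boundary faces.
\begin{itemize}
\item Subtract (\ref{eHDG}) from the consistency identity in Lemma \ref{dfp} to obtain the error equation.
\item Test with $({\bm w}_h,v_h,\widehat v_h)=(\varepsilon_q,-\varepsilon_u,-\varepsilon_{\widehat u})$ and use the antisymmetry of the off-diagonal terms in $\mathcal{B}$. This yields
\[
\|\varepsilon_q\|^2+\|\sqrt\tau(\mathcal{L}\varepsilon_u-\varepsilon_{\widehat u})\|_{\partial\mathcal{T}_h}^2 = \text{(projection residuals)} .
\]
\item By the orthogonality of $\Pi_V$, $\Pi_W$ and $\Pi_{\widehat V}$ (since $\nabla\cdot{\bm W}(K)\subset V(K)$ and ${\bm w}_h\cdot{\bm n}|_F\in\mathcal{P}^k(F)$), most cross terms vanish.
\item The residuals that remain are of the form $({\bm q}-\Pi_W{\bm q},\varepsilon_q)$ and $\langle({\bm q}-\Pi_W{\bm q})\cdot{\bm n},\varepsilon_u-\varepsilon_{\widehat u}\rangle$, plus the stabilization term with $\tau(\mathcal{L}(u-\Pi_Vu)-(u-\Pi_{\widehat V}u))$.
\end{itemize}
Bound the boundary pairing by $(h_K\|({\bm q}-\Pi_W{\bm q})\cdot{\bm n}\|^2)^{1/2}(h_K^{-1}\|\varepsilon_u-\varepsilon_{\widehat u}\|^2)^{1/2}$. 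Lemma \ref{inequality}, which applies because the discrete errors satisfy the homogeneous version of (\ref{HDG:1}) up to projection residuals, then gives
\[
\|\nabla\varepsilon_u\|_{L^2(K)}+h_K^{-1/2}\|\varepsilon_u-\varepsilon_{\widehat u}\|_{L^2(\partial K)}\lesssim \|\varepsilon_q\|+\|\sqrt\tau(\cdot)\| .
\]
This allows absorption via Young's inequality. The stabilization residual is controlled by $\tau=h_K^{-1}$ and the trace-Poincaré argument of part (1), giving $C\|\nabla(u-\Pi_Vu)\|_{L^2(K)}$.

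The main obstacle is this application of Lemma \ref{inequality} to error quantities rather than to discrete solutions. Equation (\ref{HDG:1}) for the projected errors carries a residual $({\bm q}-\Pi_W{\bm q},{\bm w}_h)$. I would first repeat the local argument of that lemma with this extra term, which contributes $\|{\bm q}-\Pi_W{\bm q}\|$. Alternatively, one can simply cite the energy estimates of \cite{cfq2018,qs2016,es2010}, as is done for Lemma \ref{inequality}.
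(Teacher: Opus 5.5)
Your proposal is correct. It is a self-contained proof, whereas the paper gives no argument for this lemma: it cites \cite[Theorem 4.17]{de2011} for (1) and the method-specific analyses \cite{es2010, cfq2018, qs2016} for (2). What you write is essentially the standard energy argument behind those citations.

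For (1), the cited Céa-type bound is naturally stated in a norm that also contains $\sum_F h_F^{-1}\|[u-\Pi_ku]\|^2_{L^2(F)}$. Your trace--Poincaré step, using that $u-\Pi_ku$ has zero mean on each $K$, is exactly what reduces this to the right-hand side as stated.

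For (2), a single energy identity combined with Lemma \ref{inequality} treats the hybrid mixed DG, HDG-1 and HDG-2 methods uniformly, instead of appealing to three separate analyses. The price is that it rests on Lemma \ref{inequality}, which is itself imported from those same papers.

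One simplification: the obstacle you flag does not exist. Since $\Pi_W,\Pi_V,\Pi_{\widehat V}$ are $L^2$ projections, and ${\bm w}_h\in{\bm W}(K)$, $\nabla\cdot{\bm W}(K)\subset V(K)$, ${\bm w}_h\cdot{\bm n}|_F\in\mathcal{P}^k(F)$, the triple $(\varepsilon_q,\varepsilon_u,\varepsilon_{\widehat u})$ satisfies the homogeneous form of (\ref{HDG:1}) exactly, with no residual. The paper uses precisely this identity in Appendix \ref{apendixC:3}. So Lemma \ref{inequality} applies to the projected errors verbatim, and there is no need to redo its proof.

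Similarly, some of the residuals you list vanish:
\begin{itemize}
\item The residual $({\bm q}-\Pi_W{\bm q},\varepsilon_q)_{\mathcal{T}_h}$ is zero.
\item Integrating $(\nabla\cdot(\Pi_W{\bm q}-{\bm q}),\varepsilon_u)_{\mathcal{T}_h}$ by parts produces a volume term $({\bm q}-\Pi_W{\bm q},\nabla\varepsilon_u)_{\mathcal{T}_h}$. This also vanishes, because $\nabla V(K)\subset{\bm W}(K)$ for all three pairs; for HDG-2 this holds since $\nabla\mathcal{P}^{k+1}(K)\subset(\mathcal{P}^k(K))^3$.
\end{itemize}
Only the face pairing $\langle({\bm q}-\Pi_W{\bm q})\cdot{\bm n},\varepsilon_u-\varepsilon_{\widehat u}\rangle_{\partial\mathcal{T}_h}$ and the stabilization residual survive. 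Your bounds for both, and the final triangle inequalities, are fine.
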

\begin{proof}
The result in (1) can be found in \cite[Theorem 4.17]{de2011}, while the results in (2) for the hybrid mixed DG, HDG-1 and HDG-2 methods can be found in
\cite{es2010, cfq2018, qs2016}, respectively.
\end{proof}

\begin{lemma}[Inverse inequality]\label{inv}
Let $D$ be an element of $\mathcal{T}_h$ or a face of $\mathcal{E}_h$, then the following estimates hold for $0\leq j\leq i\leq l+1$ and
$1\leq p, q\leq \infty$:
\begin{align}
|v_h|_{W^{i,p}(D)}\leq Ch_D^{-(i-j)+\frac{d_D}{p}-\frac{d_D}{q}}|v_h|_{W^{j,q}(D)},\quad \forall v_h\in \mathcal{P}^l(D),
\end{align}
where $h_D$ and $d_D$ denote the diameter and dimension of the domain $D$.
\end{lemma}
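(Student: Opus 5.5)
The plan is to reduce the estimate to a fixed reference configuration by an affine (or, on polyhedral elements, sub-simplex-wise affine) change of variables, and then use equivalence of norms on the finite-dimensional space $\mathcal{P}^l$.

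\textbf{Simplicial or face case.} First assume $D$ is a simplex: a tetrahedron when $d_D=3$, a triangle when $d_D=2$. Shape regularity and quasi-uniformity of $\mathcal{T}_h$ give an affine map $F_D:\widehat D\to D$, $F_D(\widehat x)=B_D\widehat x+b_D$. Its Jacobian satisfies $\|B_D\|\le Ch_D$, $\|B_D^{-1}\|\le Ch_D^{-1}$ and $|\det B_D|\simeq h_D^{d_D}$. For $\widehat v=v_h\circ F_D\in\mathcal{P}^l(\widehat D)$, the chain rule and the change of variables give
\begin{align*}
|v_h|_{W^{i,p}(D)}\le C h_D^{-i+\frac{d_D}{p}}|\widehat v|_{W^{i,p}(\widehat D)},\qquad |\widehat v|_{W^{j,q}(\widehat D)}\le Ch_D^{j-\frac{d_D}{q}}|v_h|_{W^{j,q}(D)} .
\end{align*}
On the reference element I will show $|\widehat v|_{W^{i,p}(\widehat D)}\le C|\widehat v|_{W^{j,q}(\widehat D)}$ for all $\widehat v\in\mathcal{P}^l(\widehat D)$. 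Both sides are seminorms on $\mathcal{P}^l$. The right-hand side vanishes exactly on $\mathcal{P}^{j-1}$, and the left-hand side also vanishes on $\mathcal{P}^{j-1}$ because $i\ge j$. We may therefore pass to the finite-dimensional quotient $\mathcal{P}^l/\mathcal{P}^{j-1}$. On that quotient the right-hand side is a norm, the left-hand side is a seminorm, and equivalence of norms in finite dimensions gives the bound. When $i=l+1$ the left-hand side is zero and the estimate is trivial. Chaining the three inequalities yields the exponent $-(i-j)+\frac{d_D}{p}-\frac{d_D}{q}$.

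\textbf{Polyhedral elements and polygonal faces.} Now let $D$ be a general polyhedral element, or a polygonal face, with its shape-regular sub-triangulation $\{D_m\}_{m=1}^{M}$, $M\le\mathcal{N}$, and $h_{D_m}\simeq h_D$. This is the main obstacle: one cannot simply sum the simplex estimate over the $D_m$. Although $\sum_m|v_h|_{W^{j,q}(D_m)}\le M|v_h|_{W^{j,q}(D)}$ holds for the lower bound, the sub-elements may have uneven sizes and we need control across them.

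I would bypass this by comparing with an enclosing or contained ball. Shape regularity of the sub-triangulation lets me choose a sub-simplex $D_1$ with $|D_1|\simeq h_D^{d_D}$ and a ball $B\supset D$ of radius $\simeq h_D$. For a polynomial, norms over $B$ and over $D_1$ are equivalent with constants depending only on the ratio $\mathrm{radius}(B)/\mathrm{inradius}(D_1)$, which is bounded. This follows from scaling together with equivalence of norms on $\mathcal{P}^l$ for a fixed pair of domains, made uniform by compactness of the family of admissible shapes, which is a standard argument. This gives
\begin{align*}
|v_h|_{W^{i,p}(D)}\le |v_h|_{W^{i,p}(B)}\le C h_D^{-(i-j)+\frac{d_D}{p}-\frac{d_D}{q}}|v_h|_{W^{j,q}(D_1)}\le C h_D^{-(i-j)+\frac{d_D}{p}-\frac{d_D}{q}}|v_h|_{W^{j,q}(D)}.
\end{align*}
The middle step applies the simplex case on $D_1$, after first passing from $B$ to $D_1$ via the polynomial norm equivalence. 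All constants depend only on $l$, the shape-regularity constants, and $\mathcal{N}$.
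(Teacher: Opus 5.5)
Your proof is correct. For simplices and triangular faces it is the standard argument behind the result the paper cites, \cite[Lemma 4.5.3]{bs2008}: affine scaling to a reference element plus equivalence of (semi)norms on the finite-dimensional space $\mathcal{P}^l$. Passing to the quotient by $\mathcal{P}^{j-1}$ is the right way to get the seminorm version. A cosmetic point: for a face in $\mathbb{R}^3$ the matrix $B_D$ is $3\times 2$, so read $|\det B_D|$ as $(\det B_D^{T}B_D)^{1/2}$, or rotate the face into $\mathbb{R}^2$ first.

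The paper's proof consists only of that citation, which strictly covers affine-equivalent elements. Your polyhedral step is therefore an addition rather than a different route.

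The obstacle you describe there is not real under your own hypothesis $h_{D_m}\simeq h_D$. For $p<\infty$ you have $|v_h|^p_{W^{i,p}(D)}=\sum_m|v_h|^p_{W^{i,p}(D_m)}$. Each term is bounded by the simplex estimate with $h_{D_m}$ replaced by $h_D$; two-sided comparability handles either sign of the exponent. Moreover $|v_h|_{W^{j,q}(D_m)}\le|v_h|_{W^{j,q}(D)}$ trivially, so summing over at most $\mathcal{N}$ sub-simplices gives the bound (take a maximum for $p=\infty$). This sub-element-wise scaling and summation is exactly what the paper does in its proof of Lemma \ref{tra}.

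Your ball comparison is nonetheless valid and buys something. It needs only one sub-simplex with inradius $\simeq h_D$ inside $D$, not comparability of all sub-elements. So it would survive sub-triangulations containing some simplices much smaller than $h_D$, where direct summation fails.
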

\begin{proof}
This lemma follows from \cite[Lemma 4.5.3]{bs2008}.
\end{proof}
\begin{lemma}[Trace inequality]\label{tra}
For each $K\in\mathcal{T}_h$ and $1<p<\infty$, we have
\begin{align}
\|v\|_{L^p(\partial K)}\leq C\left(h_K^{-\frac{1}{p}}\|v\|_{L^p(K)}+h_K^{\beta}|v|_{W^{\frac{1}{p}+\beta,p}(K)}\right),\quad \forall v\in W^{\frac{1}{p}+\beta,p}(K),
\end{align}
where $\beta\in (0,1-\frac{1}{p}]$ is a real number and $C$ is a positive constant depending on $\beta$ and $p$. For $\beta\in (0,1-\frac{1}{p})$,
the seminorm of $W^{\frac{1}{p}+\beta,p}(K)$ is defined as
\begin{align*}
|v|_{W^{\frac{1}{p}+\beta,p}(K)}^p=\int_K\int_K\frac{|v(x)-v(y)|^p}{|x-y|^{4+p\beta}} dxdy.
\end{align*}
\end{lemma}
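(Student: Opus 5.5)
We reduce the trace inequality on $K$ to a fixed reference configuration by scaling, prove it there with the fractional trace theorem, and pull it back.

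\textbf{Reduction to a reference element.} For a shape-regular simplex (or, for a polyhedral $K$, each simplex of the shape-regular sub-triangulation), let $\widehat K$ be a reference element of unit size and $F_K:\widehat K\to K$ the affine map. Since the number of sub-elements is at most $\mathcal N$ and every face of $K$ is a union of faces of sub-simplices, it suffices to prove the bound on each sub-simplex $T$, with $\partial K\subset\bigcup_T\partial T$. We have $\|v\|_{L^p(T)}\le\|v\|_{L^p(K)}$ and $|v|_{W^{\frac1p+\beta,p}(T)}\le|v|_{W^{\frac1p+\beta,p}(K)}$, the latter because the double integral over $T\times T$ is dominated by the one over $K\times K$. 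Summing over at most $\mathcal N$ sub-elements costs only a constant.

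\textbf{Estimate on $\widehat K$.} For $s=\frac1p+\beta>\frac1p$, the standard trace theorem gives $\|\hat v\|_{L^p(\partial\widehat K)}\le C\|\hat v\|_{W^{s,p}(\widehat K)}$, with $W^{s,p}$ the Sobolev--Slobodeckij space. When $\beta=1-\frac1p$, we have $s=1$ and this is the classical $W^{1,p}$ trace theorem. The full norm is $\|\hat v\|_{L^p}+|\hat v|_{W^{s,p}}$, so no Poincaré-type argument is needed.

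\textbf{Scaling.} Under $x=F_K(\hat x)$ in $\mathbb{R}^3$ we have:
\begin{itemize}
\item $\|v\|_{L^p(\partial T)}^p\sim h^2\|\hat v\|^p_{L^p(\partial\widehat K)}$;
\item $\|v\|_{L^p(T)}^p\sim h^3\|\hat v\|^p_{L^p}$;
\item $\int_T\int_T|v(x)-v(y)|^p|x-y|^{-3-sp}\,dx\,dy\sim h^{6-3-sp}|\hat v|^p_{W^{s,p}}=h^{2-p\beta}|\hat v|^p$, using $3+sp=4+p\beta$.
\end{itemize}
The third line is consistent with the paper's exponent $4+p\beta$. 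Combining these gives
\[
\|v\|_{L^p(\partial T)}\le C\big(h^{\frac{2-3}{p}}\|v\|_{L^p(T)}+h^{\beta}|v|_{W^{s,p}(T)}\big),
\]
which is the claimed inequality. For $\beta=1-\frac1p$ the same computation applies with the $W^{1,p}$ seminorm, which scales as $h^{2/p}\,h^{1-\frac1p-1}\cdot h$, again giving the factor $h^{\beta}$.

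\textbf{Main obstacle.} The delicate point is uniformity of constants in the polyhedral case. Shape regularity of the sub-triangulation makes the Jacobians of $F_K$ uniformly comparable to $h_K$, and quasi-uniformity makes $h_T\sim h_K$. Together with the dependence of the reference trace constant on $\beta$ and $p$ only, this yields a constant $C$ depending only on $\beta$, $p$, $\mathcal N$ and the shape-regularity constants.
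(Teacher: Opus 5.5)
Your proposal is correct and takes essentially the same route as the paper: you restrict to the boundedly many sub-simplices of $K$, apply the standard $W^{\frac1p+\beta,p}$ trace theorem on the reference element, and scale back using $\|v\|^p_{L^p(\partial T)}\sim h^2\|\hat v\|^p_{L^p(\partial\widehat K)}$, $\|v\|^p_{L^p(T)}\sim h^3\|\hat v\|^p_{L^p(\widehat K)}$ and $|v|^p_{W^{\frac1p+\beta,p}(T)}\sim h^{2-p\beta}|\hat v|^p_{W^{\frac1p+\beta,p}(\widehat K)}$. The only slip is cosmetic: for $\beta=1-\frac1p$ the factor should read $h^{\frac2p}\,h^{1-\frac3p}=h^{1-\frac1p}$ (your displayed product $h^{\frac2p}h^{1-\frac1p-1}h$ actually equals $h^{1+\frac1p}$), though your conclusion $h^{\beta}$ is right.
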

\begin{proof}
For each polyhedral element $K$, let $\{K_i\}_{i=1}^{\mathcal{N}_K}$ denote the set of sub-elements, and set $F_i=\partial K\cap\partial K_i$ for $i=1,\cdots,\mathcal{N}_K$, where $\mathcal{N}_K\leq \mathcal{N}$ denotes the number of sub-elements in $K$. Let $\widehat{\mathcal{K}}$ be the reference element of $K_i$, and let the affine mapping be given by $\overline{K}_i\ni x={\bm B}_{K_i}\widehat{x}+{\bm b}_{K_i}$. Then, we define $\widehat{v}_i(\widehat{x})=v_i({\bm B}_{K_i}\widehat{x}+{\bm b}_{K_i})=v_i(x)$ and have
\begin{align*}
\|v_i\|_{L^p(F_i)}\leq Ch^{\frac{2}{p}}_K\|\widehat{v}_i\|_{L^p(\widehat{F}_i)},
\end{align*}
and
\begin{align*}
\|\widehat{v}_i\|_{L^p(\widehat{\mathcal{K}})}\leq Ch_K^{-\frac{3}{p}}\|v_i\|_{L^p(K_i)},\quad |\widehat{v}_i|_{W^{\frac{1}{p}+\beta,p}(\widehat{\mathcal{K}})}
\leq Ch_K^{\beta-\frac{2}{p}}|v_i|_{W^{\frac{1}{p}+\beta,p}(K_i)},
\end{align*}
for $i=1,\cdots,\mathcal{N}_K$, where $v_i$ denotes the restriction of $v$ in $K_i$. The above three estimates, together with the standard trace inequality in the reference element $\widehat{\mathcal{K}}$, lead to
\begin{align*}
\|v\|_{L^p(\partial K)}^p=& \sum_{i=1}^{\mathcal{N}_K}\|v_i\|_{L^p(F_i)}^p\leq C\sum_{i=1}^{\mathcal{N}_K}h_K^2\|\widehat{v}_i\|_{L^p(\widehat{F}_i)}^p\\
\leq& C\sum_{i=1}^{\mathcal{N}_K}h_K^2\|\widehat{v}_i\|_{W^{\frac{1}{p}+\beta,p}(\widehat{\mathcal{K}})}^p\\
\leq& C\sum_{i=1}^{\mathcal{N}_K}\left(h_K^{-1}\|v_i\|_{L^p(K_i)}^p+h_K^{p\beta}|v_i|_{W^{\frac{1}{p}+\beta,p}(K_i)}^p\right)\\
\leq& C\left(h_K^{-1}\|v\|_{L^p(K)}^p+h_K^{p\beta}|v|_{W^{\frac{1}{p}+\beta,p}(K)}^p\right),
\end{align*}
where $C$ is a positive constant depending on $\beta$ and $p$. We conclude the proof.
\end{proof}

\section{Main results for IPDG and HDG methods}\label{sec4}
For any given constant $\epsilon\in (0,\frac{1}{2})$ and $\frac{1}{1-\epsilon}<p<\frac{1}{\epsilon}$, the goal of this paper is to establish optimal discrete $W^{1,p}$ error estimates for the symmetric IPDG method, the hybrid mixed DG method, and two representative HDG methods for the Poisson problem on a convex polyhedral domain $\Omega\subset\mathbb{R}^3$. More specifically, we establish the following theorem for the IPDG method by using the interface Green's functions developed in \cite{lq2026}, the Fubini-type weighted estimates presented in Lemma \ref{fubini1}, and the discrete $W^{1,q}$-$W^{1,p}$ inf-sup stability established in Lemma \ref{inf-sup}, where $q$ is the conjugate number of $p$ such that $\frac{1}{p}+\frac{1}{q}=1$. Note that, in the proof, the optimal discrete $W^{1,p}$ error estimates for $2\leq p<\frac{1}{\epsilon}$ are first established using the interface Green's functions and the Fubini-type weighted estimates. These error estimates are then used to obtain the discrete $W^{1,q}$-$W^{1,p}$ inf-sup stability for $2\leq q<\frac{1}{\epsilon}$. Using this stability result and a suitably constructed dual problem, we extend the optimal error estimates to $\frac{1}{1-\epsilon}<p<2$ and subsequently establish the corresponding inf-sup stability for $\frac{1}{1-\epsilon}<q<2$. The detailed proofs are given in Subsection \ref{sec5:1}.

\begin{theorem}\label{W1p-IPDG}
For any given constant $\epsilon\in(0,\frac{1}{2})$ and $\frac{1}{1-\epsilon}<p<\frac{1}{\epsilon}$,
let $u\in W^{1,p}_0(\Omega)\cap W^{1+\frac{1}{p}+\epsilon,p}(\Omega)$ be the solution of problem (\ref{model}), and let $u_h^{{\rm DG}}$ be its IPDG approximation defined in (\ref{IPDG}). Then, we have
\begin{align}
|||u-u_h^{{\rm DG}}|||_{W_h^{1,p}(\Omega)}\leq C\Big(\|\nabla(u-\Pi_ku)\|_{L_h^p(\Omega)}
+h^{\frac{1}{p}+\epsilon}|\nabla(u-\Pi_ku)|_{W_h^{\frac{1}{p}+\epsilon,p}(\Omega)}\Big),\label{W1p-IPDG:1}
\end{align}
where the mesh-dependent norm $|||v|||_{W_h^{1,p}(\Omega)}$ and seminorm $|\nabla v|_{W_h^{\frac{1}{p}+\epsilon,p}(\Omega)}$
are defined by
\begin{align*}
|||v|||_{W_h^{1,p}(\Omega)}^p=\|\nabla v\|_{L_h^p(\Omega)}^p+\sum_{F\in\mathcal{E}_h}h_F^{1-p}\|[v]\|_{L^p(F)}^p,
\end{align*}
and
\begin{align*}
|\nabla v|_{W_h^{\frac{1}{p}+\epsilon,p}(\Omega)}^p=\sum_{K\in\mathcal{T}_h}|\nabla v|^p_{W^{\frac{1}{p}+\epsilon,p}(K)},\quad
|\nabla v|^p_{W^{\frac{1}{p}+\epsilon,p}(K)}=\int_K\int_K\frac{|v(x)-v(y)|^p}{|x-y|^{4+p\epsilon}}dxdy.
\end{align*}
\end{theorem}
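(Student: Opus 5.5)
The proof has two stages, following the plan announced before the theorem. Write $e=u-u_h^{\rm DG}$, $\eta=u-\Pi_ku$, $\xi_h=\Pi_ku-u_h^{\rm DG}$. In the first stage ($2\le p<\frac1\epsilon$) I would prove the estimate directly by pointwise representation. In the second stage I would deduce the discrete $W^{1,q}$-$W^{1,p}$ inf-sup stability and use it, together with a dual problem, to cover $\frac{1}{1-\epsilon}<p<2$.

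\emph{Stage 1, $2\le p<\frac1\epsilon$.} Fix a point $x_0$ in an element $K$, or on a face $F$ for the jump term. I would write $\partial_j e(x_0)$ and $[e](x_0)$ via regularized Green's functions: a regularized derivative delta $\delta_{x_0}$ supported in $K$ for the gradient, and the interface Green's functions of \cite{lq2026} for the jumps. These are tested against the IPDG approximation of the Green's function. Galerkin orthogonality (Lemma \ref{dfp}) reduces each value to $a_h(\eta, g-g_h)$ plus local terms. I would then perform a dyadic decomposition $\Omega_j=\{x: 2^{-j-1}\le |x-x_0|\le 2^{-j}\}$. On each annulus I would use the local energy estimates (Lemma \ref{lee}(1)) and the regularity bounds (Lemma \ref{model-r}(2), in the form (\ref{model-r:3})) to bound $g-g_h$ with weights. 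The result should be a pointwise bound of the form
\[
|\nabla e(x_0)|+h^{-1}|[e](x_0)|\le C\int_\Omega \sigma_{x_0}(y)\big(|\nabla\eta(y)|+\text{trace/fractional terms}\big)\,dy ,
\]
where the weight $\sigma_{x_0}(y)=h^{\gamma}(h+|x_0-y|)^{-3-\gamma}$ is integrable uniformly in $x_0$. The face traces of $\nabla\eta$ appearing in $a_h$ would be handled with Lemma \ref{tra} with $\beta=\epsilon$. This is exactly what produces the term $h^{\frac1p+\epsilon}|\nabla\eta|_{W_h^{\frac1p+\epsilon,p}}$, since $\frac1p+\epsilon<1$ is needed in the fractional trace. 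Next I would raise this to the $p$-th power and integrate in $x_0$ over elements and faces, which gives $h_F^{1-p}\|[e]\|^p_{L^p(F)}$ after multiplying by $h_F$. A Fubini-type weighted estimate, which I would prove as a lemma, finishes the stage: Hölder with the weight $\sigma_{x_0}$, then swapping the integrals, using $\sup_y\int\sigma_{x_0}(y)\,dx_0\le C$.

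\emph{Inf-sup.} From stage 1 I would derive, for $2\le q<\frac1\epsilon$,
$|||v_h|||_{W_h^{1,q}}\le C\sup_{w_h} a_h(v_h,w_h)/|||w_h|||_{W_h^{1,p}}$.
Given $v_h$, I would choose a smooth-enough $\phi$ with $-\Delta\phi$ realizing the duality with $\nabla v_h$ and the jumps, and let $w_h$ be its IPDG projection. Stage 1 with exponent $q$ bounds $|||\phi-w_h|||_{W^{1,q}}$. Because of the mesh-dependent norm, the duality functional must include the jump part, of the form $\sum_F h_F^{1-q}|[v_h]|^{q-2}[v_h]$, lifted through a discrete right-hand side. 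Interpolation of operators is not usable here, so this construction has to be explicit.

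\emph{Stage 2, $\frac1{1-\epsilon}<p<2$.} Here $q=p/(p-1)\in(2,\frac1\epsilon)$. I would apply the inf-sup above with roles swapped to $\xi_h$:
\[
|||\xi_h|||_{W_h^{1,p}}\le C\sup_{w_h}\frac{a_h(\xi_h,w_h)}{|||w_h|||_{W_h^{1,q}}}=C\sup_{w_h}\frac{a_h(\eta,w_h)}{|||w_h|||_{W_h^{1,q}}}.
\]
By Hölder, $a_h(\eta,w_h)$ is bounded by $\|\nabla\eta\|_{L^p}$ plus face terms with $\{\nabla\eta\cdot\bm n\}$ and $[\eta]$ in $L^p$ against $|||w_h|||_{W^{1,q}}$, using inverse estimates for $\{\nabla w_h\}$ (Lemma \ref{inv}). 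Lemma \ref{tra} converts those face terms into the right-hand side of (\ref{W1p-IPDG:1}). The triangle inequality finishes the proof. This step actually needs the inf-sup for $p<2$ in the $\xi_h$-norm, i.e. the transpose statement. I would obtain it by symmetry of $a_h$ together with a dual problem whose data is the discrete $W^{1,p}$ duality map of $\xi_h$, solved using stage-1 estimates for $q$.

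\emph{Main obstacle.} The hardest step is the weighted pointwise estimate for the jumps, together with getting the fractional trace term with $\epsilon$ loss but no logarithm. On a convex polyhedron only $W^{1+\frac1p+\epsilon,p}$-type regularity of the Green's functions is available, and the dyadic sums must converge geometrically. I would first try to copy the interface Green's function bounds of \cite{lq2026}, replacing $L^\infty$ by $L^p$ weights, and check that the exponent $\gamma$ remains positive precisely when $p<\frac1\epsilon$.
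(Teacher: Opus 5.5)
\textbf{The inf-sup step, on which your Stage 2 rests, has a genuine gap.} You take $\phi$ with $-\Delta\phi$ equal to the $W_h^{1,q}$-duality map of $v_h$, and let $w_h$ be its IPDG approximation. That datum (essentially $-\nabla_h\cdot(|\nabla v_h|^{q-2}\nabla v_h)$ plus jump terms) only lies in a discrete $W^{-1,p}$ space with $p=q/(q-1)\le 2$. So $\phi$ is merely a $W^{1,p}$ function, and Stage 1 with exponent $q$ does not apply to it. Since the datum is not an $L^2$ function, $w_h$ is not even Galerkin-consistent with $\phi$ in the sense of Lemma \ref{dfp}. The bound you actually need, $|||w_h|||_{W_h^{1,p}(\Omega)}\le C|||v_h|||^{q-1}_{W_h^{1,q}(\Omega)}$, is a $W^{1,p}$ stability estimate with $p\le 2$, which is precisely what Stage 2 is meant to produce. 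The argument is therefore circular.

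The missing idea is to apply Stage 1 to $v_h$ itself. Define $\mathcal{L}_hv_h\in V_h^{\rm DG}$ by $(\mathcal{L}_hv_h,\chi_h)=a_h(v_h,\chi_h)$ for all $\chi_h\in V_h^{\rm DG}$. Then $v_h$ is the IPDG approximation of $v\in H_0^1(\Omega)$ with $-\Delta v=\mathcal{L}_hv_h$. Combine Stage 1 with exponent $q\in[2,\frac{1}{\epsilon})$, the regularity bound (\ref{model-r:3}) (this is where the upper bound $q<\frac{1}{\epsilon}$ enters), (\ref{model-r:2}) with $s=1$, and an inverse estimate. This gives
\[
|||v_h|||_{W_h^{1,q}(\Omega)}\le C\|\mathcal{L}_hv_h\|_{W^{-1,q}(\Omega)}=C\sup_{0\neq\phi\in W_0^{1,p}(\Omega)}\frac{a_h(v_h,\mathcal{I}_h^{\rm DG}\phi)}{\|\nabla\phi\|_{L^p(\Omega)}}.
\]
Here $\mathcal{I}_h^{\rm DG}$ is the elementwise $L^2$ projection, which can be inserted because $\mathcal{L}_hv_h\in V_h^{\rm DG}$. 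The stability bound $|||\mathcal{I}_h^{\rm DG}\phi|||_{W_h^{1,p}(\Omega)}\le C\|\nabla\phi\|_{L^p(\Omega)}$ then closes the estimate. No jump duality map is needed here; it only enters the Stage 2 dual problem. Once this inf-sup is available, your Stage 2 is exactly the paper's argument: a discrete dual solution with the $W_h^{1,p}$ duality map of $\xi_h$ as data, symmetry and Galerkin orthogonality, then H\"older with inverse and trace estimates.

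\textbf{Your Stage 1 also claims more than the available tools give.} A bound $|\nabla e(x_0)|\le C\int_\Omega\sigma_{x_0}(y)(\cdots)\,dy$ with the $L^1$ kernel $h^\gamma(h+|x_0-y|)^{-3-\gamma}$ amounts to a pointwise bound on $\nabla(\Gamma-\Gamma_h^{\rm DG})$. The local energy estimates of Lemma \ref{lee}, combined with the annular bounds of Lemma \ref{rG-lee}, only control $\nabla(\Gamma-\Gamma_h^{\rm DG})$ in $L^2$ on dyadic annuli. Indeed, with such a kernel the Schur test would give every admissible $p$ at once, and Stage 2 would be unnecessary.

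What can be proved is the weighted dual bound of Lemma \ref{rG-L1-IPDG}. With $\sigma_z=h/(|x-z|+h)$ and $\frac{3}{p}<s<\gamma+\frac{3}{p}$, the quantity $\|\sigma_z^{-s}\nabla(\Gamma_h^{\rm DG}-\Gamma)\|_{L_h^q(\Omega)}$ plus the face terms is at most $Ch^{-3/p}$, and at most $Ch^{1-3/p}$ for the interface Green's functions. It follows from the annular $L^2$ bounds via $\|\cdot\|_{L^q(\Omega_j)}\le Cd_j^{3(\frac{1}{q}-\frac{1}{2})}\|\cdot\|_{L^2(\Omega_j)}$, which holds only for $q\le 2$; that is the actual reason Stage 1 needs $p\ge 2$. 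H\"older with $\sigma_z^{s}$ and $\sigma_z^{-s}$ then yields $|\nabla\xi_h(z)|^p\le Ch^{-3}\|\sigma_z^s\nabla\eta\|^p_{L_h^p(\Omega)}+\cdots$. Fubini with $\int_\Omega\sigma_z^{ps}(x)\,dz\le Ch^3$ (which requires $ps>3$) finishes the stage.

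Two further corrections. The constant $\gamma$ is the domain-dependent H\"older exponent in (\ref{pe-G:1}) and is unrelated to $\epsilon$. The pointwise representation must be written for the piecewise polynomial $\xi_h$, which $\delta_{h,z}$ reproduces, not for $e$.
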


Based on Theorem \ref{W1p-IPDG}, we derive the following corollary for the IPDG method.

\begin{corollary}\label{W1p-IPDG-2}
For any given constant $\epsilon\in(0,\frac{1}{2})$ and $\frac{1}{1-\epsilon}< p<\frac{1}{\epsilon}$,
let $u\in W^{1,p}_0(\Omega)\cap W^{1+\frac{1}{p}+\epsilon,p}(\Omega)$ be the solution of problem (\ref{model}), and let $u_h^{{\rm DG}}$ be its IPDG approximation defined in (\ref{IPDG}). Then, we have
\begin{align}
|||u-u_h^{{\rm DG}}|||_{W_{hh}^{1,p}(\Omega)}\leq& C\Big(\|\nabla(u-\Pi_ku)\|_{L_h^p(\Omega)}
+h^{\frac{1}{p}+\epsilon}|\nabla(u-\Pi_ku)|_{W_h^{\frac{1}{p}+\epsilon,p}(\Omega)}\Big),\label{W1p-IPDG-2:1}
\end{align}
where the mesh-dependent norm $|||v|||_{W_{hh}^{1,p}(\Omega)}$ is defined by
\begin{align*}
|||v|||_{W_{hh}^{1,p}(\Omega)}^p=|||v|||_{W_h^{1,p}(\Omega)}^p+\sum_{F\in\mathcal{E}_h}h_F\|\{\nabla v\cdot{\bm n}\}\|^p_{L^p(F)},
\end{align*}
and the mesh-dependent seminorm $|\cdot|_{W_h^{\frac{1}{p}+\epsilon,p}(\Omega)}$ is defined in Theorem \ref{W1p-IPDG}.
\end{corollary}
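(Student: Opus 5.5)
The corollary follows from Theorem \ref{W1p-IPDG} once the extra face term $\sum_{F}h_F\|\{\nabla(u-u_h^{\rm DG})\cdot{\bm n}\}\|^p_{L^p(F)}$ is bounded by the right-hand side of (\ref{W1p-IPDG-2:1}). We split $u-u_h^{\rm DG}=(u-\Pi_ku)+(\Pi_ku-u_h^{\rm DG})=:\eta+\xi_h$ and treat the average of each piece separately. Since $\{\cdot\}$ is an average of the two one-sided traces, it suffices to bound $h_F\|\nabla w|_K\cdot{\bm n}\|^p_{L^p(F)}$ for each element $K$ with $F\subset\partial K$, for $w=\eta$ and $w=\xi_h$. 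By quasi-uniformity each element has a uniformly bounded number of faces, so all such sums reduce to sums over $K\in\mathcal{T}_h$ of $h_K\|\nabla w\|^p_{L^p(\partial K)}$.

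For the discrete part $\xi_h$, which is piecewise polynomial, we apply the inverse/trace estimate of Lemma \ref{inv} on the sub-triangulation of each $K$, as in the proof of Lemma \ref{tra}. This gives $h_K\|\nabla\xi_h\|^p_{L^p(\partial K)}\le C\|\nabla\xi_h\|^p_{L^p(K)}$. Summing over $K$ and using the triangle inequality $\|\nabla\xi_h\|_{L^p_h}\le\|\nabla(u-u_h^{\rm DG})\|_{L^p_h}+\|\nabla\eta\|_{L^p_h}$, Theorem \ref{W1p-IPDG} bounds this contribution by the right-hand side of (\ref{W1p-IPDG:1}).

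For the interpolation part $\eta$, we apply the trace inequality of Lemma \ref{tra} to each component of $\nabla\eta$ with $\beta=\epsilon$. This is admissible because $\epsilon\in(0,1-\frac1p)$ for $p>\frac{1}{1-\epsilon}$, and $u\in W^{1+\frac1p+\epsilon,p}$ ensures $\nabla\eta\in W^{\frac1p+\epsilon,p}(K)$. It gives
\begin{align*}
h_K\|\nabla\eta\|^p_{L^p(\partial K)}\le C\big(\|\nabla\eta\|^p_{L^p(K)}+h_K^{1+p\epsilon}|\nabla\eta|^p_{W^{\frac1p+\epsilon,p}(K)}\big).
\end{align*}
Since $h_K^{1+p\epsilon}=(h_K^{\frac1p+\epsilon})^p$ and $h_K\le h$, summing over $K$ yields exactly the $p$-th power of the right-hand side of (\ref{W1p-IPDG-2:1}). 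Here we read the seminorm $|\nabla v|_{W^{\frac1p+\epsilon,p}(K)}$ in its intended meaning, with $\nabla v$ inside the Gagliardo difference quotient, so that it matches the seminorm in Lemma \ref{tra} applied to $\nabla\eta$.

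Combining the two bounds with Theorem \ref{W1p-IPDG} for $|||u-u_h^{\rm DG}|||_{W_h^{1,p}(\Omega)}$ completes the proof. The only delicate point is the trace step for $\nabla\eta$ on polyhedral elements, which Lemma \ref{tra} already handles; the admissible range of $\beta$ is exactly what forces $p>\frac1{1-\epsilon}$. Everything else is a routine combination of the inverse inequality, the trace inequality, and the triangle inequality.
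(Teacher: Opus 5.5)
Your proposal is correct and follows essentially the paper's route. Both arguments combine Lemma \ref{tra} with $\beta=\epsilon$, an inverse estimate for the discrete part, the triangle inequality, and Theorem \ref{W1p-IPDG}. The only difference is the order of the split. The paper applies the trace inequality to the full error $u-u_h^{{\rm DG}}$ and then splits the fractional seminorm, which implicitly uses the fractional inverse estimate $h^{\frac1p+\epsilon}|\nabla(\Pi_ku-u_h^{{\rm DG}})|_{W_h^{\frac1p+\epsilon,p}(\Omega)}\le C\|\nabla(\Pi_ku-u_h^{{\rm DG}})\|_{L_h^p(\Omega)}$. You split first, so for $\xi_h$ you only need the standard discrete trace inequality. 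That is slightly cleaner, since Lemma \ref{inv} is stated only for integer orders.
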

\begin{proof}
By Lemma \ref{inv} and Lemma \ref{tra} with $\beta=\epsilon$, we have
\begin{align*}
\Big(\sum_{F\in\mathcal{E}_h}h_F\|\{\nabla (u-u_h^{{\rm DG}})\cdot{\bm n}\}\|^p_{L^p(F)}\Big)^{1/p}\leq& C\left(\|\nabla(u-u_h^{{\rm DG}})\|_{L^p_h(\Omega)}
+h^{\frac{1}{p}+\epsilon}|\nabla(u-u_h^{{\rm DG}})|_{W_h^{\frac{1}{p}+\epsilon,p}(\Omega)}\right)\\
\leq&C\Big(\|\nabla(u-u_h^{{\rm DG}})\|_{L^p_h(\Omega)}+\|\nabla(u-\Pi_ku)\|_{L_h^p(\Omega)}\\
&+h^{\frac{1}{p}+\epsilon}|\nabla(u-\Pi_ku)|_{W_h^{\frac{1}{p}+\epsilon,p}(\Omega)}\Big),
\end{align*}
which, together with Theorem \ref{W1p-IPDG}, yields the desired result (\ref{W1p-IPDG-2:1}).
\end{proof}

Next, we establish optimal discrete $W^{1,p}$ error estimates for the hybrid mixed DG method and two representative HDG methods in the following theorem.
Note that the proof strategy is very similar with that for the IPDG method, but the technical details differ considerably because the IPDG method is based on the primary formulation, whereas the hybrid mixed DG and HDG methods are based on the mixed formulation. The detailed proofs are given in Subsection \ref{sec5:2}.

\begin{theorem}\label{W1p-HDG-pre}
For any given $\epsilon\in (0,\frac{1}{2})$ and $\frac{1}{1-\epsilon}< p<\frac{1}{\epsilon}$, let $(u,{\bm q})\in W^{1+\frac{1}{p}+\epsilon,p}(\Omega)\cap W_0^{1,p}(\Omega)
\times (W^{\frac{1}{p}+\epsilon,p}(\Omega))^3$ be the solution of problem (\ref{mixed}), and let $({\bm q}_h,u_h,\widehat{u}_h)$ be its HDG approximation defined in (\ref{HDG}). Then, we have
\begin{align}
&\|{\bm q}-{\bm q}_h\|_{L^p(\Omega)}+|||u-u_h|||_{W_h^{1,p}(\Omega)}\nonumber\\
\leq& C\Big(\|\nabla(u-\Pi_Vu)\|_{L_h^p(\Omega)}+\|{\bm q}-\Pi_W{\bm q}\|_{L^p(\Omega)}+h^{\frac{1}{p}+\epsilon}|{\bm q}-\Pi_W{\bm q}|_{W_h^{\frac{1}{p}+\epsilon,p}(\Omega)}\Big),\label{W1p-HDG-pre:1}
\end{align}
where the mesh-dependent norm $|||\cdot|||_{W_h^{1,p}(\Omega)}$ and seminorm $|\cdot|_{W_h^{\frac{1}{p}+\epsilon,p}(\Omega)}$ are defined in Theorem \ref{W1p-IPDG}.
\end{theorem}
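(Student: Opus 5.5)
We follow the same two-stage strategy as for Theorem \ref{W1p-IPDG}, adapted to the mixed formulation (\ref{eHDG}). The first stage treats $2\leq p<\frac{1}{\epsilon}$. We split the error with the projections: $\bm e_q=\Pi_W\bm q-\bm q_h$, $e_u=\Pi_Vu-u_h$, $\widehat e_u=\Pi_{\widehat V}u-\widehat u_h$. By Lemma \ref{dfp}, $(\bm e_q,e_u,\widehat e_u)$ satisfies $\mathcal{B}$-Galerkin orthogonality with a right-hand side involving only $\bm q-\Pi_W\bm q$ (in volume and on $\partial K$) and $u-\Pi_Vu$. Lemma \ref{inequality} reduces $\|\nabla e_u\|_{L^p_h}$ and the face jumps $h_K^{-1/p'}\|e_u-\widehat e_u\|_{L^p(\partial K)}$ to $\|\bm e_q\|_{L^p}$ plus the stabilization term; this uses a local $L^p$ version obtained by scaling the finite-dimensional estimate. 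The jump $[u-u_h]$ across $F$ is controlled by $u_h-\widehat u_h$ from both sides, since $\widehat u_h$ is single valued. It therefore suffices to bound $\bm e_q$ and $\sqrt{\tau}(\mathcal{L}u_h-\widehat u_h)$ pointwise in $L^p$.

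For the pointwise bounds we fix $K$ and $x_0\in K$. We represent a component of $\bm e_q(x_0)$ through a regularized Green's function $G$ for $\partial_j\delta_{x_0}$, and the face quantity through the interface Green's functions of \cite{lq2026}. Their HDG approximations $(\bm q_h^G,G_h,\widehat G_h)$ and the orthogonality give an identity of the form $\bm e_q(x_0)=\langle\text{interpolation errors},\,\text{Green errors}\rangle$. We then split into dyadic annuli $\Omega_j$ around $x_0$ and apply Lemma \ref{lee}(2) on each annulus together with Lemma \ref{leee}(2) globally. This should give a weighted bound of the type
\[
|\bm e_q(x_0)|\le C\int_\Omega\frac{h^{\sigma}}{(|x-x_0|+h)^{3+\sigma}}\big(|\bm q-\Pi_W\bm q|+|\nabla(u-\Pi_Vu)|+h|\text{face terms}|\big)\,dx ,
\]
with $\sigma>0$, and the same form for the stabilization term. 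The face traces $h_K^{1/p}\|(\bm q-\Pi_W\bm q)\cdot\bm n\|_{L^p(\partial K)}$ are converted to volume terms by Lemma \ref{tra} with $\beta=\epsilon$. This conversion is where the seminorm $h^{\frac1p+\epsilon}|\bm q-\Pi_W\bm q|_{W_h^{\frac1p+\epsilon,p}}$ enters.

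Raising to the $p$th power and integrating in $x_0$, the Fubini-type weighted estimates (Lemma \ref{fubini}, Lemma \ref{fubini1}) give the logarithm-free bound (\ref{W1p-HDG-pre:1}) for $2\le p<\frac1\epsilon$. Because $\epsilon>0$, the kernel is integrable with a margin, so no $|\ln h|$ arises. From this we deduce a discrete $W^{1,q}$–$W^{1,p}$ inf-sup for $\mathcal{B}$ with $2\le q<\frac1\epsilon$. Given a discrete triple, we solve the continuous problem with data built from it in $W^{-1,p}$, and the HDG projection of that solution is stable by stage one. This result plays the role of Lemma \ref{inf-sup-HDG0}.

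The second stage treats $\frac{1}{1-\epsilon}<p<2$, where the conjugate exponent satisfies $q\in(2,\frac1\epsilon)$. We apply the inf-sup in the dual direction to $(\bm e_q,e_u,\widehat e_u)$: the error in the $p$-norm is bounded by $\sup \mathcal{B}(\bm e_q,e_u,\widehat e_u;\bm w_h,v_h,\widehat v_h)/\|(\bm w_h,v_h,\widehat v_h)\|_{q}$. This supremum equals the consistency residual in the interpolation errors, and Hölder's inequality bounds it by the right-hand side of (\ref{W1p-HDG-pre:1}) times the $q$-norm, where face terms again use Lemma \ref{tra}. To justify the dual inf-sup, we use the symmetry of $\mathcal{B}$ up to sign changes, and we construct the dual problem with regularity (\ref{model-r:3}) in the pair of exponents.

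The main obstacle is the pointwise Green's function step for the mixed form. The Green's functions must be regularized so that Lemma \ref{lee}(2), which has the extra $h\rho^{-1}$ flux and stabilization terms, still gives decay $\sigma>0$ uniformly for all three methods on polyhedral meshes; we expect the jump and stabilization components to be the delicate part. We would first try to mimic the IPDG argument component by component, and absorb the $h\rho^{-1}$ terms by choosing the annulus constant $r$ large enough.
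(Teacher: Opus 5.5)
Your overall architecture matches the paper's: Green's-function representation, dyadic decomposition and Fubini for $p\ge 2$, then inf-sup and a dual problem for $p<2$. However, the estimate that carries stage one is not supplied, and the mechanism you sketch for it would fail. What is needed is Lemma \ref{rG01-HDG-L1}: the discrete Green's function errors satisfy $\|\sigma_z^{-s}(\cdot)\|_{L^q}\le Ch^{-3/p}$ (resp.\ $Ch^{1-3/p}$ for the interface Green's functions). This is then paired by H\"older with $\|\sigma_z^{s}(\cdot)\|_{L^p}$ of the interpolation errors. That weighted form is what Lemma \ref{fubini} handles. Your $L^1$ convolution kernel would instead require pointwise decay of the discrete Green's function error, which local energy estimates do not provide.

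Applying Lemma \ref{lee}(2) on annuli and absorbing the $h\rho^{-1}$ terms for large $M$ does not give the weighted bound. The reason is that Lemma \ref{lee}(2) also contains the pollution term $C\rho^{-1}\|\Gamma_l-\Gamma_{l,h}\|_{L^2(\Omega_j^1)}$, and enlarging the annulus constant cannot absorb it. The paper controls it by a local duality argument ($-\Delta W=\psi$, $\psi$ supported in $\Omega_j^1$). Near the annulus it uses energy and interpolation estimates. Away from it, it uses the $W^{1,\infty}$ interpolation error of $W$, which is $O(h^{\gamma}d_j^{-\gamma-1/2})$ by the H\"older estimate (\ref{pe-G:1}), multiplied by global $L^1$-type error norms of the discrete Green's functions, which are $O(1)$ (resp.\ $O(h)$) by \cite{lq2026}. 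This is what forces $\frac{3}{p}<s<\gamma+\frac{3}{p}$: the lower bound is needed for Lemma \ref{fubini}, the upper bound for summing the dyadic pollution terms. It also shows that the absence of $|\ln h|$ comes from $\gamma>0$, not from $\epsilon>0$ as you claim. The parameter $\epsilon$ enters only through Lemma \ref{tra} applied to ${\bm q}-\Pi_W{\bm q}$ and through the regularity (\ref{model-r:3}) used in the inf-sup.

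Three further points need repair.

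\emph{Representation of the flux error.} To represent $(\Pi_W{\bm q}-{\bm q}_h)(z)$ you need the source in the constitutive equation, i.e.\ $\boldsymbol{\mathcal{R}}_2=-\nabla\Gamma-{\bm e}_i\delta_{h,z}$ with (\ref{rG-HDG2}). The standard mixed form of a Green's function for $\partial_j\delta$ only represents $\partial_j(\Pi_Vu-u_h)$.

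\emph{Stabilization term.} Applying Lemma \ref{inequality} to the projected error, which satisfies (\ref{HDG:1}) homogeneously, to get $\nabla e_u$ and $e_u-\widehat e_u$ from ${\bm e}_q$ is a valid shortcut. It replaces the paper's $\boldsymbol{\mathcal R}_1$ representation, and for the hybrid mixed DG method ($\tau=0$) it makes ${\bm e}_q$ alone sufficient. For HDG-1/HDG-2, however, the stabilization term must be bounded by ${\bm q}-{\bm q}_h$ and the jumps $[u_h]$ through the identities (\ref{W1p-HDG-c-proof:1}) coming from (\ref{HDG:3}). It is the jumps that the interface Green's functions represent pointwise, so your chain must go from jumps to the stabilization term, not the other way round.

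\emph{Stage two.} The inf-sup obtained from stage one holds only on $\widetilde{\bm V}_h$, because the construction in Lemma \ref{inf-sup-HDG0} only represents $\mathcal B$-functionals with no $\widehat v_h$-component. So it cannot simply be transposed by symmetry and applied to the error triple, which is not in $\widetilde{\bm V}_h$. Re-proving it with the exponents swapped would presuppose the $p<2$ estimate you are trying to prove. The paper instead solves the discrete dual problem whose right-hand side is the duality map of $({\bm e}_q,\nabla e_u,[e_u])$. Since that right-hand side has no $\widehat v_h$-component, the dual solution lies in $\widetilde{\bm V}_h$. The $q$-inf-sup bounds it, and Galerkin orthogonality plus the symmetry of $\mathcal B$ finish the argument. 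This is also why the left-hand side must use the jump norm $|||\cdot|||_{W_h^{1,p}(\Omega)}$ rather than $h_K^{1-p}\|v-\widehat v\|^p_{L^p(\partial K)}$.
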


\begin{remark}
For the HDG-1 method, if let $\Pi_V$ be a projection operator from $H^1(\Omega)$ onto $V_h$ such that for each element $K\in\mathcal{T}_h$,
\begin{subequations}\label{proj_HDG-1}
\begin{align}
(\Pi_Vu,v_h)_K=(u,v_h)_K,\quad \forall v_h\in\mathcal{P}^{k-1}(K),\\
\langle\Pi_Vu,\widehat{v}_h\rangle_{F_K^{\ast}}=\langle u,\widehat{v}_h\rangle_{F_K^{\ast}},\quad \forall\widehat{v}_h\in\mathcal{P}^k(F_K^{\ast}),
\end{align}
\end{subequations}
where $F_K^{\ast}$ is a fixed face of the element $K$ such that $K$ lies on one side of the hyperplane containing $F_K^{\ast}$.
Then, for the HDG-1 method, the estimate for $\|{\bm q}-{\bm q}_h\|_{L^p(\Omega)}$ in (\ref{W1p-HDG-pre:1}) can be improved to
\begin{align}
\|{\bm q}-{\bm q}_h\|_{L^p(\Omega)}\leq C\left(\|{\bm q}-\Pi_W{\bm q}\|_{L^p(\Omega)}+h^{\frac{1}{p}+\epsilon}|{\bm q}-\Pi_W{\bm q}|_{W_h^{\frac{1}{p}+\epsilon,p}}\right),\label{W1p-HDG-pre:2}
\end{align}
where $\frac{1}{1-\epsilon}< p<\frac{1}{\epsilon}$ and $\epsilon\in (0,\frac{1}{2})$ is any given constant.
In fact, the projection defined in \cite[($4.8$)]{cfq2018} is the vectorization of $\Pi_{V}$ defined in (\ref{proj_HDG-1}).
Moreover, we refer to \cite[Proposition~$4.3$]{cfq2018} for the approximation properties of the projection operator $\Pi_{V}$.
A brief proof of (\ref{W1p-HDG-pre:2}) is given in Appendix \ref{apendixC:1}.
\end{remark}

With Theorem \ref{W1p-HDG-pre} at hand, we derive the following two corollaries for the hybrid mixed DG method and two representative HDG methods.
The detailed proofs are given in Appendix \ref{apendixC:2} and Appendix \ref{apendixC:3}, respectively.

\begin{corollary}\label{W1p-HDG-c}
For any given $\epsilon\in (0,\frac{1}{2})$ and $\frac{1}{1-\epsilon}< p<\frac{1}{\epsilon}$, let $(u,{\bm q})\in W^{1+\frac{1}{p}+\epsilon,p}(\Omega)\cap W_0^{1,p}(\Omega)
\times (W^{\frac{1}{p}+\epsilon,p}(\Omega))^3$ be the solution of problem (\ref{mixed}), and let $({\bm q}_h,u_h,\widehat{u}_h)$ be its HDG approximation defined in (\ref{HDG}). Then, we have
\begin{align}
&\|{\bm q}-{\bm q}_h\|_{L^p(\Omega)}+\|\nabla(u-u_h)\|_{L_h^p(\Omega)}+\Big(\sum_{K\in\mathcal{T}_h}\tau^{p-1}\|\mathcal{L}u_h-\widehat{u}_h\|_{L^p(\partial K)}^p\Big)^{1/p}\nonumber\\
\leq& C\Big(\|\nabla(u-\Pi_Vu)\|_{L_h^p(\Omega)}+\|{\bm q}-\Pi_W{\bm q}\|_{L^p(\Omega)}+h^{\frac{1}{p}+\epsilon}|{\bm q}-\Pi_W{\bm q}|_{W_h^{\frac{1}{p}+\epsilon,p}(\Omega)}\Big),\label{W1p-HDG-c:1}
\end{align}
where the mesh-dependent seminorm $|\cdot|_{W_h^{\frac{1}{p}+\epsilon,p}(\Omega)}$ is defined in Theorem \ref{W1p-IPDG}.
\end{corollary}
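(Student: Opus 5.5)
Corollary \ref{W1p-HDG-c} follows from Theorem \ref{W1p-HDG-pre}. Write $E$ for its right-hand side. Theorem \ref{W1p-HDG-pre} already controls $\|{\bm q}-{\bm q}_h\|_{L^p(\Omega)}$ and $\|\nabla(u-u_h)\|_{L_h^p(\Omega)}$ by $CE$, since $\|\nabla v\|_{L^p_h}\leq |||v|||_{W_h^{1,p}}$. The only new term is the stabilization contribution
\[
\Big(\sum_{K}\tau^{p-1}\|\mathcal{L}u_h-\widehat{u}_h\|^p_{L^p(\partial K)}\Big)^{1/p}.
\]
For the hybrid mixed DG method $\tau=0$ and there is nothing to prove. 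For HDG-1 and HDG-2, $\tau\leq h_K^{-1}$, and $\mathcal{L}$ is either the identity or the face $L^2$-projection $\Pi_{\widehat V}$, which is $L^p(F)$-stable on polynomials. So it suffices to bound
\[
\Big(\sum_K h_K^{1-p}\|u_h-\widehat{u}_h\|^p_{L^p(\partial K)}\Big)^{1/p}\leq CE .
\]

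I would prove this $L^p$ analogue of Lemma \ref{inequality} locally on each $K$, starting from the local equation (\ref{HDG:1}). Integrating by parts gives, for all ${\bm w}_h\in{\bm W}(K)$,
\[
({\bm q}_h+\nabla u_h,{\bm w}_h)_K=\langle u_h-\widehat{u}_h,{\bm w}_h\cdot{\bm n}\rangle_{\partial K}.
\]
On the finite-dimensional local spaces, the $L^2$ estimate of Lemma \ref{inequality} can be combined with the inverse inequalities of Lemma \ref{inv} (norm equivalence on each face and element, by scaling to a reference configuration of the sub-triangulation). This yields
\[
\|\nabla u_h\|_{L^p(K)}+h_K^{-1+1/p}\|u_h-\widehat{u}_h\|_{L^p(\partial K)}\leq C\big(\|{\bm q}_h\|_{L^p(K)}+h_K^{-1+1/p}\|\mathcal{L}u_h-\widehat{u}_h\|_{L^p(F)}\big),
\]
where $F=F_K^{\ast}$ for HDG-1 and $F=\partial K$ for HDG-2. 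The left-hand side only needs $h_K^{-1+1/p}\|u_h-\widehat u_h\|_{L^p(\partial K)}$, which is exactly $(h_K^{1-p}\|u_h-\widehat u_h\|^p_{L^p(\partial K)})^{1/p}$.

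The sticking point is that the discrete quantities do not vanish for the exact solution. So I would apply the bound to error-type differences, not to the solution itself. Concretely, I would take $u_h-\Pi_Vu$, $\widehat{u}_h-\Pi_{\widehat V}u$ and ${\bm q}_h-\Pi_W{\bm q}$. By Lemma \ref{dfp} the projected exact triple satisfies the local equation up to consistency residuals. Those residuals are $\langle \Pi_Vu-u,{\bm w}_h\cdot{\bm n}\rangle_{\partial K}$ and $(\nabla(u-\Pi_Vu),{\bm w}_h)_K$, and they are bounded by the trace inequality of Lemma \ref{tra} with $\beta=\epsilon$ applied to $u-\Pi_Vu$.

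Summing the $p$-th powers over $K$ then bounds the jump term by three groups of terms. The first is $\|{\bm q}-{\bm q}_h\|_{L^p}$ together with $\|\nabla(u-u_h)\|_{L^p_h}$. The second is the interface contribution $h_K^{1-p}\|[u-u_h]\|^p$, which is already contained in $|||u-u_h|||_{W_h^{1,p}}$ via the comparison of $u_h-\widehat u_h$ with face jumps. The third is the projection errors, handled by Poincaré-type bounds $h^{-1}\|u-\Pi_Vu\|_{L^p(K)}\leq C\|\nabla(u-\Pi_Vu)\|_{L^p(K)}$. Theorem \ref{W1p-HDG-pre} then closes the estimate, giving the bound $CE$.

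I expect the main obstacle to be making the $L^p$ local estimate rigorous on polyhedral elements for HDG-1, where $\tau$ lives only on $F_K^\ast$. That requires the $L^2$ kernel argument behind Lemma \ref{inequality} to transfer to $L^p$. My first attempt would be a finite-dimensional scaling argument: Lemma \ref{inequality} is an equivalence of norms on a fixed finite-dimensional space, and scaling gives $p$-independent constants up to $p$-dependence only.
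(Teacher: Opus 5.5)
Your reduction of the stabilization term to $\sum_K h_K^{1-p}\|u_h-\widehat u_h\|^p_{L^p(\partial K)}$ is legitimate, but the way you then bound that quantity is circular. The $L^p$ analogue of Lemma \ref{inequality} that you derive from (\ref{HDG:1}) has $h_K^{-1+1/p}\|\mathcal{L}u_h-\widehat{u}_h\|_{L^p(F)}$ on its right-hand side. You apply it to the error triple $(\Pi_W\bm{q}-\bm{q}_h,\Pi_Vu-u_h,\Pi_{\widehat V}u-\widehat u_h)$. Up to projection errors, it then returns $\tau^{1-1/p}\|\mathcal{L}u_h-\widehat u_h\|_{L^p}$, which is exactly the term you set out to estimate.

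The phrase ``via the comparison of $u_h-\widehat u_h$ with face jumps'' is where the proof is missing. Because $\widehat u_h$ is single-valued, $[u_h]$ is controlled by $u_h-\widehat u_h$, not conversely. The local equation (\ref{HDG:1}) alone can never bound the trace mismatch by jumps. What ties $\widehat u_h$ to $u_h$ is the transmission condition (\ref{HDG:3}), which you never use. In the paper your local estimate is the tool for the \emph{next} result, Corollary \ref{W1p-HDG-c1}. There it is fed with the stabilization bound of the present corollary, not the other way round.

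The missing idea is as follows. On an interior face $F=\partial K^{+}\cap\partial K^{-}$, the quantities $\bm{q}_h\cdot\bm{n}$, $\mathcal{L}u_h$ and $\widehat u_h$ all lie in $\mathcal{P}^k(F)$. Hence (\ref{HDG:3})--(\ref{HDG:4}) say the numerical flux is single-valued, and this can be solved for $\widehat u_h$:
\begin{align*}
(\tau^{+}+\tau^{-})(\mathcal{L}u_h^{\pm}-\widehat{u}_h)=-[\bm{q}_h\cdot\bm{n}]\pm\tau^{\mp}\mathcal{L}[u_h].
\end{align*}
Since $\tau^{\pm}\le\tau^{+}+\tau^{-}$ and $\tau=h_K^{-1}$ wherever it is nonzero, this gives, face by face,
\begin{align*}
h_K\|\tau(\mathcal{L}u_h-\widehat u_h)\|^p_{L^p(F)}\le C\big(h\|[\bm{q}_h\cdot\bm{n}]\|^p_{L^p(F)}+h_F^{1-p}\|[u_h]\|^p_{L^p(F)}\big).
\end{align*}
For HDG-1 this is only needed on $F_K^{\ast}$. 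On boundary faces, $\widehat u_h=0$ gives the jump directly.

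The two terms are then handled separately. First, $[u_h]=[u_h-u]$ is already part of $|||u-u_h|||_{W_h^{1,p}(\Omega)}$. Second, $[\bm{q}_h\cdot\bm{n}]=[(\bm{q}_h-\bm{q})\cdot\bm{n}]$ by the normal continuity of $\bm{q}$. Lemma \ref{tra} with $\beta=\epsilon$ and an inverse estimate on $\Pi_W\bm{q}-\bm{q}_h$ bound it by $\|\bm{q}-\bm{q}_h\|_{L^p(\Omega)}+\|\bm{q}-\Pi_W\bm{q}\|_{L^p(\Omega)}+h^{\frac1p+\epsilon}|\bm{q}-\Pi_W\bm{q}|_{W_h^{\frac1p+\epsilon,p}(\Omega)}$. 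Theorem \ref{W1p-HDG-pre} then closes the estimate.

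Notice that your three groups of terms contain no flux-jump contribution at all. Yet that term is precisely what produces the fractional seminorm of $\bm{q}-\Pi_W\bm{q}$ on the right-hand side of (\ref{W1p-HDG-c:1}).
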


\begin{corollary}\label{W1p-HDG-c1}
For any given $\epsilon\in (0,\frac{1}{2})$ and $\frac{1}{1-\epsilon}< p<\frac{1}{\epsilon}$, let $(u,{\bm q})\in W^{1+\frac{1}{p}+\epsilon,p}(\Omega)\cap W_0^{1,p}(\Omega)
\times (W^{\frac{1}{p}+\epsilon,p}(\Omega))^3$ be the solution of problem (\ref{mixed}), and let $({\bm q}_h,u_h,\widehat{u}_h)$ be its HDG approximation defined in (\ref{HDG}). Then, we have
\begin{align}
|||({\bm q}-{\bm q}_h,u-u_h,u-\widehat{u}_h)|||_{W_{h^{\prime}}^{1,p}(\Omega)}
\leq& C\Big(\|\nabla(u-\Pi_Vu)\|_{L_h^p(\Omega)}+\|\nabla(u-\Pi_ku)\|_{L_h^p(\Omega)}\nonumber\\
&+\|{\bm q}-\Pi_W{\bm q}\|_{L^p(\Omega)}+h^{\frac{1}{p}+\epsilon}|{\bm q}-\Pi_W{\bm q}|_{W_h^{\frac{1}{p}+\epsilon,p}(\Omega)}\Big),\label{W1p-HDG-c1:1}
\end{align}
where the mesh-dependent norm $|||({\bm w},v,\widehat{v})|||_{W_{h^{\prime}}^{1,p}(\Omega)}$ is defined by
\begin{align*}
|||({\bm w},v,\widehat{v})|||_{W_{h^{\prime}}^{1,p}(\Omega)}^p=\|{\bm w}\|_{L^p(\Omega)}^p+\|\nabla v\|_{L_h^p(\Omega)}^p
+\sum_{K\in\mathcal{T}_h}h_K^{1-p}\|v-\widehat{v}\|_{L^p(\partial K)}^p,
\end{align*}
and the mesh-dependent seminorm $|\cdot|_{W_h^{\frac{1}{p}+\epsilon,p}(\Omega)}$ is defined in Theorem \ref{W1p-IPDG}.
\end{corollary}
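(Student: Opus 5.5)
The estimate (\ref{W1p-HDG-c1:1}) will follow from Theorem \ref{W1p-HDG-pre}, with local arguments controlling the face term $h_K^{1-p}\|(u-u_h)-(u-\widehat{u}_h)\|_{L^p(\partial K)}^p = h_K^{1-p}\|\widehat{u}_h-u_h\|_{L^p(\partial K)}^p$. Theorem \ref{W1p-HDG-pre} already bounds $\|{\bm q}-{\bm q}_h\|_{L^p(\Omega)}$ and $\|\nabla(u-u_h)\|_{L_h^p(\Omega)}$ by the right-hand side of (\ref{W1p-HDG-pre:1}), which is dominated by the right-hand side of (\ref{W1p-HDG-c1:1}). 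So the only new task is to estimate
\begin{align*}
\Big(\sum_{K\in\mathcal{T}_h}h_K^{1-p}\|u_h-\widehat{u}_h\|^p_{L^p(\partial K)}\Big)^{1/p}.
\end{align*}

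I would take this face term from an $L^p$ version of Lemma \ref{inequality}. That lemma is stated in $L^2$ on each $K$ for discrete functions satisfying (\ref{HDG:1}). For $({\bm q}_h,u_h,\widehat u_h)$ every quantity is polynomial on $K$ and $\partial K$, so the inverse inequality (Lemma \ref{inv}) and equivalence of norms on a scaled reference configuration give
\begin{align*}
\|\nabla u_h\|_{L^p(K)}+h_K^{-1+\frac1p}\|u_h-\widehat{u}_h\|_{L^p(\partial K)}\leq C\Big(\|{\bm q}_h\|_{L^p(K)}+h_K^{-1+\frac1p}\|\mathcal{L}u_h-\widehat{u}_h\|_{L^p(\partial K)}\Big).
\end{align*}
The right-hand side involves ${\bm q}_h$ itself and not the error, so I would first subtract a suitable discrete interpolant.

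Set ${\bm e}_q=\Pi_W{\bm q}-{\bm q}_h$, $e_u=\Pi_ku-u_h$ (or $\Pi_Vu-u_h$), and $\widehat e_u=\Pi_{\widehat V}u-\widehat{u}_h$. Then $u_h-\widehat u_h=(\Pi_{\widehat V}u-\widehat u_h)-(\Pi_{\widehat V}u-u_h)$. Using (\ref{HDG:1}) and the continuous identity ${\bm q}+\nabla u=0$ tested against ${\bm w}_h$, we get
\begin{align*}
({\bm e}_q,{\bm w}_h)_{K}-(e_u,\nabla\cdot{\bm w}_h)_K+\langle\widehat e_u,{\bm w}_h\cdot{\bm n}\rangle_{\partial K}=(\Pi_W{\bm q}-{\bm q},{\bm w}_h)_K+\text{(projection terms)}.
\end{align*}
Choosing ${\bm w}_h$ on $K$ by the local argument behind Lemma \ref{inequality}, which dualizes $\widehat e_u-e_u$ on $\partial K$ and $\nabla e_u$ in $K$, and scaling in $L^p$, yields
\begin{align*}
h_K^{-1+\frac1p}\|e_u-\widehat e_u\|_{L^p(\partial K)}\le C\big(\|{\bm q}-{\bm q}_h\|_{L^p(K)}+\|{\bm q}-\Pi_W{\bm q}\|_{L^p(K)}+h_K^{-1+\frac1p}\|\sqrt{\tau}\ \text{-term}\|+\|\nabla(u-\Pi_ku)\|_{L^p(K)}\big).
\end{align*}
Here the $\tau$-term is $\tau^{(p-1)/p}\|\mathcal{L}u_h-\widehat u_h\|_{L^p(\partial K)}$, up to the corresponding projection error. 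It is controlled globally by Corollary \ref{W1p-HDG-c}, which I may assume. The leftover interpolation piece $h_K^{-1+\frac1p}\|u-\Pi_ku\|_{L^p(\partial K)}$ is handled by the trace inequality (Lemma \ref{tra}) with $\beta=1-\frac1p$ and a Poincaré/Bramble--Hilbert estimate, giving $C\|\nabla(u-\Pi_ku)\|_{L^p(K)}$. This is why $\|\nabla(u-\Pi_ku)\|_{L^p_h}$ appears in (\ref{W1p-HDG-c1:1}). Similarly $\|u-\Pi_{\widehat V}u\|_{L^p(F)}\le\|u-\Pi_ku\|_{L^p(F)}$ up to a constant, by $L^p$-stability of $\Pi_{\widehat V}$ and since $\Pi_{\widehat V}$ reproduces traces of $\mathcal{P}^k$.

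The main obstacle is transferring Lemma \ref{inequality} from $L^2$ to $L^p$ in the error (not discrete-solution) form, uniformly across the three methods. For the hybrid mixed DG method and HDG-1, $\tau$ vanishes on some faces, so control of $u_h-\widehat u_h$ there must come purely from (\ref{HDG:1}) via the specific local space ${\bm W}(K)$. I would handle this by a reference-element compactness argument: the seminorm-type map $(\,{\bm e}_q,e_u,\widehat e_u)\mapsto$ left side vanishes whenever the right side does, by the kernel characterization proved in the cited references. Equivalence of finite-dimensional quantities on the reference element then gives the $L^p$ bound with constants independent of $h$ after scaling. On polyhedral meshes, the uniform bound $\mathcal{N}$ on sub-elements keeps the constants uniform. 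Summing the $p$-th powers over $K$ and combining with Theorem \ref{W1p-HDG-pre} and Corollary \ref{W1p-HDG-c} completes the proof.
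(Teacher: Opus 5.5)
Your proposal is correct and follows essentially the paper's argument in Appendix~\ref{apendixC:3}: reduce to the face term $h_K^{1-p}\|u_h-\widehat u_h\|_{L^p(\partial K)}^p$, apply Lemma~\ref{inequality} to the projected error triple, pass from $L^2$ to $L^p$ with Lemma~\ref{inv} (everything is piecewise polynomial), bound the leftover $\Pi_Vu-\Pi_{\widehat V}u$ (or $\Pi_ku-\Pi_{\widehat V}u$) by Lemma~\ref{tra} with $\beta=1-\frac1p$ plus Poincar\'e, and close with Corollary~\ref{W1p-HDG-c}. The ``main obstacle'' you identify does not actually arise: $\Pi_W,\Pi_V,\Pi_{\widehat V}$ are $L^2$ projections, and $\nabla\cdot{\bm w}_h|_K$ and ${\bm w}_h\cdot{\bm n}|_F$ lie in the corresponding target spaces, so $(\Pi_W{\bm q}-{\bm q}_h,\Pi_Vu-u_h,\Pi_{\widehat V}u-\widehat u_h)$ satisfies (\ref{HDG:1}) with zero right-hand side (your term $(\Pi_W{\bm q}-{\bm q},{\bm w}_h)_K$ vanishes); hence Lemma~\ref{inequality} applies verbatim, with the weight $\tau^{1-\frac1p}$ rather than $h_K^{-1+\frac1p}$ as in your first display (which would be vacuous on faces where $\tau=0$), and no reference-element compactness argument is needed, which would in any case be delicate on general polyhedral meshes lacking a single reference element.
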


\section{Proofs of Theorem \ref{W1p-IPDG} and Theorem \ref{W1p-HDG-pre}}\label{sec5}
In this section, we are devoted to proving Theorem \ref{W1p-IPDG} and Theorem \ref{W1p-HDG-pre}. To this end,
for $z\in\Omega$, let $\delta_z$ be the Dirac delta function such that $(\delta_z(x), v(x))=v(z)$ for any $v(x)\in\mathcal{C}(\Omega)$. By virtue of $\delta_z$, we define the Green's function by
\begin{align}
-\Delta G(z,x)=\delta_z\quad {\rm in}~\Omega,\quad G(z,x)=0\quad {\rm on}~\partial\Omega.\label{G}
\end{align}
From \cite[Theorem 2]{c1985}, we know that for each $z\in\Omega$, the above problem has a unique weak solution $G(z,x)\in W_0^{1,q}(\Omega)$ for $1\leq q<\frac{3}{2}$. Moreover, from \cite[Theorem 5.1.1 and Theorem 5.1.8]{mr2010}, \cite[Lemma 3.2]{dlsw2012}, \cite[Theorem 1]{glrs2009}, and
\cite[Proposition 4.1]{drs2024} we have
\begin{align}
\left|\frac{\partial^{\alpha+\beta}G(z,x)}{\partial z^{\alpha}\partial x^{\beta}}\right|\leq C|x-z|^{-1-|\alpha|-|\beta|},\quad 0\leq|\alpha+\beta|\leq 2,~
0\leq |\alpha|,|\beta|\leq 1,\label{pe-G}
\end{align}
for $z\neq x$, and
\begin{align}
\frac{|\partial_{z_i}G(z,x)-\partial_{y_i}G(y,x)|}{|z-y|^{\gamma}}\leq C\left(|x-z|^{-2-\gamma}+|x-y|^{-2-\gamma}\right),\label{pe-G:1}
\end{align}
for all $x,y,z\in \Omega$ and $y\neq z$, where $\gamma\in(0,1)$ is a constant depending on the domain $\Omega$ and $\partial_{z_i}$ denotes the partial derivative with respect to the $i$-th component of $z$.

In addition to the Dirac delta function $\delta_z$, for each $z\in \overline{K}_z$ with $K_z\in\mathcal{T}_h$, we define a smooth discrete delta function $\delta_{h,z}$ supported in $K_z$ (cf. \cite[Appendix A]{sw1995}), which satisfies
\begin{align}
\chi(z)=\int_{K_z}\delta_{h,z}(x)\chi(x)dx\quad \forall \chi\in \mathcal{P}^m(K_z),\quad \int_{K_z}\delta_{h,z}(x)dx=1,\label{ddd:1}
\end{align}
and
\begin{align}
|\delta_{h,z}|_{W^{l,p}(\Omega)}\leq Ch^{-l-3\left(1-\frac{1}{p}\right)}\quad \forall 1\leq p\leq \infty,~l=0,1,\label{ddd:2}
\end{align}
where $m=k$ for the IPDG, hybrid mixed DG, and HDG-1 methods, and $m=k+1$ for the HDG-2 method.

\subsection{Proof of Theorem \ref{W1p-IPDG}}\label{sec5:1}
For $z\in K_z$ with $K_z\in\mathcal{T}_h$, we define the following regularized Green's function:
\begin{align}
-\Delta\Gamma(z,x)=\nabla\cdot({\bm e}_i\delta_{h,z})\quad {\rm in}~\Omega,\quad \Gamma(z,x)=0\quad {\rm on}~\partial\Omega,\label{rG}
\end{align}
and let $\Gamma_h^{{\rm DG}}\in V_h^{{\rm DG}}$ be the IPDG approximation of $\Gamma$ such that
\begin{align}
a_h(\Gamma_h^{{\rm DG}},v_h)=(\nabla\cdot({\bm e}_i\delta_{h,z}),v_h)_{\mathcal{T}_h},\quad \forall v_h\in V_h^{{\rm DG}},\label{rG-IPDG}
\end{align}
where ${\bm e}_i$ denotes the $i$-th standard basis vector of $\mathbb{R}^3$.
Then, using  Lemma \ref{dfp}, (\ref{ddd:2}), and the definitions of $a_h$ and $\delta_{h,z}$, we have for $2\leq p<\frac{1}{\epsilon}$
\begin{align}
&-({\bm e}_i\cdot\nabla(\Pi_ku-u_h^{{\rm DG}}))(z)=(\nabla\cdot({\bm e}_i\delta_{h,z}),\Pi_ku-u_h^{{\rm DG}})_{\mathcal{T}_h}\nonumber\\
=&a_h(\Gamma_h^{{\rm DG}}-\Gamma,\Pi_ku-u)+a_h(\Gamma,\Pi_ku-u)\nonumber\\
=&(\nabla(\Gamma_h^{{\rm DG}}-\Gamma),\nabla(\Pi_ku-u))_{\mathcal{T}_h}-\sum_{F\in\mathcal{E}_h}\int_F\Big(\{\nabla(\Gamma_h^{{\rm DG}}-\Gamma)\cdot{\bm n}\}[\Pi_ku-u]\nonumber\\
&+\{\nabla(\Pi_ku-u)\cdot{\bm n}\}[\Gamma_h^{{\rm DG}}-\Gamma]-\lambda h_F^{-1}[\Gamma_h^{{\rm DG}}-\Gamma][\Pi_ku-u]\Big) ds\label{rG:1}\\
&+(\nabla\cdot({\bm e}_i\delta_{h,z}),\Pi_ku-u)\nonumber\\
\leq&C\Big(\Big(\sum_{F\in\mathcal{E}_h}h_F^{1-p}\|\sigma_z^s[\Pi_ku-u]\|_{L^p(F)}^p+
h_F\|\sigma_z^s\{\nabla(\Pi_ku-u)\cdot{\bm n}\}\|_{L^p(F)}^p\Big)^{1/p}\nonumber\\
&+\|\sigma_z^s\nabla(\Pi_ku-u)\|_{L_h^p(\Omega)}\Big)\Big(h^{-\frac{3}{p}}+\|\sigma_z^{-s}\nabla(\Gamma_h^{{\rm DG}}-\Gamma)\|_{L^q_h(\Omega)}\nonumber\\
&+\Big(\sum_{F\in\mathcal{E}_h}h_F\|\sigma_z^{-s}\{\nabla(\Gamma_h^{{\rm DG}}-\Gamma)\cdot{\bm n}\}\|_{L^q(F)}^q+h_F^{1-q}\|\sigma_z^{-s}[\Gamma_h^{{\rm DG}}-\Gamma]\|_{L^q(F)}^q\Big)^{1/q}\Big),\nonumber
\end{align}
where $q$ is the conjugate number of $p$ such that $\frac{1}{p}+\frac{1}{q}=1$, $s$ is any positive real number, and $\sigma_z$ is a weight function defined by
\begin{align}
\sigma_z(x)=\frac{h}{|x-z|+h}.\label{swf}
\end{align}

For each $x_0\in F$ with $F\in\mathcal{E}_h^o$, let $K^+$ and $K^-$ be the elements sharing the common face $F$. Then, we define the following interface
Green's function:
\begin{align}
-\Delta\Gamma_0(x_0,x)=\delta_{h,K^+}-\delta_{h,K^{-}}\quad {\rm in}~\Omega,\quad \Gamma_0(x_0,x)=0\quad {\rm on}~\partial\Omega,\label{rG0}
\end{align}
and let $\Gamma_{0,h}^{{\rm DG}}\in V_h^{{\rm DG}}$ be its IPDG approximation such that
\begin{align}
a_h(\Gamma_{0,h}^{{\rm DG}},v_h)=(\delta_{h,K^+}-\delta_{h,K^{-}},v_h)_{\mathcal{T}_h},\quad \forall v_h\in V_h^{{\rm DG}},\label{rG0-IPDG}
\end{align}
where $\delta_{h,K^+}$ and $\delta_{h,K^-}$ denote the discrete delta function $\delta_{h,x_0}$ supported in $K^+$ and $K^-$, respectively.
Using  Lemma \ref{dfp}, (\ref{ddd:2}), (\ref{rG0}), and (\ref{rG0-IPDG}), we obtain for $2\leq p<\frac{1}{\epsilon}$
\begin{align}
&[\Pi_ku-u_h^{{\rm DG}}](x_0)=(\delta_{h,K^+}-\delta_{h,K^{-}},\Pi_ku-u_h^{{\rm DG}})_{\mathcal{T}_h}\nonumber\\
=&a_h(\Gamma_{0,h}^{{\rm DG}}-\Gamma_0,\Pi_ku-u)+a_h(\Gamma_0,\Pi_ku-u)\nonumber\\
\leq&C\Big(\Big(\sum_{F\in\mathcal{E}_h}h_F^{1-p}\|\sigma_{x_0}^s[\Pi_ku-u]\|_{L^p(F)}^p+
h_F\|\sigma_{x_0}^s\{\nabla(\Pi_ku-u)\cdot{\bm n}\}\|_{L^p(F)}^p\Big)^{1/p}\label{rG0:1}\\
&+\|\sigma_{x_0}^s\nabla(\Pi_ku-u)\|_{L_h^p(\Omega)}+h^{-1}\|\sigma_{x_0}^s(\Pi_ku-u)\|_{L^p(\Omega)}\Big)
\Big(h^{1-\frac{3}{p}}+\|\sigma_{x_0}^{-s}\nabla(\Gamma_{0,h}^{{\rm DG}}-\Gamma_0)\|_{L^q_h(\Omega)}\nonumber\\
&+\Big(\sum_{F\in\mathcal{E}_h}h_F\|\sigma_{x_0}^{-s}\{\nabla(\Gamma_{0,h}^{{\rm DG}}-\Gamma_0)\cdot{\bm n}\}\|_{L^q(F)}^q+h_F^{1-q}\|\sigma_{x_0}^{-s}[\Gamma_{0,h}^{{\rm DG}}-\Gamma_0]\|_{L^q(F)}^q\Big)^{1/q}\Big).\nonumber
\end{align}

Next, for each $x_{\ast}\in F$ with $F\in\mathcal{E}_h^{\partial}$, let $K_{\ast}$ be the element including the face $F$. Then, we define the following interface Green's function:
\begin{align}
-\Delta\Gamma_{\ast}(x_{\ast},x)=\delta_{h,x_{\ast}}\quad {\rm in}~\Omega,\quad \Gamma_{\ast}(x_{\ast},x)=0\quad {\rm on}~\partial\Omega,\label{rG1}
\end{align}
and let $\Gamma_{\ast,h}^{{\rm DG}}\in V_h^{{\rm DG}}$ be its IPDG approximation such that
\begin{align}
a_h(\Gamma_{\ast,h}^{{\rm DG}},v_h)=(\delta_{h,x_{\ast}},v_h)_{\mathcal{T}_h},\quad \forall v_h\in V_h^{{\rm DG}}.\label{rG1-IPDG}
\end{align}
Similarly, using Lemma \ref{dfp}, (\ref{ddd:2}), (\ref{rG1}), and (\ref{rG1-IPDG}), we have for $2\leq p<\frac{1}{\epsilon}$
\begin{align}
&[\Pi_ku-u_h^{{\rm DG}}](x_{\ast})=(\delta_{h,x_{\ast}},\Pi_ku-u_h^{{\rm DG}})_{\mathcal{T}_h}\nonumber\\
=&a_h(\Gamma_{\ast,h}^{{\rm DG}}-\Gamma_{\ast},\Pi_ku-u)+a_h(\Gamma_{\ast},\Pi_ku-u)\nonumber\\
\leq&C\Big(\Big(\sum_{F\in\mathcal{E}_h}h_F^{1-p}\|\sigma_{x_{\ast}}^s[\Pi_ku-u]\|_{L^p(F)}^p+
h_F\|\sigma_{x_{\ast}}^s\{\nabla(\Pi_ku-u)\cdot{\bm n}\}\|_{L^p(F)}^p\Big)^{1/p}\label{rG1:1}\\
&+\|\sigma_{x_{\ast}}^s\nabla(\Pi_ku-u)\|_{L_h^p(\Omega)}+h^{-1}\|\sigma_{x_{\ast}}^s(\Pi_ku-u)\|_{L^p(\Omega)}\Big)
\Big(h^{1-\frac{3}{p}}+\|\sigma_{x_{\ast}}^{-s}\nabla(\Gamma_{\ast,h}^{{\rm DG}}-\Gamma_{\ast})\|_{L^q_h(\Omega)}\nonumber\\
&+\Big(\sum_{F\in\mathcal{E}_h}h_F\|\sigma_{x_{\ast}}^{-s}\{\nabla(\Gamma_{\ast,h}^{{\rm DG}}-\Gamma_{\ast})\cdot{\bm n}\}\|_{L^q(F)}^q+h_F^{1-q}\|\sigma_{x_{\ast}}^{-s}[\Gamma_{\ast,h}^{{\rm DG}}-\Gamma_{\ast}]\|_{L^q(F)}^q\Big)^{1/q}\Big).\nonumber
\end{align}

To further estimate (\ref{rG:1}), (\ref{rG0:1}) and (\ref{rG1:1}), we introduce a dyadic decomposition of the domain $\Omega$. Without loss of generality, we assume that ${\rm diam}(\Omega)\leq 1$. Then, for a fixed point $z\in\overline{\Omega}$, we define
\begin{align*}
\Omega_{\ast}(z)=\{x\in\Omega: |x-z|\leq Mh\},\quad \Omega_j(z)=\{x\in\Omega: d_{j+1}\leq |x-z|\leq d_j\},
\end{align*}
and
\begin{align*}
\Omega_j^1(z)=\Omega_{j-1}(z)\cup\Omega_j(z)\cup\Omega_{j+1}(z),\\
\Omega_j^2(z)=\Omega_{j-2}(z)\cup\Omega_j^1(z)\cup\Omega_{j+2}(z),\\
\Omega_j^3(z)=\Omega_{j-3}(z)\cup\Omega_j^2(z)\cup\Omega_{j+3}(z),\\
\Omega_j^4(z)=\Omega_{j-4}(z)\cup\Omega_j^3(z)\cup\Omega_{j+4}(z),
\end{align*}
where $d_j=2^{-j}$ for $j=0,1,\cdots,I$ with $I\thickapprox |\ln h|$ such that $d_{I+1}\leq Mh< d_I$. Here, $M\geq 3$ is a positive constant that will be determined later. Therefore,
\begin{align*}
\Omega=\Omega_{\ast}(z)\cup\bigcup_{j=0}^I\Omega_j(z).
\end{align*}
\begin{lemma}\label{rG-lee}
For $z\in K_z$ with $K_z\in\mathcal{T}_h$, let $\Gamma(z,x)$ be the solution of problem (\ref{rG}). Then, we have
\begin{align}
|\Gamma|_{H^2(\Omega_j^2(z))}\leq Cd_j^{-\frac{5}{2}}.\label{rG-lee:1}
\end{align}
Furthermore, for $x_0\in F$ with $F\in\mathcal{E}_h^o$ and $x_{\ast}\in F$ with $F\in\mathcal{E}_h^{\partial}$,
let $\Gamma_0(x_0,x)$ and $\Gamma_{\ast}(x_{\ast},x)$ be the solutions of problems (\ref{rG0}) and (\ref{rG1}). Then, we have
\begin{align}
|\Gamma_0|_{H^2(\Omega_j^2(x_0))}\leq Chd_j^{-\frac{5}{2}},\quad |\Gamma_{\ast}|_{H^2(\Omega_j^2(x_{\ast}))}\leq Chd_j^{-\frac{5}{2}}.\label{rG-lee:2}
\end{align}
\end{lemma}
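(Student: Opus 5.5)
Represent the regularized Green's functions through the pointwise Green's function (\ref{G}) and bound their second derivatives pointwise on the dyadic shells using (\ref{pe-G}) and (\ref{pe-G:1}). The key observation is that on $\Omega_j^2(z)$ we have $|x-z|\geq c\,d_j\geq c\,Mh/4$, so the support of $\delta_{h,z}$ (contained in $K_z$, of diameter $\leq h$) stays away from $x$ at a distance comparable to $d_j$, provided $M$ is large enough.

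\textbf{Estimate for $\Gamma$.} Since $-\Delta\Gamma=\nabla\cdot({\bm e}_i\delta_{h,z})$, integrating by parts in the Green's representation gives, for $x$ away from $K_z$,
\begin{align*}
\Gamma(z,x)=-\int_{K_z}\delta_{h,z}(y)\,\partial_{y_i}G(y,x)\,dy,
\end{align*}
using the symmetry $G(y,x)=G(x,y)$ and that $\delta_{h,z}$ vanishes on $\partial K_z$. Differentiating twice in $x$ under the integral is legitimate because $x\notin\overline{K_z}$. Then (\ref{pe-G}) with $|\alpha|=1$ and $|\beta|\le 1$ handles mixed derivatives, but $|\beta|=2$ with $|\alpha|=1$ is not covered by (\ref{pe-G}). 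So I would instead work with $\nabla_x\Gamma$ and use an interior/boundary $H^2$ regularity argument:
\begin{itemize}
\item First, (\ref{pe-G}) with $|\alpha|=|\beta|=1$ and $\|\delta_{h,z}\|_{L^1}\le C$ give $|\nabla_x\Gamma(z,x)|\le C|x-z|^{-3}$, hence $\|\nabla\Gamma\|_{L^2(\Omega_j^3(z))}\le C d_j^{-3}d_j^{3/2}=Cd_j^{-3/2}$.
\item Since $\Gamma$ is harmonic in $\Omega_j^3(z)$ and vanishes on $\partial\Omega$, a local $H^2$ estimate on convex polyhedral domains (cutoff function of width $\sim d_j$, then the global $H^2$ bound of Lemma \ref{model-r}(1) with $p=2$) gives
\begin{align*}
|\Gamma|_{H^2(\Omega_j^2)}\le C\big(d_j^{-1}\|\nabla\Gamma\|_{L^2(\Omega_j^3)}+d_j^{-2}\|\Gamma\|_{L^2(\Omega_j^3)}\big).
\end{align*}
The function $\Gamma$ is not mean-free, so the second term needs a bound for $\Gamma$ itself: (\ref{pe-G}) with $|\alpha|=1,\beta=0$ gives $|\Gamma|\le C|x-z|^{-2}$, hence $\|\Gamma\|_{L^2(\Omega_j^3)}\le Cd_j^{-1/2}$.
\end{itemize}
Combining the two terms yields $Cd_j^{-5/2}$, which is (\ref{rG-lee:1}).

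\textbf{Estimate for $\Gamma_0$.} Write
\begin{align*}
\Gamma_0(x_0,x)=\int\big(\delta_{h,K^+}-\delta_{h,K^-}\big)(y)\,G(y,x)\,dy.
\end{align*}
Both discrete deltas have unit mass, so $\Gamma_0(x_0,x)=\int\delta_{h,K^+}(y)\big(G(y,x)-G(x_0,x)\big)dy$ minus the analogous $K^-$ term. Since $|y-x_0|\le h$ on the supports, the mean value theorem and (\ref{pe-G}) with $|\alpha|=1$ give $|\Gamma_0|\le Ch|x-x_0|^{-2}$ and $|\nabla_x\Gamma_0|\le Ch|x-x_0|^{-3}$ on $\Omega_j^3(x_0)$. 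The same local $H^2$ argument then gives $Chd_j^{-5/2}$.

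\textbf{Estimate for $\Gamma_\ast$.} Here there is no cancellation, and the gain of $h$ comes from the boundary instead: $G(\cdot,x)$ vanishes on $\partial\Omega$, and $x_\ast\in\partial\Omega$ with $\mathrm{supp}\,\delta_{h,x_\ast}\subset K_\ast$ within distance $h$ of $x_\ast$. So $G(y,x)=G(y,x)-G(\bar y,x)$ for a nearby boundary point $\bar y$ with $|y-\bar y|\le Ch$. The segment $[y,\bar y]$ lies in $\overline\Omega$ by convexity, so the mean value theorem with (\ref{pe-G}) gives $|G(y,x)|\le Ch|x-x_\ast|^{-2}$, and similarly $|\nabla_xG(y,x)|\le Ch|x-x_\ast|^{-3}$. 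Integrating against $\delta_{h,x_\ast}$, which has $L^1$ norm $\le C$, and repeating the local $H^2$ step gives $Chd_j^{-5/2}$.

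\textbf{Main obstacle.} The main difficulty I expect is the local $H^2$ estimate near $\partial\Omega$ on a convex polyhedron, that is, the step where the shell meets edges and vertices. I would handle it by multiplying by a cutoff $\omega$ supported in $\Omega_j^3$ with $|\nabla^k\omega|\le Cd_j^{-k}$. Then $\omega\Gamma\in H_0^1(\Omega)$ solves a Poisson problem with right-hand side $-2\nabla\omega\cdot\nabla\Gamma-\Gamma\Delta\omega$ in $L^2$, and applying (\ref{model-r:1}) with $p=2$ gives exactly the inequality displayed above.
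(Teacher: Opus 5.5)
Your argument is correct, but it differs from the paper's proof, which gives no argument: it cites \cite[Lemma 3.7]{lc2026} for (\ref{rG-lee:1}) and \cite[Lemma 4.6]{lq2026} for (\ref{rG-lee:2}).

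You instead make the lemma self-contained, using only ingredients already stated in the paper:
\begin{itemize}
\item the Green's representation of $\Gamma$, $\Gamma_0$ and $\Gamma_\ast$;
\item the bounds (\ref{pe-G}) with $|\alpha|=1$, $|\beta|\le 1$, correctly avoiding pure second $x$-derivatives of $G$, which are unbounded near edges and vertices of a polyhedron;
\item the unit mass in (\ref{ddd:1}) and $\|\delta_{h,\cdot}\|_{L^1(\Omega)}\le C$ from (\ref{ddd:2});
\item the global $H^2$ regularity of Lemma \ref{model-r}(1) with $p=2$, applied to the cut-off function $\omega\Gamma$, which handles edges and vertices without any local corner analysis.
\end{itemize}
What your route buys is transparency. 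It shows that the extra factor $h$ in (\ref{rG-lee:2}) comes from the cancellation of the two unit masses for $\Gamma_0$, and from $G(x_\ast,\cdot)=0$ for $\Gamma_\ast$. It also shows that convexity of $\Omega$ enters twice: through global $H^2$ regularity, and by keeping the segments $[y,x_0]$ and $[y,x_\ast]$ inside $\overline{\Omega}$, which matters on non-convex polyhedral elements.

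Two small points to tighten.
\begin{itemize}
\item The bound on $\nabla_x\Gamma_\ast$ is the step you pass over with ``similarly''. It needs $\nabla_xG(x_\ast,x)=0$. This holds because $y\mapsto\nabla_xG(y,x)$ is Lipschitz away from $x$ by the mixed bound in (\ref{pe-G}), and $G(y,\cdot)\to 0$ as $y\to x_\ast\in\partial\Omega$. Alternatively, drop the pointwise gradient bounds and use the Caccioppoli inequality $\|\nabla\Gamma\|_{L^2(\Omega_j^3)}\le Cd_j^{-1}\|\Gamma\|_{L^2(\Omega_j^4)}$, obtained by testing with $\omega^2\Gamma\in H_0^1(\Omega)$. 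Then only the $|\alpha|=1$, $\beta=0$ bound is needed.
\item Your separation argument needs $M$ large: the inner radius of $\Omega_I^3$ is only $d_{I+4}>Mh/16$. This matches the paper, which fixes $M$ large anyway. For a smaller fixed $M$, the few shells with $d_j$ comparable to $Mh$ follow from the global bounds (\ref{rG-L1-IPDG-proof:ad1}), with an $M$-dependent constant.
\end{itemize}
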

\begin{proof}
The results presented in (\ref{rG-lee:1}) and (\ref{rG-lee:2}) have been proved in \cite[Lemma 3.7]{lc2026} and \cite[Lemma 4.6]{lq2026}, respectively.
\end{proof}

Based on Lemma \ref{rG-lee}, we now prove the following results.

\begin{lemma}\label{rG-L1-IPDG}
For $z\in K_z$ with $K_z\in\mathcal{T}_h$, let $\Gamma(z,x)$ and $\Gamma_h^{{\rm DG}}(z,x)$ be the solutions of problems (\ref{rG}) and (\ref{rG-IPDG}).
Then, we have
\begin{align}
&\Big(\sum_{F\in\mathcal{E}_h}h_F\|\sigma_z^{-s}\{\nabla(\Gamma_h^{{\rm DG}}-\Gamma)\cdot{\bm n}\}\|_{L^q(F)}^q+h_F^{1-q}\|\sigma_z^{-s}[\Gamma_h^{{\rm DG}}-\Gamma]\|_{L^q(F)}^q\Big)^{1/q}\nonumber\\
&+\|\sigma_z^{-s}\nabla(\Gamma_h^{{\rm DG}}-\Gamma)\|_{L^q_h(\Omega)}\leq Ch^{-\frac{3}{p}}.
\end{align}
Furthermore, for $x_0\in F$ with $F\in\mathcal{E}_h^o$ and $x_{\ast}\in F$ with $F\in\mathcal{E}_h^{\partial}$, let $\Gamma_0(x_{0},x)$ and $\Gamma_{\ast}(x_{\ast},x)$ be the solutions of problems (\ref{rG0}) and (\ref{rG1}), and let $\Gamma_{0,h}^{{\rm DG}}$ and $\Gamma_{\ast,h}^{{\rm DG}}$
be their IPDG approximations defined in (\ref{rG0-IPDG}) and (\ref{rG1-IPDG}). Then, we have
\begin{align}
&\Big(\sum_{F\in\mathcal{E}_h}h_F\|\sigma_{x_l}^{-s}\{\nabla(\Gamma_{l,h}^{{\rm DG}}-\Gamma_l)\cdot{\bm n}\}\|_{L^q(F)}^q+h_F^{1-q}\|\sigma_{x_l}^{-s}[\Gamma_{l,h}^{{\rm DG}}-\Gamma_l]\|_{L^q(F)}^q\Big)^{1/q}\nonumber\\
&+\|\sigma_{x_l}^{-s}\nabla(\Gamma_{l,h}^{{\rm DG}}-\Gamma_l)\|_{L^q_h(\Omega)}\leq Ch^{1-\frac{3}{p}},\quad {\rm for}~l\in\{0,\ast\}.
\end{align}
Here, $s$ is a real number such that $\frac{3}{p}<s<\gamma+\frac{3}{p}$ for $2\leq p<\frac{1}{\epsilon}$, $\gamma\in(0,1)$ is a positive constant depending on the domain $\Omega$, and $q$ is the conjugate number of $p$ such that $\frac{1}{p}+\frac{1}{q}=1$.
\end{lemma}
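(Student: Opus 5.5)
We prove the lemma by a dyadic decomposition in the spirit of the weighted Green's function arguments of \cite{c2005, cc2005}, treating the three cases in parallel. Write $e=\Gamma_h^{\rm DG}-\Gamma$ (respectively $e=\Gamma_{l,h}^{\rm DG}-\Gamma_l$) and let $|||e|||_{*,D}$ denote the $L^2$ energy quantity appearing on the left of Lemma \ref{lee}(1) restricted to $D$. We split $\Omega=\Omega_\ast(z)\cup\bigcup_{j}\Omega_j(z)$. On $\Omega_j(z)$ we have $\sigma_z^{-s}\approx (d_j/h)^s$, and $|\Omega_j|\approx d_j^3$. Hölder's inequality with exponents $2/q$ and $2/(2-q)$ (recall $q\le 2$), together with $\frac1q-\frac12=\frac12-\frac1p$, gives on each piece
\begin{align*}
\|\sigma_z^{-s}\nabla e\|_{L^q_h(\Omega_j)}+\Big(\sum_F h_F\|\sigma_z^{-s}\{\nabla e\cdot{\bm n}\}\|^q_{L^q(F\cap\Omega_j)}+h_F^{1-q}\|\sigma_z^{-s}[e]\|^q_{L^q(F\cap\Omega_j)}\Big)^{1/q}
\le C(d_j/h)^s d_j^{3(\frac12-\frac1p)}|||e|||_{*,\Omega_j}.
\end{align*}
The face terms are handled the same way, since the $h_F$ powers match the $L^2$ scalings and the faces meeting $\Omega_j$ lie in $\Omega_j^1$. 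On $\Omega_\ast$ the weight is bounded by $M^s$. There we use the global energy estimate (Lemma \ref{leee}(1)) with the $\Gamma$ data $\|\nabla\cdot({\bm e}_i\delta_{h,z})\|$ scaling: $|||e|||_{*,\Omega}\le Ch^{-3/2}$, which is the standard bound $\|\Gamma\|_{H^2}\lesssim\|\delta_{h,z}\|_{H^1}\lesssim h^{-5/2}$ times $h$. This yields the contribution $C h^{3(\frac12-\frac1p)}h^{-3/2}=Ch^{-3/p}$. In the $\Gamma_l$ cases an extra factor $h$ appears, giving $h^{1-3/p}$.

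On $\Omega_j$ we apply the local energy estimate, Lemma \ref{lee}(1), with $\Omega_0=\Omega_j$ and $\Omega_1=\Omega_j^1$, so that $\rho\approx d_j$:
\[
|||e|||_{*,\Omega_j}\le Ch|\Gamma-\Pi_k\Gamma|_{H^2_h(\Omega_j^1)}+Cd_j^{-1}\|e\|_{L^2(\Omega_j^1)}\le Ch d_j^{-5/2}+Cd_j^{-1}\|e\|_{L^2(\Omega_j^1)},
\]
where the first term uses Lemma \ref{rG-lee}. For the first term, summing $(d_j/h)^s d_j^{3/2-3/p}h d_j^{-5/2}=h^{1-s}d_j^{s-1-3/p}$ over $j$ gives a geometric series dominated by its smallest $d_j\approx Mh$, because $s<1+3/p$ (indeed $s<\gamma+3/p<1+3/p$). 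The sum is therefore $Ch^{-3/p}$, as required. This shows where the upper bound on $s$ enters.

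The main obstacle is the local $L^2$ error $\|e\|_{L^2(\Omega_j^1)}$, which must satisfy a bound of the form $\|e\|_{L^2(\Omega_j^1)}\le C h^{1+\gamma'}d_j^{-3/2-\gamma'}$ with some $\gamma'$ such that $s<\gamma'+3/p$. Granting this, the second term sums to $h^{1+\gamma'-s}d_j^{s-\gamma'-1-3/p}\cdot d_j\,$-type expressions whose geometric sum is again dominated by $d_j\approx h$, giving $Ch^{-3/p}$. I would prove the $L^2$ bound by duality: take $\varphi\in C_0^\infty(\Omega_j^1)$ with $\|\varphi\|_{L^2}=1$ and let $w$ solve $-\Delta w=\varphi$, $w=0$ on $\partial\Omega$. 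By Galerkin orthogonality, $(e,\varphi)=a_h(e,w-\Pi_k w)$. Split the integral into the region near the singularity ($\Omega_\ast\cup$ closer shells, where $w$ is smooth away from ${\rm supp}\,\varphi$) and far shells. Away from ${\rm supp}\,\varphi$ one needs $|w|_{H^2}$ decay. This is where the Hölder estimate (\ref{pe-G:1}) on $\partial_z G$ with exponent $\gamma$ is used. Because $\delta_{h,z}$ has unit mass, it allows exchanging $\nabla\cdot({\bm e}_i\delta_{h,z})$ against a difference of derivatives of $w$, producing the gain $(h/d_j)^{\gamma}$. The energy estimates on each shell are then combined with a kick-back argument: the terms $d_j^{-1}\|e\|$ multiplied by $h/d_j\le 1/M$ are absorbed once $M$ is chosen large. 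This is the step I expect to be delicate, since the exponent bookkeeping must reproduce exactly the constraint $3/p<s<\gamma+3/p$.

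The interface cases $\Gamma_0$ and $\Gamma_\ast$ follow identically, using (\ref{rG-lee:2}), which carries an extra factor $h$. For $\Gamma_0$ the zero-mean data $\delta_{h,K^+}-\delta_{h,K^-}$ supplies the analogous Hölder gain. For $\Gamma_\ast$ the gain comes instead from the homogeneous Dirichlet condition near the boundary point $x_\ast$, which gives $|\Gamma_\ast|\lesssim h/|x-x_\ast|^{2}$-type decay, as in \cite[Lemma 4.6]{lq2026}.
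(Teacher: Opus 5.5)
Your skeleton matches the paper's: the split into $J_1$ (on $\Omega_\ast$, weight $\le M^s$) and $J_2$ (dyadic shells); H\"older from $L^q$ to $L^2$ with factor $d_j^{3(\frac12-\frac1p)}$; the global energy bound with $|\Gamma_l|_{H^2(\Omega)}\le Ch^{-5/2}$ (resp.\ $Ch^{-3/2}$); local energy estimates with Lemma~\ref{rG-lee} and $s<1+\frac3p$; and duality with $-\Delta W=\psi$, closed by taking $M$ large.

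The gap is the step you flag as delicate. In $(\psi,\Gamma_l-\Gamma_{l,h}^{\rm DG})=a_h(W-\Pi_kW,\Gamma_l-\Gamma_{l,h}^{\rm DG})$, the contribution of $\Omega\setminus\Omega_j^3(x_l)$ contains $\Omega_\ast(x_l)$ and all inner shells, where the error is largest. Bounding it shell by shell with $H^2$-decay of $W$ couples $\|\Gamma_l-\Gamma_{l,h}^{\rm DG}\|_{L^2(\Omega_j^1)}$ to the errors on every other shell, and absorbing only the terms carrying $hd_j^{-1}$ does not close that system. Note also that (\ref{pe-G}) provides no pointwise bound on $D_x^2G$. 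The paper instead uses an $L^\infty$--$L^1$ pairing on all of $\Omega\setminus\Omega_j^3(x_l)$:
\[
(\psi,\Gamma_l-\Gamma_{l,h}^{\rm DG})|_{\Omega\setminus\Omega_j^3(x_l)}\le Ch^{\gamma}d_j^{-\gamma-\frac12}E_l,
\]
where
\[
E_l=\|\nabla(\Gamma_l-\Gamma_{l,h}^{\rm DG})\|_{L_h^1(\Omega)}+\sum_{F\in\mathcal{E}_h}\big(h_F\|\{\nabla(\Gamma_l-\Gamma_{l,h}^{\rm DG})\cdot{\bm n}\}\|_{L^1(F)}+\|[\Gamma_l-\Gamma_{l,h}^{\rm DG}]\|_{L^1(F)}\big).
\]
Here $h^\gamma d_j^{-\gamma-1/2}$ bounds $\|\nabla(W-\Pi_kW)\|_{L_h^\infty}+h^{-1}\|W-\Pi_kW\|_{L^\infty}$ on that region. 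This follows from the $C^{1,\gamma}$ estimate (\ref{pe-G:1}) applied to the dual solution $W$, together with $\|\psi\|_{L^1}\le Cd_j^{3/2}$.

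The indispensable extra ingredient is the global $L^1$ error bound: $E_l\le C$ for $l=1$ and $E_l\le Ch$ for $l\in\{0,\ast\}$ (\cite[Lemma 4.8 and Lemma A.2]{lq2026}, itself proved by a separate dyadic argument). Inserted into $J_2$, this gives $Ch^{-3/p}\sum_j(hd_j^{-1})^{\frac3p+\gamma-s}E_l$, which is exactly where $s<\gamma+\frac3p$ enters. Your proposal never introduces $E_l$ or any substitute global control of the error near $x_l$, so as written it does not close.

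You also misplace the source of the $\gamma$-gain. The unit mass of $\delta_{h,z}$ lets one compare $\Gamma$ with $\partial_{z_i}G$, but that is not used in this proof. The factor $(h/d_j)^\gamma$ is an interpolation gain for the smooth dual solution $W$ away from ${\rm supp}\,\psi$, and it is identical for $l=1,0,\ast$. The extra $h$ for $\Gamma_0,\Gamma_\ast$ enters only through Lemma~\ref{rG-lee}, $|\Gamma_l|_{H^2(\Omega)}\le Ch^{-3/2}$, and $E_l\le Ch$. It is not a separate H\"older gain from zero-mean data or the Dirichlet condition in the duality step.

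Finally, your target $L^2$ bound is miscalibrated. With your own summation the requirement is $s<1+\gamma'+\frac3p$, not $s<\gamma'+\frac3p$. What the duality argument actually delivers is a pollution term of size $h^{\gamma}d_j^{-\gamma-\frac12}E_l$, i.e.\ $\gamma'=\gamma-1$, not a bound with $\gamma'>0$.
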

\begin{proof}
In the proof, we let $x_1=z$, $\Gamma_{1,h}^{{\rm DG}}=\Gamma_h^{{\rm DG}}$ and $\Gamma_1=\Gamma$. Then, for $l\in\{0,1,\ast\}$, according to the definition (\ref{swf}) of the weight function $\sigma_{x_l}(x)$, we have
\begin{align}
\sigma_{x_l}(x)=\frac{h}{|x-x_l|+h}\geq \frac{h}{Mh+h}=\frac{1}{M+1}\quad \forall x\in\Omega_{\ast}(x_l),\label{rG-L1-IPDG-proof:ad2}
\end{align}
and
\begin{align}
\sigma_{x_l}(x)=\frac{h}{|x-x_l|+h}\geq\frac{h}{d_j+h}\geq \frac{1}{2}hd_j^{-1},\quad \forall x\in\Omega_j(x_l).\label{rG-L1-IPDG-proof:ad3}
\end{align}
Consequently, the dyadic decomposition of the domain $\Omega$ and the above two estimates yield
\begin{align}
&\Big(\sum_{F\in\mathcal{E}_h}h_F\|\sigma_{x_l}^{-s}\{\nabla(\Gamma_{l,h}^{{\rm DG}}-\Gamma_l)\cdot{\bm n}\}\|_{L^q(F)}^q+h_F^{1-q}\|\sigma_{x_l}^{-s}[\Gamma_{l,h}^{{\rm DG}}-\Gamma_l]\|_{L^q(F)}^q\Big)^{1/q}\nonumber\\
&+\|\sigma_{x_l}^{-s}\nabla(\Gamma_{l,h}^{{\rm DG}}-\Gamma_l)\|_{L^q_h(\Omega)}\leq CM^s\Big(\|\nabla(\Gamma_{l,h}^{{\rm DG}}-\Gamma_l)\|_{L^q_h(\Omega_{\ast}(x_l))}+\label{rG-L1-IPDG-proof:1}\\
&\Big(\sum_{F\in\mathcal{E}_h}h_F\|\{\nabla(\Gamma_{l,h}^{{\rm DG}}-\Gamma_l)\cdot{\bm n}\}\|_{L^q(F\cap\Omega_{\ast}(x_l))}^q+h_F^{1-q}\|[\Gamma_{l,h}^{{\rm DG}}-\Gamma_l]\|_{L^q(F\cap\Omega_{\ast}(x_l))}^q\Big)^{1/q}\Big)\nonumber\\
&+C\sum_{j=0}^I(hd_j^{-1})^{-s}\Big(\|\nabla(\Gamma_{l,h}^{{\rm DG}}-\Gamma_l)\|_{L^q_h(\Omega_{j}(x_l))}+\Big(\sum_{F\in\mathcal{E}_h}
h_F\|\{\nabla(\Gamma_{l,h}^{{\rm DG}}-\Gamma_l)\cdot{\bm n}\}\|_{L^q(F\cap\Omega_j(x_l))}^q\nonumber\\
&+h_F^{1-q}\|[\Gamma_{l,h}^{{\rm DG}}-\Gamma_l]\|_{L^q(F\cap\Omega_j(x_l))}^q\Big)^{1/q}\Big)=J_1+J_2.\nonumber
\end{align}

For $J_1$, Lemma \ref{leee} and Lemma \ref{tra} with $\beta=1-\frac{1}{q}$ yield
\begin{align}
J_1\leq& CM^s\Big(\|\nabla(\Gamma_{l,h}^{{\rm DG}}-\Gamma_l)\|_{L_h^q(\mathcal{T}_h(\Omega_{\ast}(x_l)))}+h|\Gamma_{l,h}^{{\rm DG}}-\Gamma_l|_{W_h^{2,q}(\mathcal{T}_h(\Omega_{\ast}(x_l)))}\nonumber\\
&+\Big(\sum_{F\in\mathcal{E}_h}h_F^{1-q}\|[\Gamma_{l,h}^{{\rm DG}}-\Gamma_l]\|_{L^q(F\cap\Omega_{\ast}(x_l))}^q\Big)^{1/q}\Big)\nonumber\\
\leq& CM^s(Mh)^{3(\frac{1}{q}-\frac{1}{2})}\Big(\|\nabla(\Gamma_{l,h}^{{\rm DG}}-\Gamma_l)\|_{L_h^2(\Omega)}
+h|\Gamma_{l,h}^{{\rm DG}}-\Gamma_l|_{H_h^2(\Omega)}\nonumber\\
&+\Big(\sum_{F\in\mathcal{E}_h}h_F^{-1}\|[\Gamma_{l,h}^{{\rm DG}}-\Gamma_l]\|_{L^2(F)}^2\Big)^{1/2}\Big)
\leq Ch^{\frac{3}{q}-\frac{1}{2}}|\Gamma_l|_{H^2(\Omega)},\nonumber
\end{align}
where $\mathcal{T}_h(\Omega_{\ast}(x_l))=\{K\in\mathcal{T}_h: K\cap\Omega_{\ast}(x_l)\neq\emptyset\}$. Moreover, according to (\ref{ddd:2}) we have
\begin{align}\label{rG-L1-IPDG-proof:ad1}
|\Gamma_l|_{H^2(\Omega)}\leq C\begin{cases}
\|\nabla\cdot({\bm e}_i\delta_{h,z})\|_{L^2(\Omega)}\\
\|\delta_{h,K^+}-\delta_{h,K^-}\|_{L^2(\Omega)}\\
\|\delta_{h,x_{\ast}}\|_{L^2(\Omega)}
\end{cases}\leq C\begin{cases}
h^{-\frac{5}{2}},\quad & l=1,\\
h^{-\frac{3}{2}},\quad & l=0,\\
h^{-\frac{3}{2}},\quad & l=\ast.
\end{cases}
\end{align}
Therefore,
\begin{align}\label{rG-L1-IPDG-proof:2}
J_1\leq C\begin{cases}
h^{-\frac{3}{p}},\quad & l=1,\\
h^{1-\frac{3}{p}},\quad & l=0,\ast.
\end{cases}
\end{align}

As for $J_2$, using Lemma \ref{lee} and interpolation error estimates, we obtain
\begin{align}
J_2\leq& C\sum_{j=0}^I(hd_j^{-1})^{-s}d_j^{3(\frac{1}{q}-\frac{1}{2})}\Big(\|\nabla(\Gamma_{l,h}^{{\rm DG}}-\Gamma_l)\|_{L^2_h(\mathcal{T}_h(\Omega_{j}(x_l)))}\nonumber\\
&+\Big(\sum_{F\in\mathcal{E}_h\cap\mathcal{T}_h(\Omega_j(x_l))}
h_F\|\{\nabla(\Gamma_{l,h}^{{\rm DG}}-\Gamma_l)\cdot{\bm n}\}\|_{L^2(F)}^2+h_F^{-1}\|[\Gamma_{l,h}^{{\rm DG}}-\Gamma_l]\|_{L^2(F)}^2\Big)^{1/2}\Big)\nonumber\\
\leq&C\sum_{j=0}^I(hd_j^{-1})^{-s}d_j^{3(\frac{1}{q}-\frac{1}{2})}h|\Gamma_l|_{H^2(\Omega_j^2(x_l))}+C\sum_{j=0}^I(hd_j^{-1})^{-s}d_j^{3(\frac{1}{q}-\frac{1}{2})}d_j^{-1}
\|\Gamma_l-\Gamma_{l,h}^{{\rm DG}}\|_{L^2(\Omega_j^1(x_l))}.\nonumber
\end{align}
Moreover, from Lemma \ref{rG-lee} we know that
\begin{align*}
(hd_j^{-1})^{-s}d_j^{3(\frac{1}{q}-\frac{1}{2})}h|\Gamma_l|_{H^2(\Omega_j^2(x_l))}\leq C\begin{cases}
(hd_j^{-1})^{1+\frac{3}{p}-s}h^{-\frac{3}{p}},\quad & l=1,\\
(hd_j^{-1})^{1+\frac{3}{p}-s}h^{1-\frac{3}{p}},\quad & l=0,\ast.
\end{cases}
\end{align*}
Hence,
\begin{align}\label{rG-L1-IPDG-proof:3}
J_2\leq&C\sum_{j=0}^I(hd_j^{-1})^{-s}d_j^{3(\frac{1}{q}-\frac{1}{2})}\Big(\|\nabla(\Gamma_{l,h}^{{\rm DG}}-\Gamma_l)\|_{L^2_h(\mathcal{T}_h(\Omega_{j}(x_l)))}\nonumber\\
&+\Big(\sum_{F\in\mathcal{E}_h\cap\mathcal{T}_h(\Omega_j(x_l))}
h_F\|\{\nabla(\Gamma_{l,h}^{{\rm DG}}-\Gamma_l)\cdot{\bm n}\}\|_{L^2(F)}^2+h_F^{-1}\|[\Gamma_{l,h}^{{\rm DG}}-\Gamma_l]\|_{L^2(F)}^2\Big)^{1/2}\Big)\nonumber\\ \leq&C\sum_{j=0}^I(hd_j^{-1})^{-s}d_j^{3(\frac{1}{q}-\frac{1}{2})}d_j^{-1}
\|\Gamma_l-\Gamma_{l,h}^{{\rm DG}}\|_{L^2(\Omega_j^1(x_l))}+C\begin{cases}
h^{-\frac{3}{p}},\quad & l=1,\\
h^{1-\frac{3}{p}},\quad & l=0,\ast.
\end{cases}
\end{align}

To estimate $\|\Gamma_l-\Gamma_{l,h}^{{\rm DG}}\|_{L^2(\Omega^1_j(x_l))}$, let $W$ be the solution of the following problem:
\begin{align}
-\Delta W=\psi\quad {\rm in}~\Omega,\quad W=0\quad {\rm on}~\partial\Omega,\label{rG-L1-IPDG-proof:4}
\end{align}
for $\psi\in\mathcal{C}_0^{\infty}(\Omega_j^1(z))$ with $\|\psi\|_{L^2(\Omega_j^1(z))}=1$. Then, using Lemma \ref{dfp} we have
\begin{align*}
(\psi,\Gamma_l-\Gamma_{l,h}^{{\rm DG}})=a_h(W,\Gamma_l-\Gamma_{l,h}^{{\rm DG}})=a_h(W-\Pi_kW,\Gamma_l-\Gamma_{l,h}^{{\rm DG}}).
\end{align*}
On the subdomain $\Omega_j^3(x_l)$, the interpolation error estimates yield
\begin{align}
(\psi,\Gamma_l-\Gamma_{l,h}^{{\rm DG}})|_{\Omega_j^3(x_l)}\leq& Ch\Big(\Big(\sum_{F\in\mathcal{E}_h}\big(h_F
\|\{\nabla(\Gamma_l-\Gamma_{l,h}^{{\rm DG}})\cdot{\bm n}\}\|^2_{L^2(F\cap\Omega_j^3(x_l))}\nonumber\\
&+h_F^{-1}\|[\Gamma_l-\Gamma_{l,h}^{{\rm DG}}]\|_{L^2(F\cap\Omega_j^3(x_l))}^2\big)\Big)^{1/2}+\|\nabla(\Gamma_l-\Gamma_{l,h}^{{\rm DG}})\|_{L_h^2(\Omega_j^3(x_l))}\Big).\nonumber
\end{align}
On the subdomain $\Omega\backslash\Omega_j^3(z)$, \cite[Lemma 3.8]{lc2026} and the interpolation error estimates presented in \cite[Lemma 4.1]{glrs2009} yield
\begin{align}
(\psi,\Gamma_l-\Gamma_{l,h}^{{\rm DG}})|_{\Omega\backslash\Omega_j^3(x_l)}\leq& \Big(\|\nabla(W-\Pi_kW)\|_{L_h^{\infty}(\Omega\backslash\Omega_j^3(x_l))}+
h^{-1}\|W-\Pi_kW\|_{L^{\infty}(\Omega\backslash\Omega_j^3(x_l))}\Big)\nonumber\\
&\times\Big(\|\nabla(\Gamma_l-\Gamma_{l,h}^{{\rm DG}})\|_{L^1_h(\Omega)}+\sum_{F\in\mathcal{E}_h}\big(h_F\|\{\nabla(\Gamma_l-\Gamma_{l,h}^{{\rm DG}})\cdot{\bm n}\}\|_{L^1(F)}\nonumber\\
&+\|[\Gamma_l-\Gamma_{l,h}^{{\rm DG}}]\|_{L^1(F)}\big)\Big)\nonumber\\
\leq&Ch^{\gamma}d_j^{-\gamma-\frac{1}{2}}\Big(\|\nabla(\Gamma_l-\Gamma_{l,h}^{{\rm DG}})\|_{L^1_h(\Omega)}\nonumber\\
&+\sum_{F\in\mathcal{E}_h}\big(h_F\|\{\nabla(\Gamma_l-\Gamma_{l,h}^{{\rm DG}})\cdot{\bm n}\}\|_{L^1(F)}
+\|[\Gamma_l-\Gamma_{l,h}^{{\rm DG}}]\|_{L^1(F)}\big)\Big).\nonumber
\end{align}
Therefore, using the duality argument, (\ref{ddd:2}), and Lemma \ref{leee}, we obtain
\begin{align}
&\sum_{j=0}^I(hd_j^{-1})^{-s}d_j^{3(\frac{1}{q}-\frac{1}{2})}d_j^{-1}
\|\Gamma_l-\Gamma_{l,h}^{{\rm DG}}\|_{L^2(\Omega_j^1(x_l))}\nonumber\\
\leq& Ch^{\frac{3}{q}-\frac{1}{2}}|\Gamma_l|_{H^2(\Omega)}+C\sum_{j=0}^I(hd_j^{-1})^{-s}d_j^{3(\frac{1}{q}-\frac{1}{2})}hd_j^{-1}
\Big(\Big(\sum_{F\in\mathcal{E}_h}\big(h_F
\|\{\nabla(\Gamma_l-\Gamma_{l,h}^{{\rm DG}})\cdot{\bm n}\}\|^2_{L^2(F\cap\Omega_j(x_l))}\nonumber\\
&+h_F^{-1}\|[\Gamma_l-\Gamma_{l,h}^{{\rm DG}}]\|_{L^2(F\cap\Omega_j(x_l))}^2\big)\Big)^{1/2}+\|\nabla(\Gamma_l-\Gamma_{l,h}^{{\rm DG}})\|_{L_h^2(\Omega_j(x_l))}\Big)+
Ch^{-\frac{3}{p}}\sum_{j=0}^I(hd_j^{-1})^{\frac{3}{p}+\gamma-s}\nonumber\\
&\times\Big(\|\nabla(\Gamma_l-\Gamma_{l,h}^{{\rm DG}})\|_{L^1_h(\Omega)}
+\sum_{F\in\mathcal{E}_h}\big(h_F\|\{\nabla(\Gamma_l-\Gamma_{l,h}^{{\rm DG}})\cdot{\bm n}\}\|_{L^1(F)}
+\|[\Gamma_l-\Gamma_{l,h}^{{\rm DG}}]\|_{L^1(F)}\big)\Big),\nonumber
\end{align}
which, together with (\ref{rG-L1-IPDG-proof:ad1}) and (\ref{rG-L1-IPDG-proof:3}), leads to
\begin{align}
J_2\leq& Ch^{-\frac{3}{p}}\Big(\sum_{F\in\mathcal{E}_h}\big(h_F\|\{\nabla(\Gamma_l-\Gamma_{l,h}^{{\rm DG}})\cdot{\bm n}\}\|_{L^1(F)}+\|[\Gamma_l-\Gamma_{l,h}^{{\rm DG}}]\|_{L^1(F)}\big)
\label{rG-L1-IPDG-proof:5}\\
&+\|\nabla(\Gamma_l-\Gamma_{l,h}^{{\rm DG}})\|_{L^1_h(\Omega)}\Big)+C\begin{cases}
h^{-\frac{3}{p}},\quad &l=1,\\
h^{1-\frac{3}{p}},\quad & l=0,\ast,
\end{cases}\nonumber
\end{align}
by setting $M$ to be large enough.

Similar to the above proof that uses the dyadic decomposition of the domain $\Omega$ and the local energy estimates presented in Lemma \ref{rG-lee}, we can also prove
\begin{align*}
\|\nabla(\Gamma_l-\Gamma_{l,h}^{{\rm DG}})\|_{L^1_h(\Omega)}
+\sum_{F\in\mathcal{E}_h}\big(h_F\|\{\nabla(\Gamma_l-\Gamma_{l,h}^{{\rm DG}})\cdot{\bm n}\}\|_{L^1(F)}
+\|[\Gamma_l-\Gamma_{l,h}^{{\rm DG}}]\|_{L^1(F)}\big)\leq \begin{cases}
C,\quad & l=1,\\
Ch,\quad &l=0,\ast,
\end{cases}
\end{align*}
which, together with (\ref{rG-L1-IPDG-proof:1}), (\ref{rG-L1-IPDG-proof:2}) and (\ref{rG-L1-IPDG-proof:5}), concludes the proof.
Note that the above $L^1$ error estimates also can be found in \cite[Lemma 4.8 and Lemma A.2]{lq2026} for $l\in \{0,\ast\}$ and $l=1$ respectively.
\end{proof}

Based on (\ref{rG:1}), (\ref{rG0:1}), (\ref{rG1:1}), and Lemma \ref{rG-L1-IPDG}, for $2\leq p<\frac{1}{\epsilon}$ and $\frac{3}{p}<s<\gamma+\frac{3}{p}$, we have
\begin{align}
|\nabla(\Pi_ku-u_h^{{\rm DG}})(z)|\leq& Ch^{-\frac{3}{p}}\Big(\Big(\sum_{F\in\mathcal{E}_h}h_F^{1-p}\|\sigma_z^s[\Pi_ku-u]\|_{L^p(F)}^p+
h_F\|\sigma_z^s\{\nabla(\Pi_ku-u)\cdot{\bm n}\}\|_{L^p(F)}^p\Big)^{1/p}\nonumber\\
&+\|\sigma_z^s\nabla(\Pi_ku-u)\|_{L_h^p(\Omega)}\Big),\label{rG:2}
\end{align}
if $z\in K_z$ with $K_z\in\mathcal{T}_h$, and
\begin{align}
|[\Pi_ku-u_h^{{\rm DG}}](z)|\leq& Ch^{1-\frac{3}{p}}\Big(\Big(\sum_{F\in\mathcal{E}_h}h_F^{1-p}\|\sigma_{z}^s[\Pi_ku-u]\|_{L^p(F)}^p+
h_F\|\sigma_{z}^s\{\nabla(\Pi_ku-u)\cdot{\bm n}\}\|_{L^p(F)}^p\Big)^{1/p}\nonumber\\
&+\|\sigma_{z}^s\nabla(\Pi_ku-u)\|_{L_h^p(\Omega)}+h^{-1}\|\sigma_{z}^s(\Pi_ku-u)\|_{L^p(\Omega)}\Big),\label{rG:3}
\end{align}
if $z\in F$ with $F\in\mathcal{E}_h$. Then, using (\ref{rG:2}), (\ref{rG:3}), and Lemma \ref{fubini1}, we obtain
\begin{align*}
&\|\nabla(\Pi_ku-u_h^{\rm DG})\|_{L^p(\Omega)}^p+\sum_{F\in\mathcal{E}_h}h_F^{1-p}\|[\Pi_ku-u_h^{\rm DG}]\|_{L^p(F)}^p\\
=&\int_{\Omega}|\nabla(\Pi_ku-u_h^{\rm DG})(z)|^pdz+\sum_{F\in\mathcal{E}_h}h_F^{1-p}\int_F|[\Pi_ku-u_h^{\rm DG}](z)|^p dz\\
\leq&Ch^{-3}\int_{\Omega}\Big(\sum_{F\in\mathcal{E}_h}\big(h_F^{1-p}\|\sigma_z^s[\Pi_ku-u]\|_{L^p(F)}^p+
h_F\|\sigma_z^s\{\nabla(\Pi_ku-u)\cdot{\bm n}\}\|_{L^p(F)}^p\big)\nonumber\\
&+\|\sigma_z^s\nabla(\Pi_ku-u)\|_{L_h^p(\Omega)}^p\Big) dz+Ch^{-2}\sum_{F\in\mathcal{E}_h}\int_F\Big(\|\sigma_{z}^s\nabla(\Pi_ku-u)\|_{L_h^p(\Omega)}^p+h^{-p}\|\sigma_{z}^s(\Pi_ku-u)\|_{L^p(\Omega)}^p\nonumber\\
&+\sum_{F\in\mathcal{E}_h}\big(h_F^{1-p}\|\sigma_{z}^s[\Pi_ku-u]\|_{L^p(F)}^p+
h_F\|\sigma_{z}^s\{\nabla(\Pi_ku-u)\cdot{\bm n}\}\|_{L^p(F)}^p\big)\Big)dz\\
\leq& C\Big(h^{-p}\|u-\Pi_ku\|_{L^p(\Omega)}^p+\|\nabla(u-\Pi_ku)\|_{L_h^p(\Omega)}^p+h^{1+p\epsilon}|\nabla(u-\Pi_ku)|_{W^{\frac{1}{p}+\epsilon,p}_h(\Omega)}^p\Big),
\end{align*}
which, together with Lemma \ref{tra} with $\beta=1-\frac{1}{p}$ and the triangle inequality, yields (\ref{W1p-IPDG:1}) for the case $2\leq p<\frac{1}{\epsilon}$.

Next, we prove the case $\frac{1}{1-\epsilon}<p<2$. To this end, Let $v_h\in V_h^{{\rm DG}}$ be the solution of the following problem:
\begin{align*}
a_h(v_h,w_h)=&(|\nabla(\Pi_ku-u_h^{{\rm DG}})|^{p-2}\nabla(\Pi_ku-u_h^{{\rm DG}}),\nabla w_h)_{\mathcal{T}_h}\nonumber\\
&+\sum_{F\in\mathcal{E}_h}h_F^{1-p}\int_F|[\Pi_ku-u_h^{{\rm DG}}]|^{p-2}[\Pi_ku-u_h^{{\rm DG}}][w_h] ds,\quad \forall w_h\in V_h^{{\rm DG}}.
\end{align*}
Then, Lemma \ref{dfp}, the inf-sup condition (\ref{inf-sup-1}), and the H\"{o}lder's inequality yield
\begin{align*}
|||\Pi_ku-u_h^{{\rm DG}}|||_{W_h^{1,p}(\Omega)}^p=a_h(v_h,\Pi_ku-u_h^{\rm DG})=a_h(v_h,\Pi_ku-u),
\end{align*}
and
\begin{align*}
|||v_h|||_{W_h^{1,q}(\Omega)}\leq C|||\Pi_ku-u_h^{{\rm DG}}|||^{p-1}_{W_h^{1,p}(\Omega)}.
\end{align*}
Consequently, using Lemma \ref{tra} with $\beta=\epsilon$ and $\beta=1-\frac{1}{p}$, we have
\begin{align*}
&|||\Pi_ku-u_h^{{\rm DG}}|||_{W_h^{1,p}(\Omega)}\leq C\frac{a_h(v_h,\Pi_ku-u)}{|||v_h|||_{W_h^{1,q}(\Omega)}}\\
\leq& C\Big(\|\nabla(u-\Pi_ku)\|_{L_h^p(\Omega)}+h^{\frac{1}{p}+\epsilon}|\nabla(u-\Pi_ku)|_{W_h^{\frac{1}{p}+\epsilon,p}(\Omega)}\Big),
\end{align*}
which, together with Lemma \ref{tra} with $\beta=1-\frac{1}{p}$ and the triangle inequality, derives the result (\ref{W1p-IPDG:1}) for the case $\frac{1}{1-\epsilon}<p<2$. We conclude the proof.

\subsection{Proof of Theorem \ref{W1p-HDG-pre}}\label{sec5:2}
For $z\in K_z$ with $K_z\in\mathcal{T}_h$, let $\Gamma(z,x)$ be the regularized Green's function defined in (\ref{rG}). Then, by setting $\boldsymbol{\mathcal{R}}_1(z,x)=-\nabla\Gamma(z,x)$ and $\boldsymbol{\mathcal{R}}_2(z,x)=-\nabla\Gamma(z,x)-{\bm e}_i\delta_{h,z}$, we arrive at the following two mixed formulations:
\begin{align}
\boldsymbol{\mathcal{R}}_1+\nabla\Gamma=0\quad {\rm in}~\Omega,\quad \nabla\cdot\boldsymbol{\mathcal{R}}_1=\nabla\cdot({\bm e}_i\delta_{h,z})
\quad {\rm in}~\Omega,\quad \Gamma=0\quad {\rm on}~\partial\Omega,\label{rG-mixed1}
\end{align}
and
\begin{align}
\boldsymbol{\mathcal{R}}_2+\nabla\Gamma=-{\bm e}_i\delta_{h,z}\quad {\rm in}~\Omega,\quad \nabla\cdot\boldsymbol{\mathcal{R}}_2=0
\quad {\rm in}~\Omega,\quad \Gamma=0\quad {\rm on}~\partial\Omega.\label{rG-mixed2}
\end{align}
For the above two mixed formulations (\ref{rG-mixed1}) and (\ref{rG-mixed2}), we let $(\boldsymbol{\mathcal{R}}_{1,h},\Gamma_{1,h},\widehat{\Gamma}_{1,h})\in {\bm W}_h\times V_h\times\widehat{V}_h$ and $(\boldsymbol{\mathcal{R}}_{2,h},\Gamma_{2,h},\widehat{\Gamma}_{2,h})\in {\bm W}_h\times V_h\times\widehat{V}_h$ be their HDG approximations such that
\begin{align}
\mathcal{B}(\boldsymbol{\mathcal{R}}_{1,h},\Gamma_{1,h},\widehat{\Gamma}_{1,h}; {\bm w}_h,v_h,\widehat{v}_h)=-(\nabla\cdot({\bm e}_i\delta_{h,z}),v_h)_{\mathcal{T}_h},
\quad \forall ({\bm w}_h,v_h,\widehat{v}_h)\in {\bm W}_h\times V_h\times\widehat{V}_h,\label{rG-HDG1}
\end{align}
and
\begin{align}
\mathcal{B}(\boldsymbol{\mathcal{R}}_{2,h},\Gamma_{2,h},\widehat{\Gamma}_{2,h}; {\bm w}_h,v_h,\widehat{v}_h)=-({\bm e}_i\delta_{h,z},{\bm w}_h)_{\mathcal{T}_h},
\quad \forall ({\bm w}_h,v_h,\widehat{v}_h)\in {\bm W}_h\times V_h\times\widehat{V}_h.\label{rG-HDG2}
\end{align}
Then, using Lemma \ref{dfp}, (\ref{ddd:2}), and the definitions of $\mathcal{B}$ and $\delta_{h,z}$, we have for $2\leq p<\frac{1}{\epsilon}$
\begin{align}
&({\bm e}_i\cdot\nabla(\Pi_Vu-u_h))(z)=-(\nabla\cdot({\bm e}_i\delta_{h,z}),\Pi_Vu-u_h)_{\mathcal{T}_h}\nonumber\\
=&\mathcal{B}(\boldsymbol{\mathcal{R}}_{1,h}-\boldsymbol{\mathcal{R}}_{1},\Gamma_{1,h}-\Gamma,\widehat{\Gamma}_{1,h}-\Gamma;
\Pi_W{\bm q}-{\bm q},\Pi_Vu-u,\Pi_{\widehat{V}}u-u)\nonumber\\
&+\mathcal{B}(\boldsymbol{\mathcal{R}}_{1},\Gamma,\Gamma;\Pi_W{\bm q}-{\bm q},\Pi_Vu-u,\Pi_{\widehat{V}}u-u)\nonumber\\
=&(\boldsymbol{\mathcal{R}}_{1,h}-\boldsymbol{\mathcal{R}}_{1},\Pi_W{\bm q}-{\bm q})_{\mathcal{T}_h}+(\nabla(\Gamma_{1,h}-\Gamma),\Pi_W{\bm q}-{\bm q})_{\mathcal{T}_h}\nonumber\\
&-\langle\Gamma_{1,h}-\widehat{\Gamma}_{1,h},(\Pi_W{\bm q}-{\bm q})\cdot{\bm n}\rangle_{\partial\mathcal{T}_h}+(\boldsymbol{\mathcal{R}}_{1,h}-\boldsymbol{\mathcal{R}}_{1},\nabla(\Pi_Vu-u))_{\mathcal{T}_h}\label{rG-HDG1:1}\\
&-\langle(\boldsymbol{\mathcal{R}}_{1,h}-\boldsymbol{\mathcal{R}}_{1})\cdot{\bm n}, (\Pi_Vu-u)\rangle_{\partial\mathcal{T}_h}
-\langle\tau(\mathcal{L}\Gamma_{1,h}-\widehat{\Gamma}_{1,h}),\mathcal{L}(\Pi_Vu-u)\rangle_{\partial\mathcal{T}_h}\nonumber\\
&-(\nabla\cdot({\bm e}_i\delta_{h,z}),\Pi_Vu-u)_{\mathcal{T}_h}\nonumber\\
\leq&C\Big(\|\sigma_z^s(\Pi_W{\bm q}-{\bm q})\|_{L^p(\Omega)}+\|\sigma_z^s\nabla(\Pi_Vu-u)\|_{L_h^p(\Omega)}\nonumber\\
&+\Big(\sum_{K\in\mathcal{T}_h}h_K\|\sigma_z^s(\Pi_W{\bm q}-{\bm q})\cdot{\bm n}\|^p_{L^p(\partial K)}+h_K^{1-p}\|\sigma_z^s(\Pi_Vu-u)\|_{L^p(\partial K)}^p\Big)^{1/p}\Big)\nonumber\\
&\times\Big(h^{-\frac{3}{p}}+\|\sigma_z^{-s}(\boldsymbol{\mathcal{R}}_{1,h}-\boldsymbol{\mathcal{R}}_{1})\|_{L^q(\Omega)}
+\|\sigma_z^{-s}\nabla(\Gamma_{1,h}-\Gamma)\|_{L_h^q(\Omega)}\nonumber\\
&+\Big(\sum_{K\in\mathcal{T}_h}h_K\|\sigma_z^{-s}(\boldsymbol{\mathcal{R}}_{1,h}-\boldsymbol{\mathcal{R}}_{1})\cdot{\bm n})\|_{L^q(\partial K)}^q+h_K^{1-q}\|\sigma_z^{-s}(\Gamma_{1,h}-\widehat{\Gamma}_{1,h})\|_{L^q(\partial K)}^q\Big)^{1/q}\Big),\nonumber
\end{align}
and
\begin{align}
&-({\bm e}_i\cdot(\Pi_W{\bm q}-{\bm q}_h))(z)=-({\bm e}_i\delta_{h,z},\Pi_W{\bm q}-{\bm q}_h)_{\mathcal{T}_h}\nonumber\\
=&\mathcal{B}(\boldsymbol{\mathcal{R}}_{2,h}-\boldsymbol{\mathcal{R}}_{2},\Gamma_{2,h}-\Gamma,\widehat{\Gamma}_{2,h}-\Gamma;
\Pi_W{\bm q}-{\bm q},\Pi_Vu-u,\Pi_{\widehat{V}}u-u)\nonumber\\
&+\mathcal{B}(\boldsymbol{\mathcal{R}}_{2},\Gamma,\Gamma;\Pi_W{\bm q}-{\bm q},\Pi_Vu-u,\Pi_{\widehat{V}}u-u)\nonumber\\
\leq&C\Big(\|\sigma_z^s(\Pi_W{\bm q}-{\bm q})\|_{L^p(\Omega)}+\|\sigma_z^s\nabla(\Pi_Vu-u)\|_{L_h^p(\Omega)}\label{rG-HDG2:1}\\
&+\Big(\sum_{K\in\mathcal{T}_h}h_K\|\sigma_z^s(\Pi_W{\bm q}-{\bm q})\cdot{\bm n}\|^p_{L^p(\partial K)}+h_K^{1-p}\|\sigma_z^s(\Pi_Vu-u)\|_{L^p(\partial K)}^p\Big)^{1/p}\Big)\nonumber\\
&\times\Big(h^{-\frac{3}{p}}+\|\sigma_z^{-s}(\boldsymbol{\mathcal{R}}_{2,h}-\boldsymbol{\mathcal{R}}_{2})\|_{L^q(\Omega)}
+\|\sigma_z^{-s}\nabla(\Gamma_{2,h}-\Gamma)\|_{L_h^q(\Omega)}\nonumber\\
&+\Big(\sum_{K\in\mathcal{T}_h}h_K\|\sigma_z^{-s}(\boldsymbol{\mathcal{R}}_{2,h}-\boldsymbol{\mathcal{R}}_{2})\cdot{\bm n})\|_{L^q(\partial K)}^q+h_K^{1-q}\|\sigma_z^{-s}(\Gamma_{2,h}-\widehat{\Gamma}_{2,h})\|_{L^q(\partial K)}^q\Big)^{1/q}\Big),\nonumber
\end{align}
where $s$ is any positive real number, $q$ is the conjugate number of $p$ such that $\frac{1}{p}+\frac{1}{q}=1$, and $\sigma_z$ is a weight function defined in (\ref{swf}).

Next, for $x_0\in F$ with $F\in\mathcal{E}_h^o$ and $x_{\ast}\in F$ with $F\in\mathcal{E}_h^{\partial}$, let $\Gamma_0(x_0,x)$ and
$\Gamma_{\ast}(x_{\ast},x)$ be the interface Green's functions defined in (\ref{rG0}) and (\ref{rG1}) respectively. Then, by setting $\boldsymbol{\mathcal{R}}_0=-\nabla\Gamma_0$
and $\boldsymbol{\mathcal{R}}_{\ast}=-\nabla\Gamma_{\ast}$, we arrive at the following two mixed formulations:
\begin{align}
\boldsymbol{\mathcal{R}}_0+\nabla\Gamma_0=0\quad {\rm in}~\Omega,\quad \nabla\cdot\boldsymbol{\mathcal{R}}_0=\delta_{h,K^+}-\delta_{h,K^-}\quad
{\rm in}~\Omega,\quad \Gamma_0=0\quad {\rm on}~\partial\Omega,\label{rG0-mixed}
\end{align}
and
\begin{align}
\boldsymbol{\mathcal{R}}_{\ast}+\nabla\Gamma_{\ast}=0\quad {\rm in}~\Omega,\quad \nabla\cdot\boldsymbol{\mathcal{R}}_{\ast}=\delta_{h,x_{\ast}}\quad
{\rm in}~\Omega,\quad \Gamma_{\ast}=0\quad {\rm on}~\partial\Omega,\label{rG1-mixed}
\end{align}
For the above two mixed formulations (\ref{rG0-mixed}) and (\ref{rG1-mixed}), let $(\boldsymbol{\mathcal{R}}_{0,h},\Gamma_{0,h},\widehat{\Gamma}_{0,h})\in {\bm W}_h\times V_h\times \widehat{V}_h$ and $(\boldsymbol{\mathcal{R}}_{\ast,h},\Gamma_{\ast,h},\widehat{\Gamma}_{\ast,h})\in {\bm W}_h\times V_h\times \widehat{V}_h$ be their HDG approximations such that
\begin{align}
\mathcal{B}(\boldsymbol{\mathcal{R}}_{0,h},\Gamma_{0,h},\widehat{\Gamma}_{0,h}; {\bm w}_h,v_h,\widehat{v}_h)=-(\delta_{h,K^+}-\delta_{h,K^-},v_h)_{\mathcal{T}_h},
\quad \forall ({\bm w}_h,v_h,\widehat{v}_h)\in {\bm W}_h\times V_h\times\widehat{V}_h,\label{rG0-HDG}
\end{align}
and
\begin{align}
\mathcal{B}(\boldsymbol{\mathcal{R}}_{\ast,h},\Gamma_{\ast,h},\widehat{\Gamma}_{\ast,h}; {\bm w}_h,v_h,\widehat{v}_h)=-(\delta_{h,x_{\ast}},v_h)_{\mathcal{T}_h},
\quad \forall ({\bm w}_h,v_h,\widehat{v}_h)\in {\bm W}_h\times V_h\times\widehat{V}_h.\label{rG1-HDG}
\end{align}
Similar to (\ref{rG-HDG1:1}) and (\ref{rG-HDG2:1}), by virtue of Lemma \ref{dfp}, (\ref{ddd:2}), and (\ref{rG0-mixed})-(\ref{rG1-HDG}), we obtain
for $2\leq p<\frac{1}{\epsilon}$
\begin{align}
&-[\Pi_Vu-u_h](x_0)=-(\delta_{h,K^+}-\delta_{h,K^-}, \Pi_Vu-u_h)_{\mathcal{T}_h}\nonumber\\
=&\mathcal{B}(\boldsymbol{\mathcal{R}}_{0,h}-\boldsymbol{\mathcal{R}}_0,\Gamma_{0,h}-\Gamma_0,\widehat{\Gamma}_{0,h}-\Gamma_0;\Pi_W{\bm q}-{\bm q},\Pi_Vu-u,\Pi_{\widehat{V}}u-u)\nonumber\\
&-(\delta_{h,K^+}-\delta_{h,K^-},\Pi_Vu-u)+\langle\tau(\mathcal{L}\Gamma_0-\Gamma_0),\Pi_{\widehat{V}}u-u\rangle_{\partial\mathcal{T}_h}\nonumber\\
\leq&C\Big(\|\sigma_{x_0}^s(\Pi_W{\bm q}-{\bm q})\|_{L^p(\Omega)}+\|\sigma_{x_0}^s\nabla(\Pi_Vu-u)\|_{L_h^p(\Omega)}+h^{-1}\|\sigma_{x_0}^s(\Pi_Vu-u)\|_{L^p(\Omega)}\label{rG0-HDG:1}\\
&+\Big(\sum_{K\in\mathcal{T}_h}h_K\|\sigma_{x_0}^s(\Pi_W{\bm q}-{\bm q})\cdot{\bm n}\|^p_{L^p(\partial K)}+h_K^{1-p}\|\sigma_{x_0}^s(\Pi_Vu-u)\|_{L^p(\partial K)}^p\Big)^{1/p}\Big)\nonumber\\
&\times\Big(h^{1-\frac{3}{p}}+\|\sigma_{x_0}^{-s}(\boldsymbol{\mathcal{R}}_{0,h}-\boldsymbol{\mathcal{R}}_{0})\|_{L^q(\Omega)}
+\|\sigma_{x_0}^{-s}\nabla(\Gamma_{0,h}-\Gamma_{0})\|_{L_h^q(\Omega)}\nonumber\\
&+\Big(\sum_{K\in\mathcal{T}_h}h_K\|\sigma_{x_0}^{-s}(\boldsymbol{\mathcal{R}}_{0,h}-\boldsymbol{\mathcal{R}}_{0})\cdot{\bm n})\|_{L^q(\partial K)}^q+h_K^{1-q}\|\sigma_{x_0}^{-s}(\Gamma_{0,h}-\widehat{\Gamma}_{0,h})\|_{L^q(\partial K)}^q\Big)^{1/q}\Big),\nonumber
\end{align}
and
\begin{align}
&-[\Pi_Vu-u_h](x_{\ast})=-(\delta_{h,x_{\ast}}, \Pi_Vu-u_h)_{\mathcal{T}_h}\nonumber\\
=&\mathcal{B}(\boldsymbol{\mathcal{R}}_{\ast,h}-\boldsymbol{\mathcal{R}}_{\ast},\Gamma_{\ast,h}-\Gamma_{\ast},\widehat{\Gamma}_{\ast,h}-\Gamma_{\ast};
\Pi_W{\bm q}-{\bm q},\Pi_Vu-u,\Pi_{\widehat{V}}u-u)\nonumber\\
&-(\delta_{h,x_{\ast}},\Pi_Vu-u)+\langle\tau(\mathcal{L}\Gamma_{\ast}-\Gamma_{\ast}),\Pi_{\widehat{V}}u-u\rangle_{\partial\mathcal{T}_h}\nonumber\\
\leq&C\Big(\|\sigma_{x_{\ast}}^s(\Pi_W{\bm q}-{\bm q})\|_{L^p(\Omega)}+\|\sigma_{x_{\ast}}^s\nabla(\Pi_Vu-u)\|_{L_h^p(\Omega)}+h^{-1}\|\sigma_{x_{\ast}}^s(\Pi_Vu-u)\|_{L^p(\Omega)}\label{rG1-HDG:1}\\
&+\Big(\sum_{K\in\mathcal{T}_h}h_K\|\sigma_{x_{\ast}}^s(\Pi_W{\bm q}-{\bm q})\cdot{\bm n}\|^p_{L^p(\partial K)}+h_K^{1-p}\|\sigma_{x_{\ast}}^s(\Pi_Vu-u)\|_{L^p(\partial K)}^p\Big)^{1/p}\Big)\nonumber\\
&\times\Big(h^{1-\frac{3}{p}}+\|\sigma_{x_{\ast}}^{-s}(\boldsymbol{\mathcal{R}}_{\ast,h}-\boldsymbol{\mathcal{R}}_{\ast})\|_{L^q(\Omega)}
+\|\sigma_{x_{\ast}}^{-s}\nabla(\Gamma_{\ast,h}-\Gamma_{\ast})\|_{L_h^q(\Omega)}\nonumber\\
&+\Big(\sum_{K\in\mathcal{T}_h}h_K\|\sigma_{x_{\ast}}^{-s}(\boldsymbol{\mathcal{R}}_{\ast,h}-\boldsymbol{\mathcal{R}}_{\ast})\cdot{\bm n})\|_{L^q(\partial K)}^q+h_K^{1-q}\|\sigma_{x_{\ast}}^{-s}(\Gamma_{\ast,h}-\widehat{\Gamma}_{\ast,h})\|_{L^q(\partial K)}^q\Big)^{1/q}\Big).\nonumber
\end{align}

To further estimate (\ref{rG-HDG1:1}), (\ref{rG-HDG2:1}), (\ref{rG0-HDG:1}) and (\ref{rG1-HDG:1}), based on Lemma \ref{rG-lee}, we prove the following results.
\begin{lemma}\label{rG01-HDG-L1}
For $z\in K_z$ with $K_z\in\mathcal{T}_h$, let $(\boldsymbol{\mathcal{R}}_1,\Gamma)$ and $(\boldsymbol{\mathcal{R}}_2,\Gamma)$ be the solutions of problems
(\ref{rG-mixed1}) and (\ref{rG-mixed2}), and let $(\boldsymbol{\mathcal{R}}_{1,h},\Gamma_{1,h},\widehat{\Gamma}_{1,h})$ and $(\boldsymbol{\mathcal{R}}_{2,h},\Gamma_{2,h},\widehat{\Gamma}_{2,h})$ be their HDG approximations defined in (\ref{rG-HDG1}) and (\ref{rG-HDG2}). Then, we have for $l\in\{1,2\}$
\begin{align}
&\Big(\sum_{K\in\mathcal{T}_h}h_K\|\sigma_z^{-s}(\boldsymbol{\mathcal{R}}_{l,h}-\boldsymbol{\mathcal{R}}_{l})\cdot{\bm n})\|_{L^q(\partial K)}^q+h_K^{1-q}\|\sigma_z^{-s}(\Gamma_{l,h}-\widehat{\Gamma}_{l,h})\|_{L^q(\partial K)}^q\Big)^{1/q}\nonumber\\
&+\|\sigma_z^{-s}(\boldsymbol{\mathcal{R}}_{l,h}-\boldsymbol{\mathcal{R}}_{l})\|_{L^q(\Omega)}
+\|\sigma_z^{-s}\nabla(\Gamma_{l,h}-\Gamma)\|_{L_h^q(\Omega)}\leq Ch^{-\frac{3}{p}}.
\end{align}
Furthermore, for $x_0\in F$ with $F\in\mathcal{E}_h^o$ and $x_{\ast}\in F$ with $F\in\mathcal{E}_h^{\partial}$, let $(\boldsymbol{\mathcal{R}}_0,\Gamma_0)$
and $(\boldsymbol{\mathcal{R}}_{\ast},\Gamma_{\ast})$ be the solutions of problems (\ref{rG0-mixed}) and (\ref{rG1-mixed}), and let
$(\boldsymbol{\mathcal{R}}_{0,h},\Gamma_{0,h},\widehat{\Gamma}_{0,h})$ and $(\boldsymbol{\mathcal{R}}_{\ast,h},\Gamma_{\ast,h},\widehat{\Gamma}_{\ast,h})$
be their HDG approximations defined in (\ref{rG0-HDG}) and (\ref{rG1-HDG}). Then, we have for $l\in\{0,\ast\}$
\begin{align}
&\Big(\sum_{K\in\mathcal{T}_h}h_K\|\sigma_{x_{l}}^{-s}(\boldsymbol{\mathcal{R}}_{l,h}-\boldsymbol{\mathcal{R}}_{l})\cdot{\bm n})\|_{L^q(\partial K)}^q+h_K^{1-q}\|\sigma_{x_{l}}^{-s}(\Gamma_{l,h}-\widehat{\Gamma}_{l,h})\|_{L^q(\partial K)}^q\Big)^{1/q}\nonumber\\
&+\|\sigma_{x_{l}}^{-s}(\boldsymbol{\mathcal{R}}_{l,h}-\boldsymbol{\mathcal{R}}_{l})\|_{L^q(\Omega)}
+\|\sigma_{x_{l}}^{-s}\nabla(\Gamma_{l,h}-\Gamma_l)\|_{L_h^q(\Omega)}\leq Ch^{1-\frac{3}{p}}.
\end{align}
Here, $s$ is a real number such that $\frac{3}{p}<s<\gamma+\frac{3}{p}$ for $2\leq p<\frac{1}{\epsilon}$, $\gamma\in(0,1)$ is a positive constant depending on the domain $\Omega$, and $q$ is the conjugate number of $p$ such that $\frac{1}{p}+\frac{1}{q}=1$.
\end{lemma}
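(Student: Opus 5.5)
We mirror the proof of Lemma \ref{rG-L1-IPDG}, replacing the IPDG local and global energy estimates with the HDG ones in Lemma \ref{lee}(2) and Lemma \ref{leee}(2). For the proof we set $x_1=x_2=z$, $\Gamma_1=\Gamma_2=\Gamma$, and let $l\in\{0,1,2,\ast\}$. Using (\ref{rG-L1-IPDG-proof:ad2})--(\ref{rG-L1-IPDG-proof:ad3}), we split the weighted $L^q$ quantity into a near-field term $J_1$ on $\Omega_\ast(x_l)$, with weight factor $CM^s$, and a far-field term $J_2=\sum_{j}(hd_j^{-1})^{-s}(\cdots)$ over the dyadic annuli $\Omega_j(x_l)$. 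In each annulus, H\"older's inequality turns $L^q$ into $L^2$ at the cost of $d_j^{3(\frac1q-\frac12)}$. The boundary terms $h_K\|(\boldsymbol{\mathcal R}_{l,h}-\boldsymbol{\mathcal R}_l)\cdot{\bm n}\|_{L^q(\partial K)}^q$ are reduced to volume terms by Lemma \ref{tra} with $\beta=1-\frac1q$, together with Lemma \ref{inv} applied to $\boldsymbol{\mathcal R}_{l,h}-\Pi_W\boldsymbol{\mathcal R}_l$. The term $h_K^{1-q}\|\Gamma_{l,h}-\widehat\Gamma_{l,h}\|_{L^q(\partial K)}^q$ is controlled by Lemma \ref{inequality} in terms of $\|\boldsymbol{\mathcal R}_{l,h}\|$ and the $\tau$-stabilization term, both of which appear in Lemma \ref{lee}(2).

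For $J_1$, the global estimate Lemma \ref{leee}(2) gives $J_1\le Ch^{\frac3q-\frac12}\big(|\Gamma_l|_{H^2(\Omega)}+h^{-1}\|\boldsymbol{\mathcal R}_l-\Pi_W\boldsymbol{\mathcal R}_l\|_{L^2}\big)$. For $l=2$ we must handle $\boldsymbol{\mathcal R}_2=-\nabla\Gamma-{\bm e}_i\delta_{h,z}$. The delta part is a smooth function supported in $K_z$ with $\|{\bm e}_i\delta_{h,z}-\Pi_W({\bm e}_i\delta_{h,z})\|_{L^2}\le Ch\,|\delta_{h,z}|_{H^1}\le Ch^{-3/2}$ by (\ref{ddd:2}), so it contributes at the same order $h^{-5/2}$ as $|\Gamma|_{H^2}$. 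Combined with (\ref{rG-L1-IPDG-proof:ad1}), this yields $J_1\le Ch^{-3/p}$ for $l=1,2$ and $J_1\le Ch^{1-3/p}$ for $l=0,\ast$.

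For $J_2$, the support of $\delta_{h,z}$ lies in $\Omega_\ast$, so on $\Omega_j^2$ we have $\boldsymbol{\mathcal R}_2=\boldsymbol{\mathcal R}_1=-\nabla\Gamma$. We apply Lemma \ref{lee}(2) with $\Omega_0=\Omega_j$, $\Omega_1=\Omega_j^1$ and $\rho\sim d_j$. This produces three kinds of contribution. First, the approximation terms are bounded by $Ch|\Gamma_l|_{H^2(\Omega_j^2)}$, and Lemma \ref{rG-lee} turns them into geometric sums $\sum_j (hd_j^{-1})^{1+\frac3p-s}$, which converge since $s<\frac3p+1$. Second, the term $h\rho^{-1}(\cdots)_{\Omega_j^1}$ is reabsorbed into the neighbouring annuli, which is possible once $M$ is large, because $hd_j^{-1}\le M^{-1}$. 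Third, there is a pollution term $d_j^{-1}\|\Gamma_l-\Gamma_{l,h}\|_{L^2(\Omega_j^1)}$. We estimate the pollution term by duality with $W$ from (\ref{rG-L1-IPDG-proof:4}), writing $(\psi,\Gamma_l-\Gamma_{l,h})=\mathcal B(\boldsymbol{\mathcal R}_{l}-\boldsymbol{\mathcal R}_{l,h},\Gamma_l-\Gamma_{l,h},\Gamma_l-\widehat\Gamma_{l,h};\,-\nabla W-\Pi_W(-\nabla W),\,W-\Pi_VW,\,W-\Pi_{\widehat V}W)$ via Galerkin orthogonality. Near $\Omega_j^3$ we use local approximation, which gives an $hd_j^{-1}$ times local-energy term that is again absorbed. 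Away from $\Omega_j^3$ we use the $W^{1,\infty}$ bound $\|\nabla(W-\Pi W)\|_{L^\infty}+h^{-1}\|W-\Pi W\|_{L^\infty}\le Ch^\gamma d_j^{-\gamma-1/2}$, which rests on (\ref{pe-G:1}) and \cite[Lemma 3.8]{lc2026}. The result is $J_2\le Ch^{-3/p}\sum_j(hd_j^{-1})^{\frac3p+\gamma-s}E_{1}$ plus the bound claimed in the lemma. Here $E_1$ is the $L^1$-type HDG error of $(\boldsymbol{\mathcal R}_l-\boldsymbol{\mathcal R}_{l,h},\nabla(\Gamma_l-\Gamma_{l,h}))$ together with its face terms. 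The sum converges precisely because $s<\gamma+\frac3p$, which gives $J_2\le Ch^{-3/p}E_1+{}$(claimed bound).

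The main obstacle is the final $L^1$ estimate: $E_1\le C$ for $l=1,2$ and $E_1\le Ch$ for $l=0,\ast$. We prove it by repeating the same dyadic argument with $q=1$, i.e. no weight and Cauchy--Schwarz on each annulus, giving the factor $d_j^{3/2}$. The duality step for the pollution term then produces a geometric factor $(hd_j^{-1})^{\gamma}$ and a term that is absorbed for $M$ large. For $l=0,\ast$ we instead cite the HDG counterparts in \cite{lq2026}. The delicate point is verifying that the HDG local energy estimate Lemma \ref{lee}(2), whose right-hand side again contains the $\boldsymbol{\mathcal R}$ and $\tau$ errors on the larger set $\Omega_j^1$, closes under the kickback argument. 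This requires choosing $M$ large so that the factor $h\rho^{-1}\le M^{-1}$ allows absorption into the left-hand side summed over $j$.
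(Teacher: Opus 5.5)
Your proposal is essentially the paper's proof. It uses the same dyadic splitting: a near-field term controlled by Lemma \ref{leee}(2), and a far-field sum controlled by Lemma \ref{lee}(2), Lemma \ref{rG-lee} and a kickback for $M$ large. It also uses the same mixed duality argument, whose off-diagonal part gives $h^{-3/p}\sum_j(hd_j^{-1})^{\gamma+\frac{3}{p}-s}$ times the $L^1$ error, with those $L^1$ bounds quoted or proved as in the paper; your explicit treatment of the ${\bm e}_i\delta_{h,z}$ part of $\boldsymbol{\mathcal{R}}_2$ in the near field is a detail the paper leaves implicit.

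The one step to state correctly is the control of $\Gamma_{l,h}-\widehat{\Gamma}_{l,h}$. Lemma \ref{inequality} must be applied to the projected error triple $(\Pi_W\boldsymbol{\mathcal{R}}_l-\boldsymbol{\mathcal{R}}_{l,h},\,\Pi_V\Gamma_l-\Gamma_{l,h},\,\Pi_{\widehat{V}}\Gamma_l-\widehat{\Gamma}_{l,h})$, which satisfies (\ref{HDG:1}). This yields (\ref{rG01-HDG-L1-proof:2}), a bound in terms of $\|\boldsymbol{\mathcal{R}}_l-\boldsymbol{\mathcal{R}}_{l,h}\|_{L^2(K)}$, the $\tau$-term and approximation errors. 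As you wrote it, a bound in terms of $\|\boldsymbol{\mathcal{R}}_{l,h}\|_{L^2(K)}$ itself is not small and would make the weighted far-field sum blow up.
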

\begin{proof}
In the proof, we let $x_1=x_2=z$ and $\Gamma_1=\Gamma_2=\Gamma$. Then, for $l\in\{0,1,2,\ast\}$, the dyadic decomposition of the domain $\Omega$, (\ref{rG-L1-IPDG-proof:ad2}), and (\ref{rG-L1-IPDG-proof:ad3}) yield
\begin{align}
&\Big(\sum_{K\in\mathcal{T}_h}h_K\|\sigma_{x_{l}}^{-s}(\boldsymbol{\mathcal{R}}_{l,h}-\boldsymbol{\mathcal{R}}_{l})\cdot{\bm n})\|_{L^q(\partial K)}^q+h_K^{1-q}\|\sigma_{x_{l}}^{-s}(\Gamma_{l,h}-\widehat{\Gamma}_{l,h})\|_{L^q(\partial K)}^q\Big)^{1/q}\nonumber\\
&+\|\sigma_{x_{l}}^{-s}(\boldsymbol{\mathcal{R}}_{l,h}-\boldsymbol{\mathcal{R}}_{l})\|_{L^q(\Omega)}
+\|\sigma_{x_{l}}^{-s}\nabla(\Gamma_{l,h}-\Gamma_l)\|_{L_h^q(\Omega)}\label{rG01-HDG-L1-proof:1}\\
&\leq CM^s\mathcal{F}_{\Omega_{\ast}(x_l)}+C\sum_{j=0}^I(hd_j^{-1})^{-s}\mathcal{F}_{\Omega_j(x_l)}= J_3+J_4,\nonumber
\end{align}
where
\begin{align*}
\mathcal{F}_{\Omega_{\ast}(x_l)}=&\Big(\sum_{K\in\mathcal{T}_h}h_K\|(\boldsymbol{\mathcal{R}}_{l,h}-\boldsymbol{\mathcal{R}}_{l})\cdot{\bm n})\|_{L^q(\partial K\cap\Omega_{\ast}(x_l))}^q+h_K^{1-q}\|\Gamma_{l,h}-\widehat{\Gamma}_{l,h}\|_{L^q(\partial K\cap\Omega_{\ast}(x_l))}^q\Big)^{1/q}\\
&+\|\boldsymbol{\mathcal{R}}_{l,h}-\boldsymbol{\mathcal{R}}_{l}\|_{L^q(\Omega_{\ast}(x_l))}
+\|\nabla(\Gamma_{l,h}-\Gamma_l)\|_{L_h^q(\Omega_{\ast}(x_l))}.
\end{align*}
On the other hand, for each $K\in\mathcal{T}_h$, Lemma \ref{inequality} yields
\begin{align}
h_K^{-1/2}\|\Gamma_{l,h}-\widehat{\Gamma}_{l,h}\|_{L^2(\partial K)}\leq& C\Big(\|\boldsymbol{\mathcal{R}}_l-\Pi_W\boldsymbol{\mathcal{R}}_l\|_{L^2(K)}+
\|\boldsymbol{\mathcal{R}}_l-\boldsymbol{\mathcal{R}}_{l,h}\|_{L^2(K)}\nonumber\\
&+\|\sqrt{\tau}(\mathcal{L}\Gamma_{l,h}-\widehat{\Gamma}_{l,h})\|_{L^2(\partial K)}+\|\nabla(\Gamma_l-\Pi_V\Gamma_l)\|_{L^2(K)}\label{rG01-HDG-L1-proof:2}\\
&+\|\nabla(\Gamma_l-\Pi_k\Gamma_l)\|_{L^2(K)}\Big).\nonumber
\end{align}
Note that the above estimate has been established in \cite[(4.47)]{lq2026}.

Similar to the estimate of $J_1$ in (\ref{rG-L1-IPDG-proof:2}), using Lemma \ref{leee}, Lemma \ref{inv}, Lemma \ref{tra} with $\beta=1-\frac{1}{q}$, (\ref{ddd:2}), (\ref{rG01-HDG-L1-proof:2}) and interpolation error estimates, we have
\begin{align}
J_3\leq& CM^s(Mh)^{3(\frac{1}{q}-\frac{1}{2})}\Big(\mathcal{K}_{\mathcal{T}_h(\Omega_{\ast}(x_l))}
+\|\boldsymbol{\mathcal{R}}_l-\Pi_W\boldsymbol{\mathcal{R}}_l\|_{L^2(\mathcal{T}_h(\Omega_{\ast}(x_l)))}\nonumber\\
&+h\|\nabla(\boldsymbol{\mathcal{R}}_l-\Pi_W\boldsymbol{\mathcal{R}}_l)\|_{L_h^2(\mathcal{T}_h(\Omega_{\ast}(x_l)))}+
\|\nabla(\Gamma_l-\Pi_V\Gamma_l)\|_{L_h^2(\mathcal{T}_h(\Omega_{\ast}(x_l)))}\nonumber\\
&+\|\nabla(\Gamma_l-\Pi_k\Gamma_l)\|_{L_h^2(\mathcal{T}_h(\Omega_{\ast}(x_l)))}\Big)\label{rG01-HDG-L1-proof:3}\\
\leq& CM^s(Mh)^{3(\frac{1}{q}-\frac{1}{2})}h|\Gamma_l|_{H^2(\Omega)}\leq C
\begin{cases}
h^{-\frac{3}{p}},\quad &l=1,2,\\
h^{1-\frac{3}{p}},\quad & l=0,\ast,
\end{cases}\nonumber
\end{align}
where
\begin{align*}
\mathcal{K}_{\mathcal{T}_h(\Omega_{\ast}(x_l))}=&\|\boldsymbol{\mathcal{R}}_{l,h}
-\boldsymbol{\mathcal{R}}_{l}\|_{L^2(\mathcal{T}_h(\Omega_{\ast}(x_l)))}
+\|\nabla(\Gamma_{l,h}-\Gamma_l)\|_{L_h^2(\mathcal{T}_h(\Omega_{\ast}(x_l)))}\\
&+\Big(\sum_{K\in\mathcal{T}_h(\Omega_{\ast}(x_l))}\|\sqrt{\tau}(\mathcal{L}\Gamma_{l,h}-\widehat{\Gamma}_{l,h})\|_{L^2(\partial K)}^2\Big)^{1/2}.
\end{align*}

As for $J_4$, using (\ref{rG01-HDG-L1-proof:2}), Lemma \ref{lee}-\ref{inv}, Lemma \ref{tra} with $\beta=1-\frac{1}{q}$, Lemma \ref{rG-lee} and interpolation error estimates, we have
\begin{align}
J_4\leq&C\sum_{j=0}^I(hd_j^{-1})^{-s}d_j^{3(\frac{1}{q}-\frac{1}{2})}\Big(\mathcal{K}_{\mathcal{T}_h(\Omega_j(x_l))}
+\|\boldsymbol{\mathcal{R}}_l-\Pi_W\boldsymbol{\mathcal{R}}_l\|_{L^2(\mathcal{T}_h(\Omega_j(x_l)))}\nonumber\\
&+h\|\nabla(\boldsymbol{\mathcal{R}}_l-\Pi_W\boldsymbol{\mathcal{R}}_l)\|_{L^2(\mathcal{T}_h(\Omega_j(x_l)))}+
\|\nabla(\Gamma_l-\Pi_V\Gamma_l)\|_{L^2(\mathcal{T}_h(\Omega_j(x_l)))}\nonumber\\
&+\|\nabla(\Gamma_l-\Pi_k\Gamma_l)\|_{L^2(\mathcal{T}_h(\Omega_j(x_l)))}\Big)\nonumber\\
\leq& C(Mh)^{3(\frac{1}{q}-\frac{1}{2})}\mathcal{K}_{\mathcal{T}_h(\Omega_{\ast}(x_l))}+ C\sum_{j=0}^I(hd_j^{-1})^{-s}d_j^{3(\frac{1}{q}-\frac{1}{2})}\Big((hd_j^{-1})\mathcal{K}_{\mathcal{T}_h(\Omega_j(x_l))}\label{rG01-HDG-L1-proof:4}\\
&+h|\Gamma_l|_{H^2(\Omega_j^2(x_l))}+d_j^{-1}\|\Gamma_l-\Gamma_{l,h}\|_{L^2(\Omega_j^1(x_l))}\Big)\nonumber\\
\leq &C\sum_{j=0}^I(hd_j^{-1})^{-s}d_j^{3(\frac{1}{q}-\frac{1}{2})}\Big((hd_j^{-1})\mathcal{K}_{\mathcal{T}_h(\Omega_j(x_l))}
+d_j^{-1}\|\Gamma_l-\Gamma_{l,h}\|_{L^2(\Omega_j^1(x_l))}\Big)\nonumber\\
&+C\Big(1+\sum_{j=0}^I(hd_j^{-1})^{1+\frac{3}{p}-s}\Big)\begin{cases}
h^{-\frac{3}{p}},\quad &l=1,2,\\
h^{1-\frac{3}{p}},\quad&l=0,\ast.
\end{cases}\nonumber
\end{align}

To estimate $\|\Gamma_l-\Gamma_{l,h}\|_{L^2(\Omega^1_j(x_l))}$, let $(W,\boldsymbol{\mathcal{A}})$ be the solution of the following problem:
\begin{align*}
\boldsymbol{\mathcal{A}}+\nabla W=0\quad {\rm in}~\Omega,\quad \nabla\cdot\boldsymbol{\mathcal{A}}=\psi\quad {\rm in}~\Omega,\quad W=0\quad {\rm on}~\partial\Omega,
\end{align*}
for $\psi\in\mathcal{C}_0^{\infty}(\Omega_j^1(x_l))$ with $\|\psi\|_{L^2(\Omega_j^1(x_l))}=1$. Then, similar to that below \cite[(4.49)]{lq2026}, using
Lemma \ref{dfp} we have
\begin{align*}
-(\psi,\Gamma_l-\Gamma_{l,h})=&(\boldsymbol{\mathcal{A}}-\Pi_W\boldsymbol{\mathcal{A}},\boldsymbol{\mathcal{R}}_l-\boldsymbol{\mathcal{R}}_{l,h})_{\mathcal{T}_h}+
(\nabla(W-\Pi_VW),\boldsymbol{\mathcal{R}}_l-\boldsymbol{\mathcal{R}}_{l,h})_{\mathcal{T}_h}\\
&-\langle W-\Pi_VW,(\boldsymbol{\mathcal{R}}_l-\boldsymbol{\mathcal{R}}_{l,h})\cdot{\bm n}\rangle_{\partial\mathcal{T}_h}+(\boldsymbol{\mathcal{A}}
-\Pi_W\boldsymbol{\mathcal{A}},\nabla(\Gamma_l-\Gamma_{l,h}))_{\mathcal{T}_h}\\
&+\langle(\boldsymbol{\mathcal{A}}-\Pi_W\boldsymbol{\mathcal{A}})\cdot{\bm n},\Gamma_{l,h}-\widehat{\Gamma}_{l,h}\rangle_{\partial\mathcal{T}_h}
+\langle\tau(\Pi_{\widehat{V}}W-\mathcal{L}\Pi_VW),\mathcal{L}\Gamma_{l,h}-\widehat{\Gamma}_{l,h}\rangle_{\partial\mathcal{T}_h}.
\end{align*}
On the subdomain $\Omega_j^3(x_l)$, Lemma \ref{inv}, Lemma \ref{tra} with $\beta=\frac{1}{2}$, (\ref{rG01-HDG-L1-proof:2}), and interpolation error estimates yield
\begin{align*}
-(\psi,\Gamma_l-\Gamma_{l,h})|_{\Omega_j^3(x_l)}\leq& Ch\Big(\|\boldsymbol{\mathcal{R}}_l-\Pi_W\boldsymbol{\mathcal{R}}_l\|_{L^2(\mathcal{T}_h(\Omega_j^3(x_l)))}
+h\|\nabla(\boldsymbol{\mathcal{R}}_l-\Pi_W\boldsymbol{\mathcal{R}}_l)\|_{L^2_h(\mathcal{T}_h(\Omega_j^3(x_l)))}\\
&+\|\nabla(\Gamma_l-\Pi_V\Gamma_l)\|_{L^2_h(\mathcal{T}_h(\Omega_j^3(x_l)))}\\
&+\|\nabla(\Gamma_l-\Pi_k\Gamma_l)\|_{L^2_h(\mathcal{T}_h(\Omega_j^3(x_l)))}
+\mathcal{K}_{\mathcal{T}_h(\Omega_j^3(x_l))}\Big).
\end{align*}
On the subdomain $\Omega\backslash\Omega^3_j(x_l)$, \cite[Lemma 3.8]{lc2026} and the interpolation error estimates presented in \cite[Lemma 4.1]{glrs2009}
yield
\begin{align*}
-(\psi,\Gamma_l-\Gamma_{l,h})|_{\Omega\backslash\Omega_j^3(x_l)}\leq& Ch^{\gamma}d_j^{-\gamma-\frac{1}{2}}\Big(\|\boldsymbol{\mathcal{R}}_l-\boldsymbol{\mathcal{R}}_{l,h}\|_{L^1(\Omega)}+\sum_{K\in\mathcal{T}_h}
h_K\|(\boldsymbol{\mathcal{R}}_l-\boldsymbol{\mathcal{R}}_{l,h})\cdot{\bm n}\|_{L^1(\partial K)}\\
&+\|\nabla(\Gamma_l-\Gamma_{l,h})\|_{L^1_h(\Omega)}+\sum_{K\in\mathcal{T}_h}\|\Gamma_{l,h}-\widehat{\Gamma}_{l,h}\|_{L^1(\partial K)}\Big).
\end{align*}
Thus, using the duality argument, Lemma \ref{leee}, Lemma \ref{rG-lee}, and interpolation error estimates, we have
\begin{align}
&\sum_{j=0}^I(hd_j^{-1})^{-s}d_j^{3(\frac{1}{q}-\frac{1}{2})}d_j^{-1}\|\Gamma_l-\Gamma_{l,h}\|_{L^2(\Omega_j^1(x_l))}\nonumber\\
\leq& C\sum_{j=0}^I(hd_j^{-1})^{-s}d_j^{3(\frac{1}{q}-\frac{1}{2})}h^2d_j^{-1}|\Gamma_l|_{H^2(\Omega_j^4(x_l))}
+C(Mh)^{3(\frac{1}{q}-\frac{1}{2})}\mathcal{K}_{\mathcal{T}_h(\Omega_{\ast}(x_l))}\nonumber\\
&+C\sum_{j=0}^I(hd_j^{-1})^{-s}d_j^{3(\frac{1}{q}-\frac{1}{2})}hd_j^{-1}\mathcal{K}_{\mathcal{T}_h(\Omega_j(x_l))}
+Ch^{-\frac{3}{p}}\sum_{j=0}^I(hd_j^{-1})^{\gamma+\frac{3}{p}-s}\Big(\|\boldsymbol{\mathcal{R}}_l-\boldsymbol{\mathcal{R}}_{l,h}\|_{L^1(\Omega)}\nonumber\\
&+\|\nabla(\Gamma_l-\Gamma_{l,h})\|_{L^1_h(\Omega)}+\sum_{K\in\mathcal{T}_h}
\left(h_K\|(\boldsymbol{\mathcal{R}}_l-\boldsymbol{\mathcal{R}}_{l,h})\cdot{\bm n}\|_{L^1(\partial K)}+\|\Gamma_{l,h}-\widehat{\Gamma}_{l,h}\|_{L^1(\partial K)}\right)\Big)\nonumber\\
\leq&C\sum_{j=0}^I(hd_j^{-1})^{-s}d_j^{3(\frac{1}{q}-\frac{1}{2})}hd_j^{-1}\mathcal{K}_{\mathcal{T}_h(\Omega_j(x_l))}
+Ch^{-\frac{3}{p}}\Big(\|\boldsymbol{\mathcal{R}}_l-\boldsymbol{\mathcal{R}}_{l,h}\|_{L^1(\Omega)}+\|\nabla(\Gamma_l-\Gamma_{l,h})\|_{L^1_h(\Omega)}\nonumber\\
&+\sum_{K\in\mathcal{T}_h}\left(h_K\|(\boldsymbol{\mathcal{R}}_l-\boldsymbol{\mathcal{R}}_{l,h})\cdot{\bm n}\|_{L^1(\partial K)}
+\|\Gamma_{l,h}-\widehat{\Gamma}_{l,h}\|_{L^1(\partial K)}\right)\Big)\nonumber\\
&+C\left(1+\sum_{j=0}^I(hd_j^{-1})^{2+\frac{3}{p}-s}\right)\begin{cases}
h^{-\frac{3}{p}},\quad & l=1,2,\\
h^{1-\frac{3}{p}},\quad& l=0,\ast,
\end{cases}\nonumber
\end{align}
which, together with (\ref{rG01-HDG-L1-proof:4}), leads to
\begin{align}
J_4\leq& Ch^{-\frac{3}{p}}\Big(\|\boldsymbol{\mathcal{R}}_l-\boldsymbol{\mathcal{R}}_{l,h}\|_{L^1(\Omega)}
+\|\nabla(\Gamma_l-\Gamma_{l,h})\|_{L^1_h(\Omega)}+\sum_{K\in\mathcal{T}_h}\Big(h_K\|(\boldsymbol{\mathcal{R}}_l-\boldsymbol{\mathcal{R}}_{l,h})\cdot{\bm n}\|_{L^1(\partial K)}\nonumber\\
&+\|\Gamma_{l,h}-\widehat{\Gamma}_{l,h}\|_{L^1(\partial K)}\Big)\Big)+C\begin{cases}
h^{-\frac{3}{p}},\quad & l=1,2,\\
h^{1-\frac{3}{p}},\quad& l=0,\ast,
\end{cases}\label{rG01-HDG-L1-proof:5}
\end{align}
by setting $M$ to be large enough.

Similar to the above proof that uses the dyadic decomposition of the domain $\Omega$ and the local energy estimates provided in Lemma \ref{rG-lee}, we also can prove
\begin{align*}
&\|\boldsymbol{\mathcal{R}}_l-\boldsymbol{\mathcal{R}}_{l,h}\|_{L^1(\Omega)}
+\|\nabla(\Gamma_l-\Gamma_{l,h})\|_{L^1_h(\Omega)}+\sum_{K\in\mathcal{T}_h}\Big(h_K\|(\boldsymbol{\mathcal{R}}_l-\boldsymbol{\mathcal{R}}_{l,h})\cdot{\bm n}\|_{L^1(\partial K)}\\
&+\|\Gamma_{l,h}-\widehat{\Gamma}_{l,h}\|_{L^1(\partial K)}\Big)\leq \begin{cases}
C,\quad & l=1,2,\\
Ch,\quad& l=0,\ast,
\end{cases}
\end{align*}
which, together with (\ref{rG01-HDG-L1-proof:1}), (\ref{rG01-HDG-L1-proof:3}), and (\ref{rG01-HDG-L1-proof:5}), concludes the proof. Note that
for $l\in\{0,\ast\}$ and $l\in\{1,2\}$, the above $L^1$ error estimates have been established in \cite[Lemma 4.8 and Lemma A.2]{lq2026} respectively.
\end{proof}

Based on (\ref{rG-HDG1:1}), (\ref{rG-HDG2:1}), (\ref{rG0-HDG:1}), (\ref{rG1-HDG:1}), and Lemma \ref{rG01-HDG-L1}, for $2\leq p<\frac{1}{\epsilon}$ and $\frac{3}{p}<s<\gamma+\frac{3}{p}$ we obtain
\begin{align}
&|\nabla(\Pi_Vu-u_h)(z)|+|(\Pi_W{\bm q}-{\bm q}_h)(z)|\nonumber\\
\leq& Ch^{-\frac{3}{p}}\Big(\|\sigma_z^s(\Pi_W{\bm q}-{\bm q})\|_{L^p(\Omega)}+\|\sigma_z^s\nabla(\Pi_Vu-u)\|_{L_h^p(\Omega)}\label{rG-HDG12:1}\\
&+\Big(\sum_{K\in\mathcal{T}_h}h_K\|\sigma_z^s(\Pi_W{\bm q}-{\bm q})\cdot{\bm n}\|^p_{L^p(\partial K)}+h_K^{1-p}\|\sigma_z^s(\Pi_Vu-u)\|_{L^p(\partial K)}^p\Big)^{1/p}\Big),\nonumber
\end{align}
if $z\in K_z$ with $K_z\in\mathcal{T}_h$, and
\begin{align}
|[\Pi_Vu-u_h](z)|\leq& Ch^{1-\frac{3}{p}}\Big(\|\sigma_{z}^s(\Pi_W{\bm q}-{\bm q})\|_{L^p(\Omega)}+\|\sigma_{z}^s\nabla(\Pi_Vu-u)\|_{L_h^p(\Omega)}\nonumber\\
&+h^{-1}\|\sigma_{z}^s(\Pi_Vu-u)\|_{L^p(\Omega)}
+\Big(\sum_{K\in\mathcal{T}_h}h_K\|\sigma_{z}^s(\Pi_W{\bm q}-{\bm q})\cdot{\bm n}\|^p_{L^p(\partial K)}\label{rG-HDG0:1}\\
&+h_K^{1-p}\|\sigma_{z}^s(\Pi_Vu-u)\|_{L^p(\partial K)}^p\Big)^{1/p}\Big),\nonumber
\end{align}
if $z\in F$ with $F\in\mathcal{E}_h$.

Using $|\nabla\sigma_z^s|\leq Ch^{-1}\sigma_z^s$ and Lemma \ref{tra} with $\beta=\epsilon$ and $\beta=1-\frac{1}{p}$,  we have
\begin{align*}
&\sum_{K\in\mathcal{T}_h}\Big(h_K\|\sigma_z^s(\Pi_W{\bm q}-{\bm q})\cdot{\bm n}\|^p_{L^p(\partial K)}+h_K^{1-p}\|\sigma_z^s(\Pi_Vu-u)\|_{L^p(\partial K)}^p\Big) \\
&+\|\sigma_z^s(\Pi_W{\bm q}-{\bm q})\|^p_{L^p(\Omega)}+\|\sigma_z^s\nabla(\Pi_Vu-u)\|^p_{L_h^p(\Omega)}+h^{-p}\|\sigma_{z}^s(\Pi_Vu-u)\|^p_{L^p(\Omega)}\\
\leq&C\Big(\|\sigma_z^s(\Pi_W{\bm q}-{\bm q})\|^p_{L^p(\Omega)}+h^{1+p\epsilon}|\sigma_z^s(\Pi_W{\bm q}-{\bm q})|_{W_h^{\frac{1}{p}+\epsilon,p}(\Omega)}^p\\
&+h^{-p}\|\sigma_z^s(\Pi_Vu-u)\|_{L^p(\Omega)}^p+\|\sigma_z^s\nabla(\Pi_Vu-u)\|^p_{L_h^p(\Omega)}\Big),
\end{align*}
which, together with (\ref{rG-HDG12:1}), (\ref{rG-HDG0:1}), and Lemma \ref{fubini}, leads to
\begin{align*}
&\|\nabla(\Pi_Vu-u_h)\|_{L^p_h(\Omega)}^p+\|\Pi_W{\bm q}-{\bm q}_h\|_{L^p(\Omega)}^p+\sum_{F\in\mathcal{E}_h}h_F^{1-p}\|[\Pi_Vu-u_h]\|_{L^p(F)}^p\\
\leq& Ch^{-3}\int_{\Omega}\Big(\|\sigma_z^s(\Pi_W{\bm q}-{\bm q})\|^p_{L^p(\Omega)}+h^{1+p\epsilon}|\sigma_z^s(\Pi_W{\bm q}-{\bm q})|_{W_h^{\frac{1}{p}+\epsilon,p}(\Omega)}^p\\
&+h^{-p}\|\sigma_z^s(\Pi_Vu-u)\|_{L^p(\Omega)}^p+\|\sigma_z^s\nabla(\Pi_Vu-u)\|^p_{L_h^p(\Omega)}\Big)dz\\
&+Ch^{-2}\sum_{F\in\mathcal{E}_h}\int_F\Big(\|\sigma_z^s(\Pi_W{\bm q}-{\bm q})\|^p_{L^p(\Omega)}+h^{1+p\epsilon}|\sigma_z^s(\Pi_W{\bm q}-{\bm q})|_{W_h^{\frac{1}{p}+\epsilon,p}(\Omega)}^p\\
&+h^{-p}\|\sigma_z^s(\Pi_Vu-u)\|_{L^p(\Omega)}^p+\|\sigma_z^s\nabla(\Pi_Vu-u)\|^p_{L_h^p(\Omega)}\Big)dz\\
\leq& C\Big(\|\Pi_W{\bm q}-{\bm q}\|^p_{L^p(\Omega)}+h^{1+p\epsilon}|\Pi_W{\bm q}-{\bm q}|_{W_h^{\frac{1}{p}+\epsilon,p}(\Omega)}^p\\
&+h^{-p}\|\Pi_Vu-u\|_{L^p(\Omega)}^p+\|\nabla(\Pi_Vu-u)\|^p_{L_h^p(\Omega)}\Big).
\end{align*}
Consequently, the above estimate, Lemma \ref{tra} with $\beta=1-\frac{1}{p}$, and the triangle inequality derive (\ref{W1p-HDG-pre:1}) for the case
$2\leq p<\frac{1}{\epsilon}$.

To prove (\ref{W1p-HDG-pre:1}) for the case $\frac{1}{1-\epsilon}<p<2$, let $(\boldsymbol{\psi}_h,\phi_h,\widehat{\phi}_h)\in {\bm W}_h\times V_h\times \widehat{V}_h$ be the solution of the following problem:
\begin{align*}
&\mathcal{B}(\boldsymbol{\psi}_h,\phi_h,\widehat{\phi}_h;{\bm w}_h,v_h,\widehat{v}_h)\\
=&(|\Pi_W{\bm q}-{\bm q}_h|^{p-2}(\Pi_W{\bm q}-{\bm q}_h),{\bm w}_h)_{\mathcal{T}_h}+(|\nabla(\Pi_Vu-u_h)|^{p-2}\nabla(\Pi_Vu-u_h),\nabla v_h)_{\mathcal{T}_h}\\
&+\sum_{F\in\mathcal{E}_h}h_F^{1-p}\langle|[\Pi_Vu-u_h]|^{p-2}[\Pi_Vu-u_h],[v_h]\rangle_{F},\quad \forall ({\bm w}_h,v_h,\widehat{v}_h)\in{\bm W}_h\times V_h\times \widehat{V}_h.
\end{align*}
Then, Lemma \ref{dfp}, the inf-sup condition (\ref{inf-sup-HDG0:1}), and the H\"{o}lder's inequality yield
\begin{align*}
\|\Pi_W{\bm q}-{\bm q}_h\|_{L^p(\Omega)}^p+|||\Pi_Vu-u_h|||_{W_h^{1,p}(\Omega)}^p=&\mathcal{B}(\boldsymbol{\psi}_h,\phi_h,\widehat{\phi}_h;\Pi_W{\bm q}-{\bm q}_h,\Pi_Vu-u_h,\Pi_{\widehat{V}}u-\widehat{u}_h)\\
=&\mathcal{B}(\boldsymbol{\psi}_h,\phi_h,\widehat{\phi}_h;\Pi_W{\bm q}-{\bm q},\Pi_Vu-u,\Pi_{\widehat{V}}u-u),
\end{align*}
and
\begin{align*}
|||(\boldsymbol{\psi}_h,\phi_h,\widehat{\phi}_h)|||_{W_{h^{\prime}}^{1,q}(\Omega)}\leq C\Big(\|\Pi_W{\bm q}-{\bm q}_h\|_{L^p(\Omega)}^{p-1}+|||\Pi_Vu-u_h|||_{W_h^{1,p}(\Omega)}^{p-1}\Big),
\end{align*}
where the mesh-dependent norms $|||\cdot|||_{W_h^{1,p}(\Omega)}$ and $|||(\cdot,\cdot,\cdot)|||_{W_{h^{\prime}}^{1,q}(\Omega)}$ are defined in Theorem \ref{W1p-IPDG} and Corollary \ref{W1p-HDG-c1}.
Thus, using Lemma \ref{inv}, Lemma \ref{tra} with $\beta=\epsilon$ and $\beta=1-\frac{1}{p}$, and the definition of $\mathcal{B}$, we have
\begin{align*}
&\|\Pi_W{\bm q}-{\bm q}_h\|_{L^p(\Omega)}+|||\Pi_Vu-u_h|||_{W_h^{1,p}(\Omega)}\\
\leq& C\frac{\mathcal{B}(\boldsymbol{\psi}_h,\phi_h,\widehat{\phi}_h;\Pi_W{\bm q}-{\bm q},\Pi_Vu-u,\Pi_{\widehat{V}}u-u)}{|||(\boldsymbol{\psi}_h,\phi_h,\widehat{\phi}_h)|||_{W_{h^{\prime}}^{1,q}(\Omega)}}\\
\leq&C\Big(\|\nabla(u-\Pi_Vu)\|_{L^p_h(\Omega)}+\|{\bm q}-\Pi_W{\bm q}\|_{L^p(\Omega)}+h^{\frac{1}{p}+\epsilon}|{\bm q}-\Pi_W{\bm q}|_{W_h^{\frac{1}{p}+\epsilon,p}(\Omega)}\Big),
\end{align*}
which, together with Lemma \ref{tra} for $\beta=1-\frac{1}{p}$ and the triangle inequality, obtains the result (\ref{W1p-HDG-pre:1}) for the case $\frac{1}{1-\epsilon}<p<2$. We conclude the proof.

\section{Numerical experiments}\label{sec6}
In this section, we verify the theoretical results obtained in this paper through a simple example, in which the exact solution and the domain are chosen
as
\begin{align*}
u=\sin(2\pi x_1)\sin(2\pi x_2)\sin(2\pi x_3), \quad \Omega=(0,1)^3.
\end{align*}
For ease of presentation, we adopt the following notations in this section:
\begin{align*}
E^{\rm HDG}_{1,p}=&\|{\bm q}-{\bm q}_h\|_{L^p(\Omega)}+|||u-u_h|||_{W_h^{1,p}(\Omega)},\\
E^{\rm HDG}_{2,p}=&\|{\bm q}-{\bm q}_h\|_{L^p(\Omega)}+\|\nabla(u-u_h)\|_{L_h^p(\Omega)}+\Big(\sum_{K\in\mathcal{T}_h}\tau^{p-1}\|\mathcal{L}u_h-\widehat{u}_h\|_{L^p(\partial K)}^p\Big)^{1/p},\\
E^{\rm HDG}_{3,p}=&|||({\bm q}-{\bm q}_h,u-u_h,u-\widehat{u}_h)|||_{W_{h^{\prime}}^{1,p}(\Omega)},\quad E^{\rm DG}_p=|||u-u_h^{\rm DG}|||_{W_h^{1,p}(\Omega)}.
\end{align*}

In Table 1, we present the convergence histories and convergence rates of the error $E_p^{\rm DG}$ for the IPDG method with $p\in\{\frac{6}{5},4,8\}$ and $k\in\{1,2,3\}$. In Table 2 and 3, we present the convergence histories and convergence rates of the errors $E_{1,p}^{\rm HDG}$, $E_{2,p}^{\rm HDG}$
and $E_{3,p}^{\rm HDG}$ for the hybrid mixed DG and HDG-1 methods with $p\in\{\frac{6}{5},4,8\}$ and $k=2$. From these results, we observe that the convergence
rates are numerically optimal.

In Table 4, we also provide the convergence histories and convergence rates of the errors $E_{1,p}^{\rm HDG}$, $E_{2,p}^{\rm HDG}$ and $E_{3,p}^{\rm HDG}$ for the HDG-2 method with $p\in\{\frac{6}{5},4,8\}$ and $k=2$. The results show that the convergence rates of $E_{1,p}^{\rm HDG}$ and $E_{2,p}^{\rm HDG}$ are numerically optimal. For the error $E_{3,p}^{\rm HDG}$, the convergence rate is only suboptimal. This is due to the suboptimal
convergence rate of $\big(\sum_{K\in\mathcal{T}_h}h_K^{1-p}\|u_h-\widehat{u}_h\|_{L^p(\partial K)}^p\big)^{1/p}$ as shown in Table 5, where a polynomial $u_h$ of degree $k+1$ is used to approximate the polynomial $\widehat{u}_h$ of degree $k$ on each element boundaries.

\begin{center}
\begin{tabular}{c|c|c|c|c|c|c|c}
  \hline
   p & h & $E_p^{\rm DG}~(k=1)$ & Order & $E_{p}^{\rm DG}~(k=2)$ & Order & $E_{p}^{\rm DG}~(k=3)$ & Order \\
  \hline
                & $\frac{1}{32}$ &5.1011e-1& &2.2098e-2 & &6.7094e-4&  \\
   $\frac{6}{5}$& $\frac{1}{64}$ &2.5579e-1&1.00&5.5413e-3 &2.00&8.3621e-5&3.00\\
                & $\frac{1}{128}$&1.2800e-1&1.00&1.3868e-3 &2.00&1.0442e-5&3.00\\
  \hline
     & $\frac{1}{32}$ &4.5451e-1& &2.6024e-2 & &1.0734e-3&  \\
   4 & $\frac{1}{64}$ &2.2841e-1&1.00&6.5517e-3 &1.99&1.3435e-4&3.00\\
     & $\frac{1}{128}$&1.1437e-1&1.00&1.6413e-3 &2.00&1.6794e-5&3.00\\
 \hline
     & $\frac{1}{32}$ &5.5749e-1& &4.0735e-2 & &2.1044e-3&  \\
   8 & $\frac{1}{64}$ &2.8021e-1&1.00&1.0253e-2 &1.99&2.6349e-4&3.00\\
     & $\frac{1}{128}$&1.4031e-1&1.00&2.5685e-3 &2.00&3.2939e-5&3.00\\
 \hline
\end{tabular}
\vskip 1mm
Table 1. The convergence histories and convergence rates for the IPDG method.
\end{center}
\vskip 5mm

\begin{center}
\begin{tabular}{c|c|c|c|c|c|c|c}
  \hline
   p & h & $E_{1,p}^{\rm HDG}~(k=2)$ & Order & $E_{2,p}^{\rm HDG}~(k=2)$ & Order & $E_{3,p}^{\rm HDG}~(k=2)$ & Order\\
  \hline
                & $\frac{1}{32}$ &2.0384e-2 & &1.5942e-2&  &2.1503e-2&\\
   $\frac{6}{5}$& $\frac{1}{64}$ &5.0688e-3 &2.01&3.9536e-3&2.01&5.3685e-3&2.00\\
                & $\frac{1}{128}$&1.2632e-3 &2.01&9.8406e-4&2.01&1.3410e-3&2.00\\
  \hline
     & $\frac{1}{32}$ &2.3007e-2 & &2.2997e-2&  &2.2626e-2&\\
   4 & $\frac{1}{64}$ &5.7181e-3 &2.01&5.7157e-3&2.01&5.6695e-3&2.00\\
     & $\frac{1}{128}$&1.4245e-3 &2.01&1.4239e-3&2.01&1.4182e-3&2.00\\
 \hline
     & $\frac{1}{32}$ &3.7016e-2 & &3.7016e-2&  &3.6414e-2&\\
   8 & $\frac{1}{64}$ &9.2028e-3 &2.01&9.2028e-3&2.01&9.1273e-3&2.00\\
     & $\frac{1}{128}$&2.2928e-3 &2.01&2.2928e-3&2.01&2.2833e-3&2.00\\
 \hline
\end{tabular}
\vskip 1mm
Table 2. The convergence histories and convergence rates for the hybrid mixed DG method.
\end{center}
\vskip 5mm

\begin{center}
\begin{tabular}{c|c|c|c|c|c|c|c}
  \hline
   p & h & $E_{1,p}^{\rm HDG}~(k=2)$ & Order & $E_{2,p}^{\rm HDG}~(k=2)$ & Order & $E_{3,p}^{\rm HDG}~(k=2)$ & Order\\
  \hline
                & $\frac{1}{32}$ &2.5067e-2 & &2.2025e-2&  &2.8156e-2&\\
   $\frac{6}{5}$& $\frac{1}{64}$ &5.7314e-3 &2.01&4.8004e-3&2.20&6.2820e-3&2.16\\
                & $\frac{1}{128}$&1.3903e-3 &2.01&1.1291e-3&2.09&1.5091e-3&2.06\\
  \hline
     & $\frac{1}{32}$ &2.6141e-2 & &2.7792e-2&  &2.5187e-2&\\
   4 & $\frac{1}{64}$ &6.2318e-3 &2.07&6.4373e-3&2.11&6.1114e-3&2.04\\
     & $\frac{1}{128}$&1.5314e-3 &2.03&1.5568e-3&2.05&1.5161e-3&2.01\\
 \hline
     & $\frac{1}{32}$ &4.1540e-2 & &4.3668e-2&  &3.9761e-2&\\
   8 & $\frac{1}{64}$ &9.9508e-3 &2.06&1.0217e-2&2.10&9.7281e-3&2.03\\
     & $\frac{1}{128}$&2.4464e-3 &2.02&2.4798e-3&2.04&2.4186e-3&2.01\\
 \hline
\end{tabular}
\vskip 1mm
Table 3. The convergence histories and convergence rates for the HDG-1 method.
\end{center}
\vskip 5mm

\begin{center}
\begin{tabular}{c|c|c|c|c|c|c|c}
  \hline
   p & h & $E_{1,p}^{\rm HDG}~(k=2)$ & Order & $E_{2,p}^{\rm HDG}~(k=2)$ & Order & $E_{3,p}^{\rm HDG}~(k=2)$ & Order\\
  \hline
                & $\frac{1}{32}$ &4.7093e-3 & &6.1122e-3&  &1.2610e-2&\\
   $\frac{6}{5}$& $\frac{1}{64}$ &5.8814e-4 &3.00&7.6476e-4&3.00&2.6957e-3&2.23\\
                & $\frac{1}{128}$&7.3489e-5 &3.00&9.5622e-5&3.00&6.2687e-4&2.10\\
  \hline
     & $\frac{1}{32}$ &4.7579e-3 & &5.4480e-3&  &4.8210e-3&\\
   4 & $\frac{1}{64}$ &5.9529e-4 &3.00&6.8156e-4&3.00&1.0084e-3&2.26\\
     & $\frac{1}{128}$&7.4416e-5 &3.00&8.5215e-5&3.00&2.4750e-4&2.02\\
 \hline
     & $\frac{1}{32}$ &6.8739e-3 & &7.5876e-3&  &6.1003e-3&\\
   8 & $\frac{1}{64}$ &8.5966e-4 &3.00&9.4894e-4&3.00&1.3610e-3&2.16\\
     & $\frac{1}{128}$&1.0747e-4 &3.00&1.1863e-4&3.00&3.3956e-4&2.00\\
 \hline
\end{tabular}
\vskip 1mm
Table 4. The convergence histories and convergence rates for the HDG-2 method.
\end{center}
\vskip 5mm

\begin{center}
\begin{tabular}{c|c|c|c|c|c|c|c}
  \hline
   p & h & $\|{\bm q}-{\bm q}_h\|_{L^p(\Omega)}$ & Order & $\|\nabla(u-u_h)\|_{L_h^p(\Omega)}$ & Order & $\Big(\sum_{K\in\mathcal{T}_h}h_K^{1-p}\|u_h-\widehat{u}_h\|_{L^p(\partial K)}^p\Big)^{1/p}$ & Order\\
  \hline
                & $\frac{1}{32}$ &4.9612e-4 & &3.3457e-3&  &1.0207e-2&\\
   $\frac{6}{5}$& $\frac{1}{64}$ &6.2154e-5 &3.00&4.1848e-4&3.00&2.4283e-3&2.07\\
                & $\frac{1}{128}$&7.7751e-6 &3.00&5.2321e-5&3.00&5.9751e-4&2.02\\
  \hline
     & $\frac{1}{32}$ &6.4962e-4 & &4.1076e-3&  &3.9973e-3&\\
   4 & $\frac{1}{64}$ &8.1300e-5 &3.00&5.1382e-4&3.00&9.9094e-4&2.01\\
     & $\frac{1}{128}$&1.0163e-5 &3.00&6.4243e-5&3.00&2.4722e-4&2.00\\
 \hline
     & $\frac{1}{32}$ &1.1713e-3 & &5.7026e-3&  &5.4684e-3&\\
   8 & $\frac{1}{64}$ &1.4653e-4 &3.00&7.1313e-4&3.00&1.3600e-3&2.01\\
     & $\frac{1}{128}$&1.8315e-5 &3.00&8.9156e-5&3.00&3.3956e-4&2.00\\
 \hline
\end{tabular}
\vskip 1mm
Table 5. The convergence histories and convergence rates for the HDG-2 method and $k=2$.
\end{center}

\section{Conclusions}\label{sec7}
In this paper, we have established optimal discrete $W^{1,p}$ error estimates and discrete $W^{1,q}$-$W^{1,p}$ inf-sup stability for a symmetric IPDG method, a hybrid mixed DG method, and two representative HDG methods for the Poisson problem on convex polyhedral domains. For any fixed $\epsilon\in(0,\frac{1}{2})$, these results hold for $\frac{1}{1-\epsilon}<p<\frac{1}{\epsilon}$ and are free of logarithmic factors.
The key to our analysis is the use of interface Green's functions developed in \cite{lq2026}, combined with Fubini-type weighted estimates established in this paper. A numerical example is also presented to illustrate the theoretical findings.
These results provide a basis for further investigations of DG methods on convex polyhedral domains. In particular, the logarithm-free discrete $W^{1,p}$ error estimates and inf-sup stability may facilitate the analysis of $W^{2,p}$ error estimates for DG approximations of elliptic equations in non-divergence form and the establishment of weak discrete maximum principles for DG methods.

\smallskip
\smallskip

\appendix

\section{}\label{apendixA}
In this section, we are devoted to proving the following two important lemmas.
\begin{lemma}\label{fubini}
Let $1\leq p<\infty$, $\frac{3}{p}<s$ and $l\in(0,1)$. Then, for $v\in L^p(\Omega)$ we have
\begin{align}
\sum_{F\in\mathcal{E}_h}\int_F\|\sigma_z^sv\|_{L^p(\Omega)}^p dz\leq Ch^2\|v\|_{L^p(\Omega)}^p,\quad \int_{\Omega}\|\sigma_z^sv\|_{L^p(\Omega)}^p dz\leq Ch^3\|v\|_{L^p(\Omega)}^p,\label{fubini:1}
\end{align}
where $C$ is the positive constant depending on $s$ and $p$. Furthermore, for $v\in W_h^{1+l,p}(\Omega)$ we have
\begin{align}
\sum_{F\in\mathcal{E}_h}\int_F|\sigma_z^s\nabla v|_{W_h^{l,p}(\Omega)}^p dz\leq& Ch^2|\nabla v|_{W_h^{l,p}(\Omega)}^p+Ch^{2-pl}\|\nabla v\|^p_{L^p_h(\Omega)},\label{fubini:2}
\end{align}
and
\begin{align}
\int_{\Omega}|\sigma_z^s\nabla v|_{W_h^{l,p}(\Omega)}^p dz\leq& Ch^3|\nabla v|^p_{W_h^{l,p}(\Omega)}+Ch^{3-pl}\|\nabla v\|_{L_h^p(\Omega)}^p,\label{fubini:3}
\end{align}
where $C$ is the positive constant depending on $s$, $p$ and $l$. Here, $\sigma_z$ is a weight function defined in (\ref{swf}).
\end{lemma}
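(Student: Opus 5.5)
The plan is to use Fubini/Tonelli to exchange the $z$-integration with the $x$-integration. The whole proof then reduces to two elementary bounds on integrals of the weight $\sigma_z^{sp}$. The first is the volume bound
\begin{align*}
\int_{\Omega}\sigma_z(x)^{sp}\,dz\leq Ch^3 .
\end{align*}
Using polar coordinates around $x$, it reads $\int_0^{\infty}h^{sp}(r+h)^{-sp}r^2\,dr=h^3\int_0^\infty (t+1)^{-sp}t^2\,dt$. This is finite since $sp>3$. The second is the face bound
\begin{align*}
\sum_{F\in\mathcal{E}_h}\int_F\sigma_z(x)^{sp}\,dz\leq Ch^2 .
\end{align*}
For this one we use the dyadic shells $\Omega_j(x)$ and $\Omega_*(x)$. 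By quasi-uniformity, the total face area $\sum_F|F\cap B(x,d)|$ is bounded by $Cd^3/h$ for $d\geq h$. Hence the sum is at most
\begin{align*}
Ch^{-1}\sum_j (hd_j^{-1})^{sp}d_j^3+Ch^2\leq Ch^2 ,
\end{align*}
where the geometric series converges because $sp>3$.

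With these bounds, (\ref{fubini:1}) is immediate. We write $\|\sigma_z^sv\|^p_{L^p}=\int_\Omega\sigma_z(x)^{sp}|v(x)|^p\,dx$ and swap the order of integration.

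For (\ref{fubini:2}) and (\ref{fubini:3}), we fix $K$ and expand the Gagliardo seminorm of $\sigma_z^s\nabla v$ on $K$. We split
\begin{align*}
\sigma_z^s(x)\nabla v(x)-\sigma_z^s(y)\nabla v(y)=\sigma_z^s(x)\big(\nabla v(x)-\nabla v(y)\big)+\big(\sigma_z^s(x)-\sigma_z^s(y)\big)\nabla v(y).
\end{align*}
The first term produces $\int_K\int_K\sigma_z(x)^{sp}|\nabla v(x)-\nabla v(y)|^p|x-y|^{-3-pl}\,dx\,dy$. After integrating in $z$, the two weight bounds give $Ch^3$ (resp. $Ch^2$) times $|\nabla v|^p_{W^{l,p}(K)}$.

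For the second term we use $|\nabla\sigma_z^s|\leq Ch^{-1}\sigma_z^s$ and the mean value theorem. Since $|x-y|\leq h_K$ on $K$, the ratio $\sigma_z(\xi)/\sigma_z(y)$ is uniformly bounded for $\xi$ on the segment $[x,y]$. This gives
\begin{align*}
|\sigma_z^s(x)-\sigma_z^s(y)|\leq Ch^{-1}|x-y|\,\sigma_z^s(y).
\end{align*}
The second term is therefore bounded by $Ch^{-p}\int_K\sigma_z(y)^{sp}|\nabla v(y)|^p\int_K|x-y|^{p-3-pl}\,dx\,dy$. The inner integral is at most $Ch^{p-pl}$ because $l<1$. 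The result is $Ch^{-pl}\int_K\sigma_z(y)^{sp}|\nabla v(y)|^p\,dy$. Integrating in $z$ via Fubini gives $Ch^{3-pl}\|\nabla v\|^p_{L^p(K)}$ (resp. $Ch^{2-pl}$). Summing over $K$ finishes the proof.

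The main obstacle is the face-sum bound. The $z$-integration runs over the two-dimensional skeleton, so it needs the counting argument for face area in balls, which is where quasi-uniformity and $sp>3$ enter. The near-field region $|z-x|\lesssim h$ also has to be handled separately, using the fact that only $O(1)$ faces meet $B(x,Mh)$.
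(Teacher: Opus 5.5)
Your proposal is correct and takes essentially the same route as the paper. Fubini--Tonelli reduces everything to the weight bounds $\int_\Omega\sigma_z^{sp}(x)\,dz\le Ch^3$ and $\sum_{F}\int_F\sigma_z^{sp}(x)\,dz\le Ch^2$, the latter via dyadic shells, face-area counting and $sp>3$; the Gagliardo seminorm is then split into the same $I_1+I_2$, with $I_2$ controlled by $|\nabla\sigma_z^s|\le Ch^{-1}\sigma_z^s$, the comparability of $\sigma_z$ on an element, and $\int_K|x-y|^{p(1-l)-3}\,dx\le Ch^{p(1-l)}$ (your use of explicit polar coordinates for the volume bound, which the paper leaves as ``similar'', is only a cosmetic difference).
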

\begin{proof}
By a simple calculation, we have
\begin{align*}
\sum_{F\in\mathcal{E}_h}\int_F\|\sigma_z^sv\|_{L^p(\Omega)}^p dz=\int_{\Omega}|v(x)|^p\left(\sum_{F\in\mathcal{E}_h}\int_F\left(\frac{h}{|x-z|+h}\right)^{ps} dz\right)dx,
\end{align*}
and
\begin{align*}
\int_{\Omega}\|\sigma_z^sv\|_{L^p(\Omega)}^p dz=\int_{\Omega}|v(x)|^p\left(\int_{\Omega}\left(\frac{h}{|x-z|+h}\right)^{ps} dz
\right)dx.
\end{align*}
Thus, to obtain (\ref{fubini:1}), we only need to prove
\begin{align}
\sum_{F\in\mathcal{E}_h}\int_F\left(\frac{h}{|x-z|+h}\right)^{ps} dz\leq Ch^2,\label{fubini-proof:2}
\end{align}
and
\begin{align}
\int_{\Omega}\left(\frac{h}{|x-z|+h}\right)^{ps} dz\leq Ch^3.\label{fubini-proof:ad2}
\end{align}

In the proof of this lemma, we only prove (\ref{fubini-proof:2}), since (\ref{fubini-proof:ad2}) can be proved similarly. For the given $x\in\Omega$, let
\begin{align*}
\widehat{\Omega}_{\ast}(x)=&\{y\in \cup_{F\in\mathcal{E}_h}F: |y-x|\leq 2h\},\\
\widehat{\Omega}_j(x)=&\{y\in\cup_{F\in\mathcal{E}_h}F: 2^jh\leq |y-x|\leq 2^{j+1}h\},\quad j=1,\cdots,J-1,
\end{align*}
where $J=\log_2\left(\frac{{\rm diam}(\Omega)}{h}\right)$ such that $2^J h={\rm diam}(\Omega)$. Then,
\begin{align*}
\sum_{F\in\mathcal{E}_h}\int_F\left(\frac{h}{|x-z|+h}\right)^{ps} dz\leq& \int_{\widehat{\Omega}_{\ast}} dz+\sum_{j=1}^{J-1}\int_{\widehat{\Omega}_j}
\left(\frac{1}{2^j+1}\right)^{ps}dz\\
\leq&C2^3h^{2}+C\sum_{j=1}^{J-1}2^{3j}h^{2}2^{-jps}\\
\leq& Ch^{2}\left(1+\sum_{j=1}^{J-1}2^{-j(ps-3)}\right).
\end{align*}
Due to $ps>3$, we obtain (\ref{fubini-proof:2}) via the above estimate.

On the other hand, according to the definition of the mesh-dependent seminorm $|\cdot|_{W_h^{l,p}(\Omega)}$ for $l\in(0,1)$, we have
\begin{align}
|\sigma_z^s\nabla v|_{W_h^{l,p}(\Omega)}^p=&\sum_{K\in\mathcal{T}_h}\int_K\int_K\frac{|\sigma_z^s(x)\nabla v(x)-\sigma_z^s(y)\nabla v(y)|^p}{|x-y|^{3+pl}}dxdy\nonumber\\
\leq&C\sum_{K\in\mathcal{T}_h}\int_K\int_K\frac{\sigma_z^{ps}(x)|\nabla v(x)-\nabla v(y)|^p}{|x-y|^{3+pl}}dxdy\label{fubini-proof:3}\\
&+C\sum_{K\in\mathcal{T}_h}\int_K\int_K\frac{|\sigma_z^s(x)-\sigma_z^s(y)|^p|\nabla v(y)|^p}{|x-y|^{3+pl}}dxdy=I_1+I_2.\nonumber
\end{align}
For $I_1$, from (\ref{fubini-proof:2}) and (\ref{fubini-proof:ad2}), we have
\begin{align}
\sum_{F\in\mathcal{E}_h}\int_F I_1 dz\leq Ch^2|\nabla v|^p_{W_h^{l,p}(\Omega)},\quad {\rm and}\quad \int_{\Omega} I_1 dz\leq Ch^3|\nabla v|^p_{W_h^{l,p}(\Omega)}.
\label{fubini-proof:4}
\end{align}
As for $I_2$, using the integral mean value theorem and the fact that $|\nabla\sigma_z^s|\leq Ch^{-1}\sigma_z^s$ and $\sigma_z(x)\thickapprox\sigma_z(y)$ for $x,y\in K$, we know that there exists a constant $m\in (0,1)$ such that
\begin{align*}
|\sigma_z^s(x)-\sigma_z^s(y)|\leq& Ch^{-1}|x-y|\sigma_z^s(\xi)\\
\leq& Ch^{-1}|x-y|\left(\sigma_z^s(x)+\sigma_z^s(y)\right)\\
\leq&Ch^{-1}|x-y|\sigma_z^s(y),
\end{align*}
where $\xi=mx+(1-m)y$,
which, together with coordinate transformation, leads to
\begin{align}
I_2\leq Ch^{-p}\sum_{K\in\mathcal{T}_h}\int_K|\nabla v(y)|^p\sigma_z^{sp}(y)\left(\int_K|x-y|^{-3-p(l-1)} dx\right)dy\leq Ch^{-pl}\|\sigma_z^s\nabla v\|_{L_h^p(\Omega)}^p.\label{fubini-proof:5}
\end{align}
Then, (\ref{fubini-proof:3})-(\ref{fubini-proof:5}), together with (\ref{fubini:1}), yield the desired results (\ref{fubini:2}) and (\ref{fubini:3}).
\end{proof}

\begin{lemma}\label{fubini1}
Let $2\leq p<\infty$, $\frac{3}{p}<s$, $0<\beta<1-\frac{1}{p}$ and $w\in W^{1+\frac{1}{p}+\beta,p}_h(\Omega)$, then we have
\begin{align}
&\int_{\Omega}\sum_{F\in\mathcal{E}_h}\Big(h_F^{1-p}\|\sigma_z^s[w]\|_{L^p(F)}^p+
h_F\|\sigma_z^s\{\nabla w\cdot{\bm n}\}\|_{L^p(F)}^p\Big)+\|\sigma_z^s\nabla w\|_{L_h^p(\Omega)}^p dz\nonumber\\
\leq& Ch^3\left(h^{-p}\|w\|_{L^p(\Omega)}^p+\|\nabla w\|_{L_h^p(\Omega)}^p+h^{1+p\beta}|\nabla w|_{W_h^{\frac{1}{p}+\beta,p}(\Omega)}^p\right),\label{fubini1:1}
\end{align}
and
\begin{align}
&\sum_{F\in\mathcal{E}_h}\int_{F}\sum_{F\in\mathcal{E}_h}\Big(h_F^{1-p}\|\sigma_z^s[w]\|_{L^p(F)}^p+
h_F\|\sigma_z^s\{\nabla w\cdot{\bm n}\}\|_{L^p(F)}^p\Big)dz\nonumber\\
&+\sum_{F\in\mathcal{E}_h}\int_{F}h^{-p}\|\sigma_z^sw\|_{L^p(\Omega)}^p+\|\sigma_z^s\nabla w\|_{L_h^p(\Omega)}^p dz\label{fubini1:2}\\
\leq& Ch^2\left(h^{-p}\|w\|_{L^p(\Omega)}^p+\|\nabla w\|_{L_h^p(\Omega)}^p+h^{1+p\beta}|\nabla w|_{W_h^{\frac{1}{p}+\beta,p}(\Omega)}^p\right),\nonumber
\end{align}
where $\sigma_z$ is a weight function defined in (\ref{swf}).
\end{lemma}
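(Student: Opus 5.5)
The plan is to use Fubini and exchange the order of integration, exactly as in the proof of Lemma \ref{fubini}. Every term on the left-hand sides of (\ref{fubini1:1}) and (\ref{fubini1:2}) has the form $\int |g(x)|^p\sigma_z^{ps}(x)\,d\mu(x)$. Here $\mu$ is either Lebesgue measure on $\Omega$, or surface measure on the faces weighted by $h_F$ or $h_F^{1-p}$. Integrating in $z$ first, over $\Omega$ or over $\cup_F F$, and using (\ref{fubini-proof:ad2}) and (\ref{fubini-proof:2}) uniformly in $x$ (they hold for any $x\in\overline{\Omega}$, in particular for $x$ on a face), gives
\begin{align*}
\int_{\Omega}\sigma_z^{ps}(x)\,dz\leq Ch^3,\qquad \sum_{F'\in\mathcal{E}_h}\int_{F'}\sigma_z^{ps}(x)\,dz\leq Ch^2,
\end{align*}
because $ps>3$. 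Hence the volume terms $\|\sigma_z^s\nabla w\|^p_{L^p_h(\Omega)}$ and $h^{-p}\|\sigma_z^s w\|_{L^p(\Omega)}^p$ reduce directly to $Ch^3$ (resp.\ $Ch^2$) times $\|\nabla w\|^p_{L^p_h(\Omega)}$ and $h^{-p}\|w\|^p_{L^p(\Omega)}$. This is just (\ref{fubini:1}).

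For the face terms, I again interchange the order of integration and get
\begin{align*}
\int_{\Omega}\sum_{F}h_F^{1-p}\|\sigma_z^s[w]\|^p_{L^p(F)}dz
=\sum_F h_F^{1-p}\int_F|[w](x)|^p\Big(\int_\Omega\sigma_z^{ps}(x)dz\Big)dx
\leq Ch^3\sum_F h_F^{1-p}\|[w]\|^p_{L^p(F)},
\end{align*}
and similarly obtain $Ch^3\sum_F h_F\|\{\nabla w\cdot{\bm n}\}\|^p_{L^p(F)}$. For (\ref{fubini1:2}) the same computation gives the factor $h^2$. It therefore remains to bound the unweighted face quantities. I bound $|[w]|^p$ by the sum of the traces from $K^\pm$ and apply Lemma \ref{tra} with $\beta=1-\frac{1}{p}$ to $w|_K$:
\begin{align*}
h_F^{1-p}\|w|_K\|^p_{L^p(F)}\leq C\big(h^{-p}\|w\|^p_{L^p(K)}+\|\nabla w\|^p_{L^p(K)}\big).
\end{align*}
For the flux term I apply Lemma \ref{tra} with the given $\beta$ to $\nabla w|_K$:
\begin{align*}
h_F\|\nabla w|_K\|^p_{L^p(F)}\leq C\big(\|\nabla w\|^p_{L^p(K)}+h^{1+p\beta}|\nabla w|^p_{W^{\frac1p+\beta,p}(K)}\big),
\end{align*}
using quasi-uniformity so that $h_F\approx h_K\approx h$. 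Summing over $K$ gives exactly the right-hand sides of (\ref{fubini1:1}) and (\ref{fubini1:2}), since each element has a bounded number of faces, uniformly by the polyhedral mesh assumption.

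An alternative order is to first apply Lemma \ref{tra} to $\sigma_z^s w$ and then use Lemma \ref{fubini} (\ref{fubini:2})--(\ref{fubini:3}) with $l=\frac1p+\beta$ to handle the fractional seminorm of the weighted gradient. I consider this unnecessary, because the weight is only a scalar multiplier on the face and Fubini can be applied before the trace inequality. I will still keep it in mind in case the weight must sit inside the trace estimate. In that variant, $h^{1+p\beta}\cdot h^{-pl}=h^{0}$, so the extra term is $\|\nabla w\|^p_{L^p_h}$ and is harmless.

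The only point that needs care, and the step I expect to be the main obstacle, is justifying the uniform-in-$x$ bounds $\int\sigma_z^{ps}(x)\,dz\lesssim h^3$ (resp.\ $h^2$) when $x$ itself lies on a face and $z$ ranges over the skeleton $\cup_F F$. This needs the dyadic counting argument of (\ref{fubini-proof:2}), namely that the skeleton area inside a ball of radius $2^{j+1}h$ is $O(2^{3j}h^2)$, which follows from quasi-uniformity. The annuli then sum geometrically precisely because $ps>3$.
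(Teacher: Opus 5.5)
Your argument is correct, but it runs in the opposite order from the paper's. The paper applies Lemma \ref{tra}, for each fixed $z$, to the weighted functions $\sigma_z^s w$ (with $\beta=1-\frac{1}{p}$) and $\sigma_z^s\nabla w$ (with the given $\beta$). It absorbs the product-rule term using $|\nabla\sigma_z^s|\leq Ch^{-1}\sigma_z^s$, and only then integrates in $z$, invoking (\ref{fubini:1}) together with the weighted fractional estimate (\ref{fubini:3}) (resp.\ (\ref{fubini:2})) with $l=\frac{1}{p}+\beta$. That is precisely the ``alternative'' variant you set aside.

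You instead note that every face term is an integral of $\sigma_z^{ps}|g|^p$ against surface measure, so you can interchange the integrals first. After that you need only the kernel bounds (\ref{fubini-proof:ad2}) and (\ref{fubini-proof:2}) at points $x$ on faces, followed by the unweighted trace inequality. This route is more elementary. It bypasses (\ref{fubini:2})--(\ref{fubini:3}) entirely, and with them the mean-value estimate for $\sigma_z^s(x)-\sigma_z^s(y)$ and the comparability $\sigma_z(x)\approx\sigma_z(y)$ on elements.

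The step you flag as the main obstacle is not really one. The dyadic argument for (\ref{fubini-proof:2}) never uses that $x$ is interior, and (\ref{fubini-proof:ad2}) follows by extending the integral to $\mathbb{R}^3$ and scaling, since $ps>3$.

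What the paper's ordering buys is a pointwise-in-$z$ inequality, bounding the weighted face terms by weighted volume terms, which it reuses verbatim in the HDG analysis of Subsection \ref{sec5:2}. Your ordering yields only the integrated statement, but that is all the lemma asserts.
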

\begin{proof}
Here we only prove (\ref{fubini1:1}), since (\ref{fubini1:2}) can be proved similarly.
Using Lemma \ref{tra}, Lemma \ref{fubini}, and the fact that $|\nabla\sigma_z^s|\leq Ch^{-1}\sigma_z^s$, we obtain
\begin{align*}
&\int_{\Omega}\sum_{F\in\mathcal{E}_h}\Big(h_F^{1-p}\|\sigma_z^s[w]\|_{L^p(F)}^p+
h_F\|\sigma_z^s\{\nabla w\cdot{\bm n}\}\|_{L^p(F)}^p\Big)+\|\sigma_z^s\nabla w\|_{L_h^p(\Omega)}^p dz\\
\leq&C\int_{\Omega}h^{-p}\|\sigma_z^sw\|_{L^p(\Omega)}^p+\|\nabla(\sigma_z^sw)\|^p_{L_h^p(\Omega)}+\|\sigma_z^s\nabla w\|_{L_h^p(\Omega)}^pdz\\
&+C\int_{\Omega}\|\sigma_z^s\nabla w\|_{L_h^p(\Omega)}^p+h^{1+p\beta}|\sigma_z^s\nabla w|_{W^{\frac{1}{p}+\beta,p}_h(\Omega)}^pdz\\
\leq& C\int_{\Omega}h^{-p}\|\sigma_z^sw\|_{L^p(\Omega)}^p+\|\sigma_z^s\nabla w\|_{L_h^p(\Omega)}^p+h^{1+p\beta}|\sigma_z^s\nabla w|_{W^{\frac{1}{p}+\beta,p}_h(\Omega)}^pdz\\
\leq& Ch^3\left(h^{-p}\|w\|_{L^p(\Omega)}^p+\|\nabla w\|_{L_h^p(\Omega)}^p+h^{1+p\beta}|\nabla w|_{W_h^{\frac{1}{p}+\beta,p}(\Omega)}^p\right),
\end{align*}
which is the desired result (\ref{fubini1:1}). We conclude the proof.
\end{proof}

\section{}\label{apendixB}
In this section, we establish the inf-sup stability for the symmetric IPDG method, the hybrid mixed DG method, and two representative HDG methods introduced in this paper. We note that these inf-sup stability
are of independent interest in the field of numerical analysis.

\subsection{Inf-sup stability for IPDG methods}
First of all, we establish the inf-sup stability for the IPDG method. Before this, we introduce a projection operator $\mathcal{I}_h^{\rm DG}: L^1(\Omega)
\rightarrow V_h^{\rm DG}$ such that on each element $K\in\mathcal{T}_h$,
\begin{align}
(\mathcal{I}_h^{\rm DG}w,v_h)_K=(w,v_h)_K,\quad \forall v_h\in\mathcal{P}^k(K),~w\in L^1(\Omega).\label{po-IPDG}
\end{align}
Associated with the projection operator $\mathcal{I}_h^{\rm DG}$, we have the following stability and interpolation error estimates .
\begin{lemma}\label{po-IPDG-is}
Let the projection operator $\mathcal{I}_h^{\rm DG}$ be defined as above, then for $1\leq p\leq \infty$ and $w\in L^p(\Omega)$ we have
\begin{align}
\|\mathcal{I}_h^{{\rm DG}}w\|_{L^p(K)}\leq C\|w\|_{L^p(K)},\label{po-IPDG-is:1}
\end{align}
and
\begin{align}
\|w-\mathcal{I}_h^{{\rm DG}}w\|_{L^p(K)}\leq C\inf_{w_h\in \mathcal{P}^k(K)}\|w-w_h\|_{L^p(K)},\label{po-IPDG-is:2}
\end{align}
for each element $K\in\mathcal{T}_h$.
\end{lemma}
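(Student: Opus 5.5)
The plan is to use a scaling argument on a reference element, exploiting the fact that $\mathcal{I}_h^{\rm DG}$ is the local $L^2$-orthogonal projection onto $\mathcal{P}^k(K)$, so it acts elementwise.

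\emph{Step 1: scaling to a reference element.} Fix $K\in\mathcal{T}_h$. On a simplicial mesh, let $F_K:\widehat{K}\to K$, $x={\bm B}_K\widehat{x}+{\bm b}_K$, be the affine map from the reference simplex. Since polynomial spaces are invariant under affine maps, we have $\widehat{\mathcal{I}_h^{\rm DG}w}=\widehat{\mathcal{I}}\widehat{w}$, where $\widehat{\mathcal{I}}$ is the $L^2(\widehat K)$-projection onto $\mathcal{P}^k(\widehat K)$. The quantity $\|\cdot\|_{L^p(K)}$ scales like $|\det {\bm B}_K|^{1/p}\|\cdot\|_{L^p(\widehat K)}$ on both sides, so (\ref{po-IPDG-is:1}) reduces to $\|\widehat{\mathcal{I}}\widehat w\|_{L^p(\widehat K)}\le C\|\widehat w\|_{L^p(\widehat K)}$.

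\emph{Step 2: reference estimate.} Choose an $L^2(\widehat K)$-orthonormal basis $\{\widehat\phi_i\}$ of $\mathcal{P}^k(\widehat K)$. Then $\widehat{\mathcal{I}}\widehat w=\sum_i(\widehat w,\widehat\phi_i)\widehat\phi_i$, and
\[
|(\widehat w,\widehat\phi_i)|\le\|\widehat\phi_i\|_{L^\infty(\widehat K)}\|\widehat w\|_{L^1(\widehat K)}\le C\|\widehat w\|_{L^p(\widehat K)}
\]
by H\"older's inequality on the bounded set $\widehat K$. Since each $\|\widehat\phi_i\|_{L^p(\widehat K)}$ is a fixed constant, this gives the reference bound for all $1\le p\le\infty$, and hence (\ref{po-IPDG-is:1}).

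\emph{Step 3: general polyhedral elements.} Here there is no single reference element, and I expect this to be the only mildly delicate point. The approach is to avoid a reference element for $K$ itself and argue directly with a uniform constant. Write $\mathcal{I}_h^{\rm DG}w=\sum_i (w,\phi_i)_K\phi_i$, where $\{\phi_i\}$ is an $L^2(K)$-orthonormal basis of $\mathcal{P}^k(K)$. Then
\[
\|\mathcal{I}_h^{\rm DG}w\|_{L^p(K)}\le \sum_i\|\phi_i\|_{L^\infty(K)}\|\phi_i\|_{L^p(K)}\|w\|_{L^1(K)}\le C h_K^{-3/2}\,h_K^{3/p-3/2}\,h_K^{3-3/p}\|w\|_{L^p(K)}=C\|w\|_{L^p(K)}.
\]
The bounds $\|\phi_i\|_{L^\infty(K)}\le Ch_K^{-3/2}$ and $\|\phi_i\|_{L^p(K)}\le Ch_K^{3/p-3/2}$ come from the inverse inequality Lemma \ref{inv} with $q=2$ (note $\|\phi_i\|_{L^2(K)}=1$). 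That lemma holds uniformly on polyhedral elements because of the shape-regular sub-triangulation with at most $\mathcal{N}$ sub-elements: $K$ contains a sub-simplex of diameter comparable to $h_K$, and all norms of polynomials on $K$ are controlled by those on that sub-simplex. The same computation also covers the simplicial case.

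\emph{Step 4: quasi-optimality.} For (\ref{po-IPDG-is:2}), use that $\mathcal{I}_h^{\rm DG}w_h=w_h$ for every $w_h\in\mathcal{P}^k(K)$. Writing $w-\mathcal{I}_h^{\rm DG}w=(w-w_h)-\mathcal{I}_h^{\rm DG}(w-w_h)$ and applying (\ref{po-IPDG-is:1}) gives
\[
\|w-\mathcal{I}_h^{\rm DG}w\|_{L^p(K)}\le (1+C)\|w-w_h\|_{L^p(K)}.
\]
Taking the infimum over $w_h\in\mathcal{P}^k(K)$ finishes the proof.
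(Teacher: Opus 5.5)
Your proof is correct, and Step 4 is exactly the paper's argument for (\ref{po-IPDG-is:2}). For the stability bound (\ref{po-IPDG-is:1}), however, you take a different route.

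\textbf{The paper's route.} It splits into two cases:
\begin{itemize}
\item for $2\le p\le\infty$ it chains the inverse inequality $\|\mathcal{I}_h^{\rm DG}w\|_{L^p(K)}\le Ch_K^{3(\frac1p-\frac12)}\|\mathcal{I}_h^{\rm DG}w\|_{L^2(K)}$, the $L^2$-contractivity of the projection, and H\"older's inequality;
\item for $1\le p<2$ it uses duality together with the symmetry $(\mathcal{I}_h^{\rm DG}w,v)_K=(w,\mathcal{I}_h^{\rm DG}v)_K$, transferring the bound already proved for the conjugate exponent $q>2$.
\end{itemize}

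\textbf{Your route.} You expand in an $L^2(K)$-orthonormal basis and in effect prove the single local estimate $\|\mathcal{I}_h^{\rm DG}w\|_{L^\infty(K)}\le Ch_K^{-3}\|w\|_{L^1(K)}$. Every $1\le p\le\infty$ then follows from it by H\"older, with no case distinction and no duality. Your exponent bookkeeping $-\frac32+(\frac3p-\frac32)+(3-\frac3p)=0$ is right. For $p<2$ the bound $\|\phi_i\|_{L^p(K)}\le Ch_K^{\frac3p-\frac32}$ is just H\"older, which Lemma \ref{inv} covers anyway.

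\textbf{Comparison.} Both arguments rest on the same $L^\infty$--$L^2$ (or $L^p$--$L^2$) inverse inequality on possibly polyhedral elements. You also sketch why that inequality holds uniformly via the sub-triangulation, which the paper simply takes from Lemma \ref{inv}.
\begin{itemize}
\item The paper's version is basis-free and uses only $L^2$-stability plus symmetry. This is what makes the ``very similar'' extension to $\mathcal{I}_h^W$, $\mathcal{I}_h^V$, $\mathcal{I}_h^{\widehat V}$ in Lemma \ref{po-HDG-is} immediate.
\item Yours extends just as easily, since it needs only a finite-dimensional local space with an inverse estimate. It also yields the slightly stronger $L^1$--$L^\infty$ bound.
\end{itemize}

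Your Steps 1--2 are redundant once Step 3 is in place. Their affine-invariance observation does, however, show that on simplices the stability constant is independent of shape regularity.
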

\begin{proof}
For $2\leq p\leq \infty$, Lemma \ref{inv}, (\ref{po-IPDG}), and the H\"{o}lder inequality yield
\begin{align}
\|\mathcal{I}_h^{{\rm DG}}w\|_{L^p(K)}\leq Ch_K^{3(\frac{1}{p}-\frac{1}{2})}\|\mathcal{I}_h^{{\rm DG}}w\|_{L^2(K)}\leq Ch_K^{3(\frac{1}{p}-\frac{1}{2})}\|w\|_{L^2(K)}\leq C\|w\|_{L^p(K)}.\label{po-IPDG-is-proof:1}
\end{align}
For $1\leq p<2$, let $q$ be the number such that $\frac{1}{p}+\frac{1}{q}=1$. Due to $1\leq p< 2$, we have $2< q\leq \infty$. Then, (\ref{po-IPDG}) and
(\ref{po-IPDG-is-proof:1}) yield
\begin{align*}
\|\mathcal{I}_h^{{\rm DG}}w\|_{L^p(K)}=\sup_{0\neq v\in L^q(K)}\frac{(\mathcal{I}_h^{\rm DG}w,v)_K}{\|v\|_{L^q(K)}}=\sup_{0\neq v\in L^q(K)}\frac{(w,\mathcal{I}_h^{\rm DG}v)_K}{\|v\|_{L^q(K)}}\leq C\|w\|_{L^p(K)}.
\end{align*}
Thus, the stability result (\ref{po-IPDG-is:1}) holds for $1\leq p\leq \infty$.

Next, we prove the interpolation error estimate (\ref{po-IPDG-is:2}). For any $w_h\in \mathcal{P}^k(K)$, (\ref{po-IPDG-is:1}) obtains
\begin{align*}
\|w-\mathcal{I}_h^{{\rm DG}}w\|_{L^p(K)}\leq\|w-w_h\|_{L^p(K)}+\|\mathcal{I}_h^{\rm DG}(w-w_h)\|_{L^p(K)}
\leq C\|w-w_h\|_{L^p(K)}.
\end{align*}
We conclude the proof.
\end{proof}

Based on the projection operator $\mathcal{I}_h^{\rm DG}$ and Lemma \ref{po-IPDG-is}, we now prove the following inf-sup stability.
\begin{lemma}\label{inf-sup}
For any given constant $\epsilon\in(0,\frac{1}{2})$ and $\frac{1}{1-\epsilon}<p<\frac{1}{\epsilon}$, there exists a positive constant $C$ independent of the mesh size such that
\begin{align}
|||v_h|||_{W^{1,q}_h(\Omega)}\leq C\sup_{0\neq w_h\in V_h^{{\rm DG}}}\frac{a_h(v_h,w_h)}{|||w_h|||_{W_h^{1,p}(\Omega)}},\quad \forall v_h\in V_h^{{\rm DG}},\label{inf-sup-1}
\end{align}
where $q$ is the conjugate number of $p$ such that $\frac{1}{p}+\frac{1}{q}=1$ and $|||\cdot|||_{W_h^{1,p}(\Omega)}$ is the mesh-dependent norm defined in
Theorem \ref{W1p-IPDG}.
\end{lemma}
\begin{proof}
To prove (\ref{inf-sup-1}) for the case $\frac{1}{1-\epsilon}<p\leq 2$, let $v\in H_0^1(\Omega)$ be the solution of the following problem
\begin{align*}
-\Delta v=\mathcal{L}_h v_h
\end{align*}
where $\mathcal{L}_hv_h\in V_h^{{\rm DG}}$ satisfies
\begin{align*}
(\mathcal{L}_hv_h,\phi_h)=a_h(v_h,\phi_h),\quad \forall\phi_h\in V_h^{{\rm DG}}.
\end{align*}
By the definition of $\mathcal{L}_hv_h$, $v_h\in V_h^{{\rm DG}}$ is the IPDG approximation of $v$.
Since $2\leq q<\frac{1}{\epsilon}$ for the case $\frac{1}{1-\epsilon}<p\leq 2$, (\ref{W1p-IPDG:1}) yields
\begin{align}
|||v_h|||_{W_h^{1,q}(\Omega)}\leq& |||v-v_h|||_{W_h^{1,q}(\Omega)}+\|\nabla v\|_{L^q(\Omega)}\nonumber\\
\leq& C\left(\|\nabla v\|_{L^{q}(\Omega)}+h^{\frac{1}{q}+\epsilon}|\nabla v|_{W^{\frac{1}{q}+\epsilon,q}(\Omega)}\right).\label{inf-sup:1}
\end{align}
From the inverse estimates, (\ref{model-r:3}), and (\ref{model-r:2}) with $s=1$, we have
\begin{align}
&\|\nabla v\|_{L^{q}(\Omega)}+h^{\frac{1}{q}+\epsilon}|\nabla v|_{W^{\frac{1}{q}+\epsilon,q}(\Omega)}\nonumber\\
\leq& C\Big(\|\mathcal{L}_hv_h\|_{W^{-1,q}(\Omega)}+h^{\frac{1}{q}+\epsilon}\|\mathcal{L}_hv_h\|_{W^{\frac{1}{q}+\epsilon-1,q}(\Omega)}\Big)
\leq C\|\mathcal{L}_hv_h\|_{W^{-1,q}(\Omega)}\label{inf-sup:2}\\
=&C\sup_{0\neq \phi\in W_0^{1,p}(\Omega)}\frac{(\mathcal{L}_hv_h,\phi)}{\|\nabla\phi\|_{L^p(\Omega)}}
=C\sup_{0\neq \phi\in W_0^{1,p}(\Omega)}\frac{(\mathcal{L}_hv_h,\mathcal{I}_h^{{\rm DG}}\phi)}{\|\nabla\phi\|_{L^p(\Omega)}}\nonumber\\
=&C\sup_{0\neq\phi \in W_0^{1,p}(\Omega)}\frac{a_h(v_h,\mathcal{I}_h^{\rm DG}\phi)}{\|\nabla\phi\|_{L^p(\Omega)}}.\nonumber
\end{align}
In addition, Lemma \ref{inv}, Lemma \ref{tra} with $\beta=1-\frac{1}{p}$, and Lemma \ref{po-IPDG-is} yield
\begin{align*}
|||\mathcal{I}_h^{\rm DG}\phi|||_{W_h^{1,p}(\Omega)}\leq& |||\mathcal{I}_h^{\rm DG}\phi-\phi|||_{W_h^{1,p}(\Omega)}+\|\nabla\phi\|_{L^p(\Omega)}\\
\leq& C\left(\|\nabla(\phi-\mathcal{I}_h^{\rm DG}\phi)\|_{L_h^p(\Omega)}+h^{-1}\|\phi-\mathcal{I}_h^{\rm DG}\phi\|_{L^p(\Omega)}\right)+\|\nabla\phi\|_{L^p(\Omega)}\\
\leq& C\|\nabla\phi\|_{L^p(\Omega)},
\end{align*}
which, together with (\ref{inf-sup:2}), leads to
\begin{align}
&\|\nabla v\|_{L^{q}(\Omega)}+h^{\frac{1}{q}+\epsilon}|\nabla v|_{W^{\frac{1}{q}+\epsilon,q}(\Omega)}\nonumber\\
\leq& C\sup_{0\neq \phi \in W_0^{1,p}(\Omega)}\frac{a_h(v_h,\mathcal{I}_h^{\rm DG}\phi)}{|||\mathcal{I}_h^{\rm DG}\phi|||_{W_h^{1,p}(\Omega)}}
\leq C\sup_{0\neq w_h\in V_h^{{\rm DG}}}\frac{a_h(v_h,w_h)}{|||w_h|||_{W_h^{1,p}(\Omega)}}.\label{inf-sup:3}
\end{align}
Combining (\ref{inf-sup:1}) and (\ref{inf-sup:3}), we derive (\ref{inf-sup-1}) for the case $\frac{1}{1-\epsilon}<p\leq 2$.

With (\ref{inf-sup-1}) for the case $\frac{1}{1-\epsilon}<p\leq 2$ established, by the proof of Theorem \ref{W1p-IPDG} presented in Subsection \ref{sec5:1}, we know that (\ref{W1p-IPDG:1})
holds for $\frac{1}{1-\epsilon}<p<2$, and consequently, (\ref{inf-sup:1}) holds for $\frac{1}{1-\epsilon}<q<2$. Then, using the same argument as above, we can also establish (\ref{inf-sup-1}) (with the proof omitted) for the case $2<p<\frac{1}{\epsilon}$. We conclude the proof.
\end{proof}

\subsection{Inf-sup stability for the hybrid mixed DG and HDG methods}
Next, we establish the inf-sup stability for the hybrid mixed DG method and two representative HDG methods.
To this end, we introduce three projection operators $\mathcal{I}_h^W: (L^1(\Omega))^3\rightarrow {\bm W}_h$, $\mathcal{I}_h^V: L^1(\Omega)\rightarrow
V_h$, and $\mathcal{I}_h^{\widehat{V}}: L^1(\mathcal{E}_h)\rightarrow \widehat{V}_h$. On each element $K\in\mathcal{T}_h$ and face $F\in\mathcal{E}_h$,
these projection operators are defined by
\begin{align}
(\mathcal{I}_h^W{\bm w},{\bm v}_h)_K=&({\bm w},{\bm v}_h)_K,\quad \forall {\bm v}_h\in {\bm W}(K),~{\bm w}\in(L^1(\Omega))^3,\label{po-HDG:1}\\
(\mathcal{I}_h^V w,v_h)_K=&(w,v_h)_K,\quad \forall v_h\in V(K),~w\in L^1(\Omega),\label{po-HDG:2}\\
\langle\mathcal{I}_h^{\widehat{V}}\mu,\widehat{v}_h\rangle_F=&\langle\mu,\widehat{v}_h\rangle_F,\quad \forall \widehat{v}_h\in\mathcal{P}^k(F),~\mu\in L^1(\mathcal{E}_h).\label{po-HDG:3}
\end{align}
Associated with these three projection operators, we have the following stability and interpolation error estimates.
\begin{lemma}\label{po-HDG-is}
Let the projection operators $\mathcal{I}_h^W$, $\mathcal{I}_h^V$, and $\mathcal{I}_h^{\widehat{V}}$ be defined as above, then for $1\leq p\leq \infty$ and $({\bm w}, w, \mu)\in (L^p(\Omega))^3\times L^p(\Omega)\times L^p(\mathcal{E}_h)$ we have
\begin{align}
\|\mathcal{I}_h^W{\bm w}\|_{L^p(K)}\leq C\|{\bm w}\|_{L^p(K)},\quad \|\mathcal{I}_h^Vw\|_{L^p(K)}\leq C\|w\|_{L^p(K)},\quad
\|\mathcal{I}_h^{\widehat{V}}\mu\|_{L^p(F)}\leq C\|\mu\|_{L^p(F)},\label{po-HDG-is:1}
\end{align}
and
\begin{align}
\|{\bm w}-\mathcal{I}_h^W{\bm w}\|_{L^p(K)}\leq& C\inf_{{\bm w}_h\in {\bm W}(K)}\|{\bm w}-{\bm w}_h\|_{L^p(K)},\\
\|w-\mathcal{I}_h^Vw\|_{L^p(K)}\leq& C\inf_{w_h\in V(K)}\|w-w_h\|_{L^p(K)},\\
\|\mu-\mathcal{I}_h^{\widehat{V}}\mu\|_{L^p(F)}\leq& C\inf_{\mu_h\in\mathcal{P}^k(F)}\|\mu-\mu_h\|_{L^p(F)},
\end{align}
for each element $K\in\mathcal{T}_h$ and face $F\in\mathcal{E}_h$.
\end{lemma}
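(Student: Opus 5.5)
The proof will copy the argument of Lemma \ref{po-IPDG-is} for each of the three operators. $\mathcal{I}_h^W$, $\mathcal{I}_h^V$ and $\mathcal{I}_h^{\widehat{V}}$ are, element by element or face by face, $L^2$-orthogonal projections onto finite-dimensional polynomial spaces: ${\bm W}(K)$, $V(K)$ and $\mathcal{P}^k(F)$. Each of these spaces contains the constants and is invariant under the affine map to a reference configuration. (For the hybrid mixed DG method, ${\bm W}(K)=(\mathcal{P}^k(K))^3+{\bm x}\mathcal{P}^k(K)\subset(\mathcal{P}^{k+1}(K))^3$.) So we will treat a generic local space $X(D)\subset \mathcal{P}^{m}(D)$ with $m\le k+1$, where $D=K$ or $D=F$ has dimension $d_D\in\{3,2\}$, and write $P_D$ for the $L^2(D)$-orthogonal projection onto $X(D)$.

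\textbf{Step 1: the case $2\le p\le\infty$.} We will use Lemma \ref{inv} with $i=j=0$ on $D$ (for $d_D=3$ or $2$), the $L^2$-stability of $P_D$, and H\"older's inequality on $D$:
\begin{align*}
\|P_Dw\|_{L^p(D)}\le Ch_D^{d_D(\frac1p-\frac12)}\|P_Dw\|_{L^2(D)}\le Ch_D^{d_D(\frac1p-\frac12)}\|w\|_{L^2(D)}\le C\|w\|_{L^p(D)}.
\end{align*}
Two remarks are needed here. For a general polyhedral element $K$, Lemma \ref{inv} must hold on $K$ itself. This follows because of the assumed shape-regular sub-triangulation with at most $\mathcal{N}$ pieces: polynomial norms on $K$ are then equivalent, uniformly, to sums over the sub-elements. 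For faces, $F$ is a polygon, and the same sub-triangulation argument in two dimensions applies.

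\textbf{Step 2: the case $1\le p<2$.} Let $q$ be the conjugate exponent, so $2<q\le\infty$. Since $P_D$ is self-adjoint in $L^2(D)$,
\begin{align*}
\|P_Dw\|_{L^p(D)}=\sup_{0\neq v\in L^q(D)}\frac{(P_Dw,v)_D}{\|v\|_{L^q(D)}}=\sup_{0\neq v\in L^q(D)}\frac{(w,P_Dv)_D}{\|v\|_{L^q(D)}}\le C\|w\|_{L^p(D)}.
\end{align*}
The last inequality uses H\"older's inequality and Step 1 applied with exponent $q$. The same computation, done componentwise or with the vector inner product, covers $\mathcal{I}_h^W$. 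This proves (\ref{po-HDG-is:1}).

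\textbf{Step 3: the quasi-optimality estimates.} Since $P_D$ reproduces $X(D)$, for any $w_h\in X(D)$ we have
\begin{align*}
\|w-P_Dw\|_{L^p(D)}\le\|w-w_h\|_{L^p(D)}+\|P_D(w-w_h)\|_{L^p(D)}\le C\|w-w_h\|_{L^p(D)}.
\end{align*}
Taking the infimum over $w_h$ gives the remaining three estimates.

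\textbf{Main obstacle.} The only point requiring care is that the constant in Step 1 must be uniform in $h$ on general polyhedral elements and polygonal faces. I would justify this through the sub-triangulation assumption: on each shape-regular sub-simplex, the polynomial $L^p$ and $L^2$ norms compare with the right powers of $h$ by scaling to the reference element, and there are at most $\mathcal{N}$ sub-simplices of comparable size.
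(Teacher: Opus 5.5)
Your proposal is correct and takes the same approach as the paper. The paper omits this proof as "very similar" to that of Lemma \ref{po-IPDG-is}, which uses the inverse inequality, $L^2$-stability and H\"older's inequality for $2\le p\le\infty$, duality via self-adjointness for $1\le p<2$, and the triangle inequality for quasi-optimality; these are exactly your Steps 1--3, and your observation that ${\bm W}(K)\subset(\mathcal{P}^{k+1}(K))^3$ for the hybrid mixed DG method, so that Lemma \ref{inv} applies, is the right detail to check. One harmless inaccuracy: ${\bm W}(K)=(\mathcal{P}^k(K))^3+{\bm x}\mathcal{P}^k(K)$ is not invariant under plain affine pullback (only under a Piola-type map), but your argument never uses that claim.
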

\begin{proof}
Since the proof is very similar to that in Lemma \ref{po-IPDG-is}, we omit it here.
\end{proof}

Based on Lemma \ref{po-HDG-is}, we now establish the following inf-sup stability for the hybrid mixed DG method and two representative HDG methods.

\begin{lemma}\label{inf-sup-HDG0}
For any given constant $\epsilon\in (0,\frac{1}{2})$ and $\frac{1}{1-\epsilon}<p<\frac{1}{\epsilon}$, there exists a positive constant $C$ independent of
the mesh size such that the following inf-sup stability holds:
\begin{align}
|||({\bm w}_{1,h},v_{1,h},\widehat{v}_{1,h})|||_{W_{h^{\prime}}^{1,q}(\Omega)}\leq C\sup_{0\neq ({\bm w}_{2,h},v_{2,h},\widehat{v}_{2,h})\in{\bm V}_h}
\frac{\mathcal{B}({\bm w}_{1,h},v_{1,h},\widehat{v}_{1,h};{\bm w}_{2,h},v_{2,h},\widehat{v}_{2,h})}{|||({\bm w}_{2,h},v_{2,h},\widehat{v}_{2,h})|||_{W_{h^{\prime}}^{1,p}(\Omega)}},\label{inf-sup-HDG0:1}
\end{align}
for any $({\bm w}_{1,h},v_{1,h},\widehat{v}_{1,h})\in\widetilde{{\bm V}}_h:=\{({\bm w}_h,v_h,\widehat{v}_h)\in{\bm V}_h:\mathcal{B}({\bm w}_h,v_h,\widehat{v}_h;0,0,\widehat{\phi}_h)=0,~\forall \widehat{\phi}_h\in\widehat{V}_h\}$, where ${\bm V}_h={\bm W}_h\times V_h\times\widehat{V}_h$, $\mathcal{B}$ is defined in Subsection \ref{sec3:2}, $q$ is the conjugate number of $p$ such that $\frac{1}{p}+\frac{1}{q}=1$, and the mesh-dependent norm
$|||(\cdot,\cdot,\cdot)|||_{W_{h^{\prime}}^{1,p}(\Omega)}$ is defined in Corollary \ref{W1p-HDG-c1}.
\end{lemma}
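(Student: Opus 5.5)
We argue as for Lemma \ref{inf-sup}, first for $\frac{1}{1-\epsilon}<p\leq 2$ (so $2\leq q<\frac{1}{\epsilon}$) and then for $2<p<\frac1\epsilon$.

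\emph{Step 1: identify $({\bm w}_{1,h},v_{1,h},\widehat v_{1,h})$ as an HDG approximation.} Given $({\bm w}_{1,h},v_{1,h},\widehat{v}_{1,h})\in\widetilde{\bm V}_h$, define $\mathcal{L}_h^W\in{\bm W}_h$ and $\mathcal{L}_h^V\in V_h$ by
$$(\mathcal{L}_h^W,{\bm w}_h)_{\mathcal{T}_h}+(\mathcal{L}_h^V,v_h)_{\mathcal{T}_h}=\mathcal{B}({\bm w}_{1,h},v_{1,h},\widehat{v}_{1,h};{\bm w}_h,v_h,0)$$
for all $({\bm w}_h,v_h)\in{\bm W}_h\times V_h$. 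Since the triple lies in $\widetilde{\bm V}_h$, this identity also holds for test triples with a nonzero hybrid component. Let $(\bm\rho,v)$ solve
$$\bm\rho+\nabla v=-\mathcal{L}_h^W,\qquad \nabla\cdot\bm\rho=-\mathcal{L}_h^V,\qquad v|_{\partial\Omega}=0.$$
Then $({\bm w}_{1,h},v_{1,h},\widehat{v}_{1,h})$ is exactly the HDG approximation of $(\bm\rho,v)$ with these data. The source $\mathcal{L}_h^W$ sits in the flux equation, like ${\bm e}_i\delta_{h,z}$ in (\ref{rG-mixed2}).

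\emph{Step 2: apply the $q$-estimate.} Apply the already proven case $2\leq q<\frac1\epsilon$ of Theorem \ref{W1p-HDG-pre}, via Corollary \ref{W1p-HDG-c1}, to the pair $(\bm\rho,v)$. Together with stability of $\Pi_W,\Pi_V,\Pi_k$ and inverse estimates, this bounds $|||({\bm w}_{1,h},v_{1,h},\widehat v_{1,h})|||_{W_{h'}^{1,q}}$ by
$$C\big(\|\bm\rho\|_{L^q}+\|\nabla v\|_{L^q}+h^{\frac1q+\epsilon}|\bm\rho|_{W^{\frac1q+\epsilon,q}_h}+h^{\frac1q+\epsilon}|\nabla v|_{W^{\frac1q+\epsilon,q}_h}\big).$$

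\emph{Main obstacle: the fractional seminorms.} The flux datum is only piecewise polynomial, so $\mathcal{L}_h^W$ is discontinuous. I would split $v=v_1+v_2$, where $-\Delta v_1=\nabla\cdot\mathcal{L}_h^W$ and $-\Delta v_2=-\mathcal{L}_h^V$, both with zero boundary values. Then $\bm\rho=-\mathcal{L}_h^W+\nabla v_1-\nabla v_2$.

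For the term $-\mathcal{L}_h^W$, the elementwise seminorm $|\cdot|_{W_h^{\frac1q+\epsilon,q}}$ is controlled by inverse estimates, giving $h^{\frac1q+\epsilon}|\mathcal{L}_h^W|_{W_h^{\frac1q+\epsilon,q}}\leq C\|\mathcal{L}_h^W\|_{L^q}$.

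For $v_1$, use (\ref{model-r:2}) with $s=1$, combined with the fractional regularity (\ref{model-r:3}) and an inverse inequality from $W^{\frac1q+\epsilon-1,q}$ to $W^{-1,q}$. This works piecewise because $\frac1q+\epsilon<\frac12+\dots$ stays below the threshold where jumps across faces destroy membership in $W^{\frac1q+\epsilon,q}$. Exactly this point needs care, and is presumably why the paper restricts to $p<\frac1\epsilon$. Treat $v_2$ as in (\ref{inf-sup:2}).

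The result is a bound by $C\big(\|\mathcal{L}_h^W\|_{L^q}+\|\mathcal{L}_h^V\|_{W^{-1,q}}\big)$.

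\emph{Step 3: dualize.} Write
$$\|\mathcal{L}_h^W\|_{L^q}+\|\mathcal{L}_h^V\|_{W^{-1,q}}\leq C\sup_{({\bm\eta},\phi)}\frac{(\mathcal{L}_h^W,{\bm\eta})+(\mathcal{L}_h^V,\phi)}{\|{\bm\eta}\|_{L^p}+\|\nabla\phi\|_{L^p}},\qquad \phi\in W_0^{1,p}(\Omega).$$
Replace ${\bm\eta}$ by $\mathcal{I}_h^W{\bm\eta}$ and $\phi$ by $\mathcal{I}_h^V\phi$, together with $\mathcal{I}_h^{\widehat V}\phi$ on faces (zero on $\partial\Omega$). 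The numerator then equals $\mathcal{B}$ applied to these test triples, using the $\widetilde{\bm V}_h$ property for the hybrid part.

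By Lemma \ref{po-HDG-is}, Lemma \ref{tra} with $\beta=1-\frac1p$, and inverse estimates,
$$|||(\mathcal{I}_h^W{\bm\eta},\mathcal{I}_h^V\phi,\mathcal{I}_h^{\widehat V}\phi)|||_{W_{h'}^{1,p}}\leq C\big(\|{\bm\eta}\|_{L^p}+\|\nabla\phi\|_{L^p}\big).$$
This gives (\ref{inf-sup-HDG0:1}) for $p\leq2$.

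\emph{Step 4: bootstrap to $p>2$.} The estimate just proved is what the second half of the proof of Theorem \ref{W1p-HDG-pre} uses. It yields the error estimate for $q<2$, and repeating Steps 1--3 then gives the remaining range $2<p<\frac1\epsilon$.
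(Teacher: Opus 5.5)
\textbf{There is a genuine gap in Step 2: the fractional-regularity bound for the part of $v$ driven by the flux datum does not hold as you argue it.}

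Steps 1, 3 and 4 follow the paper's proof closely: Riesz representative of $\mathcal{B}$, identification with the HDG approximation of a mixed problem with a flux source and a scalar source, Corollary \ref{W1p-HDG-c1} with exponent $q\geq 2$, dualization through $\mathcal{I}_h^W,\mathcal{I}_h^V,\mathcal{I}_h^{\widehat V}$, and the bootstrap. Taking $\boldsymbol\eta$ independent of $\phi$ in Step 3 is the right choice. Only then do you control $\|\mathcal{L}_h^W\|_{L^q(\Omega)}$ itself, rather than only $\nabla\cdot\mathcal{L}_h^W$.

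\emph{Why Step 2 fails for $v_1$.} Here $-\Delta v_1=\nabla\cdot\mathcal{L}_h^W$, and your justification is false. A piecewise polynomial that jumps across faces lies in $W^{\theta,q}(\Omega)$ only for $\theta<\frac1q$. The exponent $\frac1q+\epsilon$ is \emph{above} this threshold for every $\epsilon>0$. Equivalently, $\nabla\cdot\mathcal{L}_h^W$ contains the surface measures $[\mathcal{L}_h^W\cdot{\bm n}]\,ds$ on interior faces. These do not define bounded functionals on $W_0^{\frac1p-\epsilon,p}(\Omega)$, whose functions have no traces on faces, so $\nabla\cdot\mathcal{L}_h^W\notin W^{\frac1q+\epsilon-1,q}(\Omega)$. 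Consequently:
\begin{itemize}
\item (\ref{model-r:3}) does not apply to $v_1$.
\item An inverse inequality from $W^{\frac1q+\epsilon-1,q}$ to $W^{-1,q}$ cannot help, because the stronger norm is infinite.
\item $\nabla v_1+\mathcal{L}_h^W$ has continuous normal component, so $\partial_{\bm n}v_1$ inherits the jumps of $\mathcal{L}_h^W\cdot{\bm n}$. Hence $\nabla v_1\notin W^{\frac1q+\epsilon,q}(\Omega)$ in general.
\end{itemize}
What you would actually need is the $h$-uniform elementwise bound $h^{\frac1q+\epsilon}|\nabla v_1|_{W_h^{\frac1q+\epsilon,q}(\Omega)}\le C\|\mathcal{L}_h^W\|_{L^q(\Omega)}$. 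That is a transmission-type regularity result across all faces, edges and vertices of $\mathcal{T}_h$, and nothing in Lemma \ref{model-r} provides it.

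The paper's write-up avoids this point by taking the flux representative continuous for HDG-1/HDG-2. In your Step 1, however, $\mathcal{L}_h^W$ is uniquely fixed by testing against all of ${\bm W}_h$ and is discontinuous in general, so that device is not available to you. The same gap recurs in Step 4.

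\emph{A possible repair.} Avoid regularity of the flux-driven part altogether. By linearity, split the discrete triple into two HDG solutions:
\begin{itemize}
\item the one driven by $\mathcal{L}_h^V$ alone, for which your $v_2$-argument works;
\item the one driven by $\mathcal{L}_h^W$ alone, $({\bm w}_h^W,v_h^W,\widehat v_h^W)$, which you bound directly through the discrete Green's functions.
\end{itemize}
For the second part, symmetry of $\mathcal{B}$ and (\ref{rG-HDG2}) give
\[
-({\bm e}_i\cdot{\bm w}_h^W)(z)=(\mathcal{L}_h^W,\boldsymbol{\mathcal R}_{2,h}-\boldsymbol{\mathcal R}_2)-({\bm e}_i\delta_{h,z},\mathcal{L}_h^W+\nabla V),\qquad -\Delta V=\nabla\cdot\mathcal{L}_h^W,\ V|_{\partial\Omega}=0 .
\]
The first term is handled by Lemma \ref{rG01-HDG-L1} and Lemma \ref{fubini}. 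The second needs only the $W^{1,q}$ bound (\ref{model-r:2}) with $s=1$, with no fractional regularity. The analogous identities with $\boldsymbol{\mathcal R}_1,\boldsymbol{\mathcal R}_0,\boldsymbol{\mathcal R}_\ast$ control $\nabla v_h^W$ and the jumps. The remaining parts of $|||\cdot|||_{W^{1,q}_{h'}}$ then follow as in the proofs of Corollaries \ref{W1p-HDG-c} and \ref{W1p-HDG-c1}.

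For $q<2$ the weighted Green's-function bounds are not available. There, bound the flux-driven part by duality from the case already established, using the symmetry of $\mathcal{B}$, as in the second half of the proof of Theorem \ref{W1p-HDG-pre}.
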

\begin{proof}
To prove (\ref{inf-sup-HDG0:1}) for the case $\frac{1}{1-\epsilon}<p\leq2$, let $({\bm w}_{h},v_{h})\in {\bm W}_h\times V_h$ for the hybrid mixed DG method and $({\bm w}_{h},v_{h})\in (\mathcal{C}(\overline{\Omega}))^3\cap{\bm W}_h\times V_h$ for the HDG-1 and HDG-2 methods be the solution of the following problem:
\begin{align}
({\bm w}_{h},\boldsymbol{\psi}_h)-(v_{h},\phi_h)=\mathcal{B}({\bm w}_{1,h},v_{1,h},\widehat{v}_{1,h};\boldsymbol{\psi}_h,\phi_h,\widehat{\phi}_h),\quad \forall
(\boldsymbol{\psi}_h,\phi_h,\widehat{\phi}_h)\in{\bm V}_h.\label{inf-sup-HDG0-proof:1}
\end{align}
Let $({\bm w}_1,v_1)$ be the solution of the following problem associated with ${\bm w}_{h}$ and $v_{h}$:
\begin{align*}
{\bm w}_1+\nabla v_1={\bm w}_{h}\quad {\rm in}~\Omega,\quad \nabla\cdot{\bm w}_1=v_{h}\quad {\rm in}~\Omega,\quad v_1=0\quad {\rm on}~\partial\Omega,
\end{align*}
Then, we know that $({\bm w}_{1,h},v_{1,h},\widehat{v}_{1,h})$ is the HDG approximation of $({\bm w}_1,v_1)$. Since $2\leq q<\frac{1}{\epsilon}$ for the case $\frac{1}{1-\epsilon}<p\leq 2$, (\ref{W1p-HDG-c1:1}) yields
\begin{align}
|||({\bm w}_{1,h},v_{1,h},\widehat{v}_{1,h})|||_{W_{h^{\prime}}^{1,q}(\Omega)}\leq& |||({\bm w}_1-{\bm w}_{1,h},v_1-v_{1,h},v_1-\widehat{v}_{1,h})|||_{W_{h^{\prime}}^{1,q}(\Omega)}\nonumber\\
&+\|{\bm w}_1\|_{L^q(\Omega)}+\|\nabla v_1\|_{L^q(\Omega)}\label{inf-sup-HDG0-proof:2}\\
\leq& C\left(\|\nabla v_1\|_{L^q(\Omega)}+\|{\bm w}_1\|_{L^q(\Omega)}+h^{\frac{1}{q}+\epsilon}|{\bm w}_1|_{W^{\frac{1}{q}+\epsilon,q}(\Omega)}\right).\nonumber
\end{align}
Using the inverse estimates, (\ref{inf-sup-HDG0-proof:1}), (\ref{model-r:3}), and (\ref{model-r:2}) with $s=1$, we have
\begin{align}
&\|\nabla v_1\|_{L^q(\Omega)}+\|{\bm w}_1\|_{L^q(\Omega)}+h^{\frac{1}{q}+\epsilon}|{\bm w}_1|_{W^{\frac{1}{q}+\epsilon,q}(\Omega)}\nonumber\\
\leq& C\Big(\|v_{h}-\nabla\cdot{\bm w}_{h}\|_{W^{-1,q}(\Omega)}+h^{\frac{1}{q}+\epsilon}
\|v_{h}-\nabla\cdot{\bm w}_{h}\|_{W^{\frac{1}{q}+\epsilon-1,q}(\Omega)}\Big)\nonumber\\
\leq&C\|v_{h}-\nabla\cdot{\bm w}_{h}\|_{W^{-1,q}(\Omega)}
= C\sup_{0\neq \phi\in W_0^{1,p}(\Omega)}\frac{({\bm w}_{h},\nabla\phi)+(v_{h},\phi)}{\|\nabla\phi\|_{L^p(\Omega)}}\label{inf-sup-HDG0-proof:3}\\
\leq&C\sup_{0\neq \phi\in W_0^{1,p}(\Omega),\boldsymbol{\psi}=\nabla\phi}\frac{({\bm w}_{h},\mathcal{I}_h^W\boldsymbol{\psi})+(v_{h},\mathcal{I}_h^V\phi)}{\|\nabla\phi\|_{L^p(\Omega)}+\|\boldsymbol{\psi}\|_{L^p(\Omega)}}\nonumber\\
=&C\sup_{0\neq \phi\in W_0^{1,p}(\Omega),\boldsymbol{\psi}=\nabla\phi}\frac{\mathcal{B}({\bm w}_{1,h},v_{1,h},\widehat{v}_{1,h};\mathcal{I}_h^W\boldsymbol{\psi},-\mathcal{I}_h^V\phi,-\mathcal{I}_h^{\widehat{V}}\phi)}{\|\nabla\phi\|_{L^p(\Omega)}+\|\boldsymbol{\psi}\|_{L^p(\Omega)}}.\nonumber
\end{align}
In addition, Lemma \ref{inv}, Lemma \ref{tra} with $\beta=1-\frac{1}{p}$, Lemma \ref{po-IPDG-is}, and Lemma \ref{po-HDG-is} yield
\begin{align*}
&|||(\mathcal{I}_h^W\boldsymbol{\psi},-\mathcal{I}_h^V\phi,-\mathcal{I}_h^{\widehat{V}}\phi)|||_{W_{h^{\prime}}^{1,p}(\Omega)}\\
\leq& C\|\boldsymbol{\psi}\|_{L^p(\Omega)}+\|\nabla\phi\|_{L^p(\Omega)}+C\Big(\|\boldsymbol{\psi}-I_h^W\boldsymbol{\psi}\|_{L^p(\Omega)}+\|\nabla(\mathcal{I}_h^V\phi-\phi)\|_{L_h^p(\Omega)}\\
&+h^{-1}\|\phi-\mathcal{I}_h^V\phi\|_{L^p(\Omega)}+\|\nabla(\mathcal{I}_h^{\rm DG}\phi-\phi)\|_{L_h^p(\Omega)}+h^{-1}\|\phi-\mathcal{I}_h^{\rm DG}\phi\|_{L^p(\Omega)}\Big)\\
\leq &C\left(\|\boldsymbol{\psi}\|_{L^p(\Omega)}+\|\nabla\phi\|_{L^p(\Omega)}\right),
\end{align*}
which, together with (\ref{inf-sup-HDG0-proof:3}), leads to
\begin{align}
&\|\nabla v_1\|_{L^q(\Omega)}+\|{\bm w}_1\|_{L^q(\Omega)}+h^{\frac{1}{q}+\epsilon}|{\bm w}_1|_{W^{\frac{1}{q}+\epsilon,q}(\Omega)}\nonumber\\
\leq& C\sup_{0\neq\phi\in W_0^{1,p}(\Omega),\boldsymbol{\psi}=\nabla\phi}\frac{\mathcal{B}({\bm w}_{1,h},v_{1,h},\widehat{v}_{1,h};
\mathcal{I}_h^W\boldsymbol{\psi},-\mathcal{I}_h^V\phi,-\mathcal{I}_h^{\widehat{V}}\phi)}
{|||(\mathcal{I}_h^W\boldsymbol{\psi},-\mathcal{I}_h^V\phi,-\mathcal{I}_h^{\widehat{V}}\phi)|||_{W_{h^{\prime}}^{1,p}(\Omega)}}\label{inf-sup-HDG0-proof:4}\\
\leq& C\sup_{0\neq({\bm w}_{2,h},v_{2,h},\widehat{v}_{2,h})\in{\bm V}_h}\frac{\mathcal{B}({\bm w}_{1,h},v_{1,h},\widehat{v}_{1,h};
{\bm w}_{2,h},v_{2,h},\widehat{v}_{2,h})}
{|||({\bm w}_{2,h},v_{2,h},\widehat{v}_{2,h})|||_{W_{h^{\prime}}^{1,p}(\Omega)}}.\nonumber
\end{align}
Thus, combining (\ref{inf-sup-HDG0-proof:2}) and (\ref{inf-sup-HDG0-proof:4}), we derive (\ref{inf-sup-HDG0:1}) for the case
$\frac{1}{1-\epsilon}<p\leq 2$.

With the inf-sup stability (\ref{inf-sup-HDG0:1}) for the case $\frac{1}{1-\epsilon}<p\leq 2$ established, we know that Theorem \ref{W1p-HDG-pre} holds for
$\frac{1}{1-\epsilon}<p<2$, and consequently, Corollary \ref{W1p-HDG-c1} holds for $\frac{1}{1-\epsilon}<p<2$. Therefore, using the same argument as above,
we can also establish (with the proof omitted) the inf-sup stability (\ref{inf-sup-HDG0:1}) for the case $2<p<\frac{1}{\epsilon}$. 
\end{proof}

\section{}\label{apendixC}

\subsection{Proof of (\ref{W1p-HDG-pre:2})}\label{apendixC:1}
In this subsection, we briefly explain how to obtain (\ref{W1p-HDG-pre:2}). For $z\in K_z$ with $K_z\in\mathcal{T}_h$, let $(\boldsymbol{\mathcal{R}}_2(z,x),\Gamma(z,x))$ be the regularized Green's function defined in (\ref{rG-mixed2}). Then, similar to (\ref{rG-HDG2:1}), using (\ref{proj_HDG-1}), (\ref{ddd:2}), (\ref{rG-mixed2}),
and (\ref{rG-HDG2}), we have for $2\leq p<\frac{1}{\epsilon}$
\begin{align*}
-({\bm e}_i\cdot(\Pi_W{\bm q}-{\bm q}_h))(z)=&(\boldsymbol{\mathcal{R}}_{2,h}-\boldsymbol{\mathcal{R}}_2,\Pi_W{\bm q}-{\bm q})_{\mathcal{T}_h}+
(\nabla(\Gamma_{2,h}-\Gamma),\Pi_W{\bm q}-{\bm q})_{\mathcal{T}_h}\\
&-\langle\Gamma_{2,h}-\widehat{\Gamma}_{2,h},(\Pi_W{\bm q}-{\bm q})\cdot{\bm n}\rangle_{\partial\mathcal{T}_h}-({\bm e}_i\delta_{h,z},\Pi_W{\bm q}-{\bm q})_{\mathcal{T}_h}\\
\leq&C\Big(\|\sigma_z^{-s}(\boldsymbol{\mathcal{R}}_2-\boldsymbol{\mathcal{R}}_{2,h})\|_{L^q(\Omega)}
+\|\sigma_z^{-s}\nabla(\Gamma-\Gamma_{2,h})\|_{L^q_h(\Omega)}\\
&+\Big(\sum_{K\in\mathcal{T}_h}h_K^{1-q}\|\sigma_z^{-s}(\Gamma_{2,h}-\widehat{\Gamma}_{2,h})\|_{L^q(\partial K)}^q\Big)^{1/q}+h^{-\frac{3}{p}}\Big)\\
&\times\Big(\|\sigma_z^s({\bm q}-\Pi_W{\bm q})\|_{L^p(\Omega)}+\Big(\sum_{K\in\mathcal{T}_h}h_K\|\sigma_z^s({\bm q}-\Pi_W{\bm q})\cdot{\bm n}\|_{L^p(\partial K)}^p\Big)^{1/p}\Big).
\end{align*}
where $s$ is any positive real number, $\sigma$ is a weight function define in (\ref{swf}), $q$ is the conjugate number of $p$ such that $\frac{1}{p}+\frac{1}{q}=1$, and ${\bm e}_i$ denotes the $i$-th standard basis vector of $\mathbb{R}^3$. Using the similar argument presented in the proof of Lemma \ref{rG01-HDG-L1}, we can obtain (with the proof omitted) for $\frac{3}{p}<s<\gamma+\frac{3}{p}$
\begin{align*}
&\|\sigma_z^{-s}(\boldsymbol{\mathcal{R}}_2-\boldsymbol{\mathcal{R}}_{2,h})\|_{L^q(\Omega)}
+\|\sigma_z^{-s}\nabla(\Gamma-\Gamma_{2,h})\|_{L^q_h(\Omega)}\\
&+\Big(\sum_{K\in\mathcal{T}_h}h_K^{1-q}\|\sigma_z^{-s}(\Gamma_{2,h}-\widehat{\Gamma}_{2,h})\|_{L^q(\partial K)}^q\Big)^{1/q}\leq Ch^{-\frac{3}{p}},
\end{align*}
where $\gamma\in(0,1)$ is a constant depending on the domain $\Omega$. Consequently, for $z\in K_z$ with $K_z\in\mathcal{T}_h$ and
$\frac{3}{p}<s<\gamma+\frac{3}{p}$ with $2\leq p<\frac{1}{\epsilon}$, we have by using Lemma \ref{tra} with $\beta=\epsilon$
\begin{align*}
|(\Pi_W{\bm q}-{\bm q}_h)(z)|\leq& Ch^{-\frac{3}{p}}\Big(\|\sigma_z^s({\bm q}-\Pi_W{\bm q})\|_{L^p(\Omega)}+\Big(\sum_{K\in\mathcal{T}_h}h_K\|\sigma_z^s({\bm q}-\Pi_W{\bm q})\cdot{\bm n}\|_{L^p(\partial K)}^p\Big)^{1/p}\Big)\\
\leq&Ch^{-\frac{3}{p}}\Big(\|\sigma_z^s({\bm q}-\Pi_W{\bm q})\|_{L^p(\Omega)}+h^{\frac{1}{p}+\epsilon}|\sigma_z^s({\bm q}-\Pi_W{\bm q})|_{W_h^{\frac{1}{p}+\epsilon,p}(\Omega)}\Big),
\end{align*}
which, together with the triangle inequality and Lemma \ref{fubini}, yields
\begin{align*}
\|{\bm q}-{\bm q}_h\|_{L^p(\Omega)}\leq C\left(\|{\bm q}-\Pi_W{\bm q}\|_{L^p(\Omega)}+h^{\frac{1}{p}+\epsilon}|{\bm q}-\Pi_W{\bm q}|_{W_h^{\frac{1}{p}+\epsilon,p}}\right),\quad 2\leq p<\frac{1}{\epsilon}.
\end{align*}

To prove (\ref{W1p-HDG-pre:2}) for the case $\frac{1}{1-\epsilon}<p<2$, let $(\boldsymbol{\psi}_h,\phi_h,\widehat{\phi}_h)\in{\bm W}_h\times V_h\times \widehat{V}_h$ be the solution of the following problem:
\begin{align}
\mathcal{B}(\boldsymbol{\psi}_h,\phi_h,\widehat{\phi}_h;{\bm w}_h,v_h,\widehat{v}_h)=(|\Pi_W{\bm q}-{\bm q}_h|^{p-2}(\Pi_W{\bm q}-{\bm q}_h),{\bm w}_h)_{\mathcal{T}_h},
\end{align}
for any $({\bm w}_h,v_h,\widehat{v}_h)\in {\bm W}_h\times V_h\times \widehat{V}_h$. Then, Lemma \ref{dfp}, the inf-sup condition (\ref{inf-sup-HDG0:1}), and the H\"{o}lder's inequality yield
\begin{align*}
\|\Pi_W{\bm q}-{\bm q}_h\|_{L^p(\Omega)}^p=&\mathcal{B}(\boldsymbol{\psi}_h,\phi_h,\widehat{\phi}_h;\Pi_W{\bm q}-{\bm q}_h,\Pi_Vu-u_h,\Pi_{\widehat{V}}u-\widehat{u}_h)\\
=&\mathcal{B}(\boldsymbol{\psi}_h,\phi_h,\widehat{\phi}_h;\Pi_W{\bm q}-{\bm q},\Pi_Vu-u,\Pi_{\widehat{V}}u-u),
\end{align*}
and
\begin{align*}
|||(\boldsymbol{\psi}_h,\phi_h,\widehat{\phi}_h)|||_{W_{h^{\prime}}^{1,q}(\Omega)}\leq& C\sup_{0\neq ({\bm w}_h,v_h,\widehat{v}_h)\in{\bm V}_h}\frac{(|\Pi_W{\bm q}-{\bm q}_h|^{p-2}(\Pi_W{\bm q}-{\bm q}_h),{\bm w}_h)_{\mathcal{T}_h}}{|||({\bm w}_h,v_h,\widehat{v}_h)|||_{W_{h^{\prime}}^{1,p}(\Omega)}}\\
\leq& C\|\Pi_W{\bm q}-{\bm q}_h\|_{L^p(\Omega)}^{p-1},
\end{align*}
where the mesh-dependent norm $|||(\cdot,\cdot,\cdot)|||_{W_{h^{\prime}}^{1,p}(\Omega)}$ is defined in Corollary \ref{W1p-HDG-c1} and the space ${\bm V}_h$
is define in Lemma \ref{inf-sup-HDG0}. Therefore, using Lemma \ref{tra} with $\beta=\epsilon$, (\ref{proj_HDG-1}), and the definition of $\mathcal{B}$, we obtain
\begin{align*}
\|\Pi_W{\bm q}-{\bm q}_h\|_{L^p(\Omega)}\leq& C\frac{\mathcal{B}(\boldsymbol{\psi}_h,\phi_h,\widehat{\phi}_h;\Pi_W{\bm q}-{\bm q},\Pi_Vu-u,\Pi_{\widehat{V}}u-u)}{|||(\boldsymbol{\psi}_h,\phi_h,\widehat{\phi}_h)|||_{W_{h^{\prime}}^{1,q}(\Omega)}}\\
=&\frac{(\boldsymbol{\psi}_h+\nabla\phi_h,\Pi_W{\bm q}-{\bm q})_{\mathcal{T}_h}-\langle\phi_h-\widehat{\phi}_h,(\Pi_W{\bm q}-{\bm q})\cdot{\bm n}\rangle
_{\partial\mathcal{T}_h}}{|||(\boldsymbol{\psi}_h,\phi_h,\widehat{\phi}_h)|||_{W_{h^{\prime}}^{1,q}(\Omega)}}\\
\leq& C\left(\|{\bm q}-\Pi_W{\bm q}\|_{L^p(\Omega)}+h^{\frac{1}{p}+\epsilon}|{\bm q}-\Pi_W{\bm q}|_{W_h^{\frac{1}{p}+\epsilon,p}}\right),
\end{align*}
which, together with the triangle inequality, leads to (\ref{W1p-HDG-pre:2}) for the case $\frac{1}{1-\epsilon}<p<2$.

\subsection{Proof of Corollary \ref{W1p-HDG-c}}\label{apendixC:2}
From (\ref{HDG:3}), we derive that on each interior face $F\in\mathcal{E}_h^o$ shared by the elements $K^+$ and $K^-$,
\begin{subequations}\label{W1p-HDG-c-proof:1}
\begin{align}
(\tau^++\tau^-)(\mathcal{L}u_h^+-\widehat{u}_h)=-[{\bm q}_h\cdot{\bm n}]+\tau^-\mathcal{L}[u_h],\\
(\tau^++\tau^-)(\mathcal{L}u_h^--\widehat{u}_h)=-[{\bm q}_h\cdot{\bm n}]-\tau^+\mathcal{L}[u_h],
\end{align}
\end{subequations}
where $v^+$ and $v^-$ denote the restrictions of $v$ on $K^+\cap F$ and $K^-\cap F$ for $v\in\{\tau,u_h\}$. Note that the derivation of the above two identities also can be found in \cite[Subsection 4.3]{lq2026}. Since $\tau=h_K^{-1}$ on each element $K\in\mathcal{T}_h$ for the HDG-2 method, identities (\ref{W1p-HDG-c-proof:1}) yield
\begin{align}
\tau^{p-1}\|\mathcal{L}u_h-\widehat{u}_h\|^p_{L^p(\partial K)}=&\sum_{F\in\partial K}h_K\|\tau(\mathcal{L}u_h-\widehat{u}_h)\|^p_{L^p(F)}\nonumber\\
\leq& C\sum_{F\in\partial K}\left(h\|[{\bm q}_h\cdot{\bm n}]\|_{L^p(F\backslash\mathcal{E}_h^{\partial})}^p+h_F^{1-p}\|[u_h]\|_{L^p(F)}^p\right),\quad \forall K\in\mathcal{T}_h.\label{W1p-HDG-c-proof:2}
\end{align}
For the hybrid mixed DG method, since $\tau=0$, estimate (\ref{W1p-HDG-c-proof:2}) holds trivially. For the HDG-1 method, since
$\tau=h_K^{-1}$ on $F_K^{\ast}$ and $\tau=0$ on $\partial K\backslash F_K^{\ast}$, identities (\ref{W1p-HDG-c-proof:1}) yield
\begin{align}
\tau^{p-1}\|\mathcal{L}u_h-\widehat{u}_h\|^p_{L^p(\partial K)}=&h_K\|\tau(\mathcal{L}u_h-\widehat{u}_h)\|^p_{L^p(F_K^{\ast})}\nonumber\\
\leq&C\left(h\|[{\bm q}_h\cdot{\bm n}]\|_{L^p(F_K^{\ast}\backslash\mathcal{E}_h^{\partial})}^p+h_F^{1-p}\|[u_h]\|_{L^p(F_{K}^{\ast})}^p\right),\quad \forall K\in\mathcal{T}_h.\label{W1p-HDG-c-proof:3}
\end{align}
Then, summing (\ref{W1p-HDG-c-proof:2}) and (\ref{W1p-HDG-c-proof:3}) over the mesh $\mathcal{T}_h$ and using Lemma \ref{inv} and Lemma \ref{tra} with
$\beta=\epsilon$, we obtain
\begin{align*}
\sum_{K\in\mathcal{T}_h}\tau^{p-1}\|\mathcal{L}u_h-\widehat{u}_h\|_{L^p(\partial K)}^p\leq& C\Big(\|{\bm q}-{\bm q}_h\|_{L^p(\Omega)}^p
+h^{1+p\epsilon}|{\bm q}-{\bm q}_h|_{W_h^{\frac{1}{p}+\epsilon,p}(\Omega)}^p\Big)\\
&+C\sum_{F\in\mathcal{E}_h}h_F^{1-p}\|[u_h]\|_{L^p(F)}^p\\
\leq&C\Big(\|{\bm q}-{\bm q}_h\|_{L^p(\Omega)}^p+\|{\bm q}-\Pi_W{\bm q}\|_{L^p(\Omega)}^p\\
&+h^{1+p\epsilon}|{\bm q}-\Pi_W{\bm q}|_{W_h^{\frac{1}{p}+\epsilon,p}(\Omega)}^p\Big)
+C\sum_{F\in\mathcal{E}_h}h_F^{1-p}\|[u_h]\|_{L^p(F)}^p,
\end{align*}
which, together with (\ref{W1p-HDG-pre:1}), obtains the result (\ref{W1p-HDG-c:1}).

\subsection{Proof of Corollary \ref{W1p-HDG-c1}}\label{apendixC:3}
By (\ref{HDG:1}) and the definitions of $\Pi_W$, $\Pi_V$, and $\Pi_{\widehat{V}}$, we have the following error equation:
\begin{align*}
(\Pi_W{\bm q}-{\bm q}_h,{\bm w}_h)_{\mathcal{T}_h}-(\Pi_Vu-u_h,\nabla\cdot{\bm w}_h)_{\mathcal{T}_h}+\langle\Pi_{\widehat{V}}u-\widehat{u}_h,{\bm w}_h\cdot{\bm n}\rangle_{\partial\mathcal{T}_h}=0,\quad \forall{\bm w}_h\in{\bm W}_h.
\end{align*}
Then, applying Lemma \ref{inequality} to the above error equation, we obtain for each element $K\in\mathcal{T}_h$
\begin{align*}
h_K^{-1}\|u_h-\widehat{u}_h\|_{L^2(\partial K)}^2=&h_K^{-1}\langle(\Pi_{\widehat{V}}u-\widehat{u}_h)-(\Pi_Vu-u_h),u_h-\widehat{u}_h\rangle_{\partial K}\\
&+h_K^{-1}\langle\Pi_Vu-\Pi_{\widehat{V}}u ,u_h-\widehat{u}_h\rangle_{\partial K}\\
\leq&Ch_K^{-1/2}\|u_h-\widehat{u}_h\|_{L^2(K)}\Big(\|\Pi_W{\bm q}-{\bm q}_h\|_{L^2(K)}+h_K^{-1/2}\|\Pi_Vu-\Pi_{\widehat{V}}u\|_{L^2(\partial K)}\\
&+\|\sqrt{\tau}(\mathcal{L}(\Pi_Vu-u_h)-(\Pi_{\widehat{V}}u-\widehat{u}_h))\|_{L^2(\partial K)}\Big),
\end{align*}
which, together with Lemma \ref{inv} and Lemma \ref{tra} for $\beta=1-\frac{1}{p}$, leads to
\begin{align}
h_K^{\frac{1}{p}-1}\|u_h-\widehat{u}_h\|_{L^p(\partial K)}\leq& Ch_K^{3(\frac{1}{p}-\frac{1}{2})}h_K^{-\frac{1}{2}}\|u_h-\widehat{u}_h\|_{L^2(\partial K)}\nonumber\\
\leq& Ch_K^{3(\frac{1}{p}-\frac{1}{2})}\Big(\|\Pi_W{\bm q}-{\bm q}_h\|_{L^2(K)}+h_K^{-1/2}\|\Pi_Vu-\Pi_{\widehat{V}}u\|_{L^2(\partial K)}\nonumber\\
&+\|\sqrt{\tau}(\mathcal{L}(\Pi_Vu-u_h)-(\Pi_{\widehat{V}}u-\widehat{u}_h))\|_{L^2(\partial K)}\Big)\nonumber\\
\leq& C\Big(\|\Pi_W{\bm q}-{\bm q}_h\|_{L^p(K)}+h_K^{\frac{1}{p}-1}\|\Pi_Vu-\Pi_{\widehat{V}}u\|_{L^p(\partial K)}\label{W1p-HDG-c1-proof:1}\\
&+\tau^{1-\frac{1}{p}}\|\mathcal{L}(\Pi_Vu-u_h)-(\Pi_{\widehat{V}}u-\widehat{u}_h)\|_{L^p(\partial K)}\Big)\nonumber\\
\leq&C\Big(\|{\bm q}-{\bm q}_h\|_{L^p(K)}+\|{\bm q}-\Pi_W{\bm q}\|_{L^p(K)}+\|\nabla(u-\Pi_Vu)\|_{L^p(K)}\nonumber\\
&+\|\nabla(u-\Pi_ku)\|_{L^p(K)}+\tau^{1-\frac{1}{p}}\|\mathcal{L}u_h-\widehat{u}_h\|_{L^p(\partial K)}\Big).\nonumber
\end{align}
Combining (\ref{W1p-HDG-c:1}) and (\ref{W1p-HDG-c1-proof:1}), we conclude the proof.

{\bf Acknowledgements}  All authors contributed equally. Haitao Leng was supported by Basic and
Applied Basic Research Foundation of Guangdong Province (Grant No. 2024A1515011163, 2025A1515010428).



\end{document}